\documentclass[11pt,reqno]{amsart}
\usepackage[a4paper,scale=0.75,twoside=false]{geometry}  
\overfullrule=10pt

\usepackage[english]{babel}
\usepackage{microtype}
\usepackage{amsfonts}
\usepackage[utf8]{inputenc}	
\usepackage[T1]{fontenc}   
\usepackage{amsmath}
\usepackage{xfrac}
\usepackage{graphicx}
\usepackage[colorlinks=true, allcolors=black]{hyperref}
\usepackage{enumitem}
\usepackage{tikz}
\usepackage{amssymb}
\usepackage{amsthm}
\usepackage{color}
\usepackage{subcaption}
\usepackage{afterpage}
\usepackage{mathtools}
\usepackage{pdfpages}
\usepackage{bm}
\usepackage{appendix}
\usepackage{cleveref}
\usepackage{tikz}
\usepackage{comment}
\usepackage{float}
\usepackage{empheq}
\usepackage[nocompress]{cite}

\crefformat{equation}{(#2#1#3)}
\crefmultiformat{equation}{(#2#1#3)}%
{ and~(#2#1#3)}{, (#2#1#3)}{ and~(#2#1#3)}
\crefrangeformat{equation}{(#3#1#4) to~(#5#2#6)}

\newtheorem{theorem}{Theorem}[section]
\newtheorem{lemma}[theorem]{Lemma}
\newtheorem{proposition}[theorem]{Proposition}
\newtheorem{corollary}[theorem]{Corollary}

\theoremstyle{definition}
\newtheorem{definition}[theorem]{Definition}
\newtheorem{remark}[theorem]{Remark}

\AtBeginEnvironment{remark}{%
\pushQED{\qed}%
}
\AtEndEnvironment{remark}{\popQED\endremark}

\newcommand{\spann}{\mathrm{span}}

\newcommand{\R}{\mathbb{R}}
\newcommand{\N}{\mathbb{N}}
\newcommand{\Z}{\mathbb{Z}}
\newcommand{\T}{\mathbb{T}}
\newcommand{\J}{\mathcal{J}}
\newcommand{\M}{\mathcal{M}}

\title[Asymmetric waves for capillary Whitham equation]{Asymmetric travelling wave solutions of the capillary-gravity Whitham equation}

\author[O. Mæhlen]{Ola Mæhlen}
\email{olamaeh@math.uio.no}
\address{Department of Mathematics, University of Oslo, 0851 Oslo, Norway}

\author[D. S. Seth]{Douglas Svensson Seth}
\email{douglas.s.seth@ntnu.no}
\address{Department of Mathematical Sciences, Norwegian University of Science and Technology, 7491 Trondheim, Norway}

\subjclass[2020]{76B03, 76B45, 35Q35, 37K50, 35A01}
\keywords{Asymmetric waves, Non-symmetric  waves, Capillary Whitham equation, Local bifurcation, Lyapunov–Schmidt reduction}

\begin{document}

\begin{abstract}
By a bifurcation argument we prove that the capillary-gravity Whitham equation features \textit{asymmetrical} periodic travelling wave solutions of arbitrarily small amplitude. Such waves exist only in the weak surface tension regime $0<T<\frac{1}{3}$ and are necessarily bimodal; they are located at double bifurcation points satisfying a certain symmetry breaking condition. Our bifurcation argument is an extension of the one applied by Ehrnström et al. in \cite{Ehrnstroem2019} to find symmetric waves: Here, two additional scalar equations arise. Combining the variational structure of our problem with its translation symmetry, we show that these two additional equations are in fact linearly dependent, and can (at `symmetry breaking' bifurcation points) be solved by incorporating the surface tension as a bifurcation parameter. Contrary to the symmetric case, only very specific modal pairs $(k_1,k_2)$ give rise to (small) asymmetrical periodic waves and we here provide a partial characterization of such pairs.
\end{abstract}
\maketitle
\section{Introduction}
The capillary-gravity Whitham equation, or capillary Whitham for short, is a one-dimensional shallow water wave model given by the equation
\begin{align}\label{eq: main}
    u_t + (M_Tu + u^2)_x=0,
\end{align}
where $u$ is the surface profile dependent on time $t\in\R$ and space $x\in\R$. Here $M_T$ is a spatial Fourier multiplier, characterized by
\begin{align*}
    \widehat{M_Tu}(t,\xi)= m_T(\xi)\hat{u}(t,\xi),
\end{align*}
where we use the Fourier transform $\widehat{u}(t,\xi)=\int_{\R}u(t,x)e^{-i\xi x}dx$ and where the symmetric symbol $m_T$ is given by
\begin{equation*}
 m_T(\xi)= \sqrt{\frac{(1+T\xi^2)\tanh{\xi}}{\xi}},
\end{equation*}
for some strictly positive surface tension parameter $T>0$. The symbol $m_T$ arises as the linearized dispersion relation for capillary-gravity shallow water waves as described by the Euler equations \cite{DLannes}. In the absence of surface tension, $T=0$, the model \cref{eq: main} reduces to the (purely gravitational) Whitham equation introduced in \cite{Whitham1967vma} as an alternative to the more strongly dispersive KdV equation. Over the last two decades the Whitham equation has been proven a particularly rich shallow water wave model: It admits smooth and peaked, periodic and solitary, travelling wave solutions \cite{EHRNSTROM20191603, solitaryWavesWhithamEhrnstormetal, travellingWaveWhithamEhrnstromKalisch, solitaryWavesTruong}, it features wave breaking in finite time \cite{WaveBreakingHur, WaveBreakSautWang}, and it models the Euler equations just as well as the KdV equation \cite{JCWhitham} (in some regimes it is even slightly better \cite{emerald:hal-03103819}). 

In the presence of surface tension, the natural generalisation of this water wave model is the capillary Whitham equation \cref{eq: main} introduced in \cite{MR3177639} whose modelling quality is further investigated in \cite{MR3628376}. The central difference between the two equations is the asymptotic behaviour of the symbol:
\begin{equation*}
m_T(\xi)\sim \begin{cases} \dfrac{1}{\sqrt{|\xi|}}, \hspace{16pt} T=0,\quad\text{(Whitham equation)}\vspace{6pt}\\ \sqrt{|T\xi|}, \quad T>0,\quad \text{(capillary Whitham equation)}\end{cases}
\end{equation*}
as $|\xi|\to\infty$; contrary to the Whitham equation, the capillary Whitham equation features dispersion of \textit{positive} order $\frac{1}{2}$. One implication is that all bounded travelling wave solutions of \cref{eq: main} are smooth (never peaked) which is demonstrated in \cite[Proposition 5.1]{Ehrnstroem2019}. It is expected \cite{JCWhitham} that this positive order dispersion also inhibits wave-breaking, while instead introducing a `bubble blow-up' phenomenon similar to what has been discovered for NLS \cite{MR1203233},  gKdV \cite{MinimalMassBlowupgKdV} and mBO \cite{YvanPilodBlowUp}. Such potential blow-up would rule out a global well-posedness theory for \cref{eq: main}, though said equation has been proved locally well-posed in $C(\R,H^s(\R))$ for $s>\frac{9}{8}$ \cite{MOLINET20181719}.

On the topic of travelling wave solutions of \cref{eq: main}, it is natural to rewrite the equation in its steady form: Inserting the travelling wave ansatz $(x,t)\mapsto u(x-ct)$ into \cref{eq: main} and integrating we obtain the steady capillary Whitham equation 
\begin{equation}\label{eq: steadyWhitham}
   (M_T-c)u+u^2=0,
\end{equation}
where $c>0$ is the wave speed and where we removed the constant of integration using a Galilean transformation argument. This equation features solitary wave solutions, both classical \cite{SolitaryWavesArnesen} and generalized  \cite{MR4057934}, but we will here focus solely on its \textit{periodic} solutions, of which the small and symmetric ones have been fully characterized \cite{Ehrnstroem2019} with properties and stability further examined in \cite{CHARALAMPIDIS2021102793, CarterRozman}. We here complete this work by characterizing the asymmetric ones as well; where we use the following `up to translation' definition of symmetry:
\begin{definition}[Symmetric and asymmetric functions]\label{def:asymmetric} \,\\
    We say that a function $u:\mathbb{R}\to \R$ is symmetric if there exists an $a\in\R$ such that $x\mapsto u(x+a)$ is an even function, and if no such $a$ exists then we say that $u$ is asymmetric.
\end{definition}
\noindent As we shall see, asymmetric periodic solutions of \cref{eq: steadyWhitham} are far more scarce than symmetric ones. Moreover, small periodic travelling wave solutions are necessarily symmetric for strong surface tension $T\geq \frac{1}{3}$ by Corollary \ref{cor: simpleBifurcationPointsGiveRiseToOnlySymmetricSolutions}. 

The scarcity of asymmetric periodic travelling waves is a peculiarity of the full Euler equations as well; in fact, there are several results guaranteeing the symmetry of a wave in specific settings \cite{GarabedianSymmetry, symmetryOfSolitaryWaves,ConstantinSymmetry,MatsSymmetry}. Further complicating the matter, potential branches of asymmetric waves bifurcating from the trivial flow (zero) have been ruled out both in the gravity \cite{JMPA_1934_9_13__217_0} and capillary-gravity setting \cite{NonExistenceOfAsymmetricBifurcationPointsGravityCapillaryOkaSho} for fixed surface tension. This leaves open two options: A branch of asymmetric travelling waves can connect to a branch of symmetric ones (away from zero) at so-called `symmetry-breaking points'. Such points have been found, thus yielding asymmetric travelling water waves, both gravity and capillary-gravity, on finite and infinite depths \cite{zufiria_1987model,zufiria_1987gravity,zufiria_1987capillary,Shimizuetal}. Alternatively, an asymmetric branch may never connect to a symmetric one; loops of asymmetric waves have indeed been found  numerically \cite{gao_wang_vanden-broeck_2017} in the capillary-gravity finite depth setting. Of relevance is also the recent discovery of a large class of (potentially non-periodic) asymmetric waves \cite{Kozlov_2019}. Finally, we mention that the Kawahara equation, which serves as a water-wave model in the regime $(c,T)\sim (1,\frac{1}{3})$ \cite{Buffoni_Groves_1996}, has also been proved to feature asymmetrical periodic travelling wave solutions \cite{Buffoni_Champneys_1996}.

In this paper, we give an analytic proof for the existence of asymmetric periodic solutions of \cref{eq: steadyWhitham} bifurcating from the trivial flow $u=0$.\footnote{Equation \cref{eq: steadyWhitham} admits a second trivial flow $u=c-1$. But this scenario is equivalent to the considered one due to the Galilean symmetry $(u,c)\mapsto (u+c-1, 2-c)$ of the equation.} As demonstrated in \cite{Ehrnstroem2019}, nontrivial periodic solutions $u$ of \cref{eq: steadyWhitham} can be constructed near bifurcation points (Definition \ref{def: simpleAndDoubleBifrucationPoints}) using a solution $v$ of the linearized equation and fixed point arguments. Crucially, Proposition \ref{prop: UIsSymmetricIfAndOnlyIfVIs} tells us that $u$ will be asymmetric if, and only if, $v$ is. Thus, asymmetric solutions are found near bifurcation points where
\begin{enumerate}[label=(\roman*)]
    \item the linearization of \cref{eq: steadyWhitham} admits asymmetric solutions $v$,
    \item the construction of $u$ from $v$ is possible.
\end{enumerate}
The first step is only satisfied at double bifurcation points, and these exist exclusively for weak surface tensions $0<T<\frac{1}{3}$ (see Corollary \ref{cor: simpleBifurcationPointsGiveRiseToOnlySymmetricSolutions}). As for the second step, this turns out to be more challenging than in the symmetric case:
In \cite{Ehrnstroem2019} symmetry of $v$ ensures that, after a Lyapunov--Schmidt reduction, step (ii) consists of solving two finite dimensional equations, and they are dealt with by making appropriate changes to the wave's speed and period. 

When lacking symmetry of $v$, step (ii) reduces instead to solving four equations \cref{eq: decomposingVIntoCosinesAndSines}. A key result here is Corollary \ref{corollary: JOfUIsOrthogonalToVPrime} which exploits the translation symmetry of \cref{eq: steadyWhitham} to establish the linear dependence of two of these equations.\footnote{This trick aligns with the broader theory of \cite{Golubitsky1985} on the Lyapunov--Schmidt reduction argument applied to equations that commute with a compact group of symmetries.} Thus, only three need solving which, under special circumstances, can be done by tweaking the wave's speed, period \textit{and} surface tension. These `special circumstances' refer to the symmetry breaking criterion of Definition \ref{def: symmetryBreakingBifurcationPoints}, a property that only depends on the particular bifurcation point. Thus, at bifurcation points where the criterion is satisfied, one can construct an asymmetric solution $u$ from every choice of (small and) asymmetric $v$ giving then rise to a four-dimensional manifold of asymmetrical solutions (Theorem \ref{thm: 4DimensionalManifoldOfSolutionsFromNondegenerateSymmetryBreakingBifurcationPoint}).

However, the symmetry breaking criterion is rarely satisfied: Contrary to the results of \cite{Ehrnstroem2019}, where symmetric bi-modal waves of any two wave numbers $k_1$ and $k_2$ can be constructed, only some wave number pairs allow for symmetry breaking. We provide a necessary condition for such pairs (Theorem \ref{thm: necessaryConditionOnWaveNumberPairs}) and prove analytically that $(k_1,k_2)=(2,5)$ is the first one (Theorem \ref{thm: 25admitsSymmetryBreaking}). Numerical results suggests that there are an infinitude of wave number pairs giving rise to asymmetric waves (see Figure \ref{fig:k1k2pairs}), but we have not been able to prove this conjecture.  

We conclude the introduction with a coarse summary of our main result which are presented in greater detail in section \ref{sec: mainResults}.

\begin{theorem}[Summary of main result]\label{thm: summaryOfMainResults}
    There exist asymmetric periodic travelling wave solutions to the Whitham equation with arbitrarily small amplitude. These are found only near double bifurcation points that satisfy a symmetry breaking condition. The wave number pair $(k_1,k_2)=(2,5)$ is the smallest one that admits a corresponding `symmetry breaking' bifurcation point.  
\end{theorem}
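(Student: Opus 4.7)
The plan is to assemble Theorem \ref{thm: summaryOfMainResults} as a compilation of the paper's principal technical results rather than arguing it from scratch. The existence statement and the ``only near double bifurcation points'' half both rest on the same bifurcation-theoretic framework, and the optimality of the pair $(2,5)$ is a separate number-theoretic/analytic verification paired with a necessary condition that excludes all smaller pairs.

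First, I would recall the characterization: by Corollary \ref{cor: simpleBifurcationPointsGiveRiseToOnlySymmetricSolutions}, simple bifurcation points produce only symmetric small solutions, and such points are the only ones available when $T\ge\tfrac{1}{3}$. Hence asymmetric small-amplitude waves must come from double bifurcation points, which forces $0<T<\tfrac{1}{3}$. This already yields the ``only'' part of the claim once it is combined with Proposition \ref{prop: UIsSymmetricIfAndOnlyIfVIs} (which transfers symmetry between the linear solution $v$ and the bifurcating solution $u$), so asymmetry of $u$ requires that the two-dimensional kernel admit asymmetric elements, i.e.\ that we sit at a double bifurcation point.

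Next, for existence, I would invoke Theorem \ref{thm: 4DimensionalManifoldOfSolutionsFromNondegenerateSymmetryBreakingBifurcationPoint}: at any double bifurcation point satisfying the symmetry breaking criterion of Definition \ref{def: symmetryBreakingBifurcationPoints}, one obtains a four-dimensional manifold of small solutions, and generic elements of this manifold correspond to asymmetric $v$ and hence, by Proposition \ref{prop: UIsSymmetricIfAndOnlyIfVIs}, to asymmetric $u$. The key ingredient there is the Lyapunov--Schmidt reduction: expanding the kernel element $v$ as a linear combination of the four basis functions $\cos(k_i x)$, $\sin(k_i x)$ for $i=1,2$ produces four scalar equations \cref{eq: decomposingVIntoCosinesAndSines}, and Corollary \ref{corollary: JOfUIsOrthogonalToVPrime}---obtained from the variational structure together with translation invariance---renders two of them linearly dependent. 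The remaining three equations are then solved by treating $(c,\lambda,T)$ (speed, period, surface tension) as bifurcation parameters, which is possible precisely under the symmetry breaking condition. I would cite these results and note that arbitrary smallness of amplitude follows because the manifold emanates from the trivial flow.

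Finally, for the pair $(2,5)$, I would argue in two parts. The necessary condition from Theorem \ref{thm: necessaryConditionOnWaveNumberPairs} rules out every pair $(k_1,k_2)$ with $k_1+k_2<7$ (i.e.\ $(1,2), (1,3), (1,4), (1,5), (2,3), (2,4), (3,4)$ in particular), so no smaller pair can support an asymmetric branch. That $(2,5)$ itself does carry a symmetry breaking bifurcation point is the content of Theorem \ref{thm: 25admitsSymmetryBreaking}. Combining the manifold theorem at this specific point with the two exclusion results then yields all three assertions of Theorem \ref{thm: summaryOfMainResults}. The only nontrivial obstacle in this assembly is that all the auxiliary results are already in place in the paper; the summary proof itself amounts to the short syllogism sketched above, and I would present it as a brief paragraph of cross-references rather than a standalone argument.
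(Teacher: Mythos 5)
Your proposal has the right high-level shape (assemble the summary by cross-referencing the technical results, in three parts: double-bifurcation necessity, existence via the manifold construction, and $(2,5)$-optimality), but it cites the wrong existence theorem and misses one necessity ingredient.

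For existence you invoke Theorem~\ref{thm: 4DimensionalManifoldOfSolutionsFromNondegenerateSymmetryBreakingBifurcationPoint}, but that result is stated for a \emph{nondegenerate} symmetry breaking bifurcation point. Nondegeneracy of the $(2,5)$ point is only indicated numerically (Figure~\ref{fig:25Tplot}); the paper does not prove it. The rigorous route taken in the paper is to apply the more general Theorem~\ref{thm:main}, which requires only the symmetry breaking criterion of Definition~\ref{def: symmetryBreakingBifurcationPoints} (strict monotonicity of $\phi$ near a zero, with no derivative condition), to the point produced by Theorem~\ref{thm: 25admitsSymmetryBreaking}. That yields the existence of arbitrarily small asymmetric solutions without the unverified nondegeneracy hypothesis.

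For the ``only'' part you argue, correctly, that asymmetric small solutions can only appear near double bifurcation points, via Corollary~\ref{cor: simpleBifurcationPointsGiveRiseToOnlySymmetricSolutions} and Proposition~\ref{prop: UIsSymmetricIfAndOnlyIfVIs}. But the theorem claims more: only near double bifurcation points \emph{that satisfy a symmetry breaking condition}. The necessity of that extra condition is exactly Proposition~\ref{prop:phizeronecessary}, which you do not cite: if $\phi(\,\cdot\,;k_1,k_2)$ never vanishes on $(0,\tfrac13)$, the reduced equation $\langle I,v_{\sin}^1\rangle = 0$ forces the symmetric-$v$ constraint $r_1^{k_2-1}r_2^{k_1}\sin\big(k_1 k_2(\theta_1-\theta_2)\big)=0$, so no small asymmetric waves exist near that family of bifurcation points. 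Without that proposition the ``only'' clause is not fully established.

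A minor slip: your list of excluded small pairs labels $(3,4)$ as having $k_1+k_2<7$, but its sum is $7$; it is still excluded since $k_2-k_1=1$ divides $k_1$, so the conclusion stands, only the reasoning is misstated.
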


\section{Setup and main results}\label{sec: SetupAndMainResults}
This section explains our approach to finding asymmetric periodic solutions of \cref{eq: steadyWhitham} and aims to motivate some of the technical calculations in Section \ref{sec: preliminaries}, \ref{sec: LocalBifurcationTheoryToSymmetryBreakingPoints} and \ref{sec: symmetryBreakingWavePairs}.

\subsection{Formulating the problem}
We begin with a proper formulation of the problem:  In seeking periodic solutions of \cref{eq: steadyWhitham} we may restrict our attention to the $2\pi$-setting by instead introducing a scaling parameter in $M_T$: If $v$ is $2P$-periodic and solves \cref{eq: steadyWhitham}, then $u(\cdot)\coloneqq v(\cdot/\kappa )$ with $\kappa = \pi/P$, is $2\pi$-periodic and solves 
\begin{equation}\label{eq: scaledSteadyCapillaryWhithamEquation}
     J(u; c,\kappa, T)\coloneqq (M_{T,\kappa}-c)u+u^2=0,
\end{equation}
where $ M_{T,\kappa}$ is the Fourier multiplier with symbol
	\[ 
	\xi\mapsto m_T(\kappa\xi)=\sqrt{\frac{(1+T\kappa^2\xi^2)\tanh{\kappa\xi}}{\kappa\xi}}.
	\]
We will furthermore consider the value of the surface tension $T>0$ as part of the solution as the coming bifurcation arguments will require us to allow modifications to $T$. In summary, the periodic travelling waves of \cref{eq: main} can all be represented (up to Galilean symmetry) by quadruples   $(u,c,\kappa,T)\in L^\infty(\mathbb{T})\times \R^3_+$ where $\mathbb{T}=\mathbb{R}/2\pi\mathbb{Z}$. Such representations are not unique as the pair $(u,\kappa)$ can be replaced by $(u(n\cdot ),n\kappa)$ for any $n\in\N$ yielding the same solution of \cref{eq: steadyWhitham}; this symmetry plays little importance to us, but justifies the coprime assumption we pose on double bifurcation points in Remark \ref{rem: oneMayAssumeThatK1AndK2AreRelativePrimes}.

In light of Proposition \ref{prop: solutionsAreSmooth} (and Remark \ref{rem: searchingForHighLowRegularitySolutionsAreEquivalentTasks}) there is no harm in restricting our search of solutions to a Zygmund space $\mathcal{C}^s(\T)$ \cite{Taylor_PDEs3} with $s>1$. This choice of smoothness is for technical convenience as both $u'$ and $M_{T,\kappa}u$ are then well defined continuous functions for any $u\in \mathcal{C}^s(\T)$; here we tacitly used that $M_{T,\kappa}$ is a bijection between $\mathcal{C}^s(\T)$ and $\mathcal{C}^{s-\sfrac{1}{2}}(\T)$ (see \cite{Ehrnstroem2019}).

\subsection{Characterizing the bifurcation points}\label{sec: characterizingBifurcationPoints}
Viewing $u\mapsto J(u)$ as an operator from $\mathcal{C}^s(\T)$ to $\mathcal{C}^{s-\sfrac{1}{2}}(\T)$ its Fréchet derivative at $u=0$ is the operator
\begin{align}\label{eq: theLinearizationOperator}
    D_uJ(0;c,\kappa,T) =  M_{T,\kappa}-c.
\end{align}
By Proposition \ref{prop: theLinearizationIsFredholm}, this operator is Fredholm of index zero for every choice of $c,\kappa,T>0$, with an analytic dependence on these parameters. Thus, by a Lyapunov--Schmidt reduction we can construct (nontrivial) solutions from the trivial flow $u=0$ provided that $\ker M_{T,\kappa}-c \neq \{0\}$ which, by the same proposition, is true if, and only if, $c=m_T(\kappa k)$ for some $k\in \N_0$. Whenever this condition is met, we refer to $(0,c,\kappa,T)$, or simply the triplet $(c,\kappa,T)$, as a \textit{bifurcation} point. The case $c=1$ (and $k=0$), will be of no interest to us as it produces no asymmetric (periodic) solutions; see Remark \ref{remark: specialBifurcationForCEqualToOne}. By Lemma \ref{lemma:mTproperties}, this leaves us with two types of bifurcation points:

\begin{definition}[Simple and double bifurcation points]\label{def: simpleAndDoubleBifrucationPoints}
    We say that a bifurcation point $(c_0,\kappa_0,T_0)\in \R_+^3$ is \textit{simple} if there is precisely one $k\in\Z_+$ such that $c_0=m_{T_0}(\kappa_0 k)$ meaning that
      \begin{equation*}
        \ker D_uJ(0;c_0,\kappa_0,T_0) = \mathrm{span}\{\cos(kx),\sin(kx)\},
    \end{equation*}
    and we say that it is \textit{double} if there are exactly two distinct $k_1,k_2\in\Z_+$ such that $c_0=m_{T_0}(\kappa_0 k_1)=m_{T_0}(\kappa_0 k_2)$ meaning that
    \begin{equation*}
        \ker D_uJ(0;c_0,\kappa_0,T_0) = \mathrm{span}\{\cos(k_1x),\cos(k_2x),\sin(k_1x),\sin(k_2x)\}.
    \end{equation*}
\end{definition}

\begin{remark}\label{rem: oneMayAssumeThatK1AndK2AreRelativePrimes}
    When working with a simple bifurcation point $(c_0,\kappa_0,T_0)$, there is no harm in assuming that the corresponding wave number $k$ is equal to one. For if not, then $(c_0,k\kappa_0,T_0)$ would be a simple bifurcation point with said property and, moreover, these two points would yield the exact same solutions of the \textit{unscaled} equation \cref{eq: steadyWhitham}. 
    
   Similarly, when working with a double bifurcation we can, by an analogous scaling argument, safely assume that $k_1$ and $k_2$ are coprime, i.e.~that their greatest common divisor is $1$.
\end{remark}
For strong surface tensions $T\geq \frac{1}{3}$ every bifurcation point is simple (Corollary \ref{cor: allBifurcationPointsAreSimpleInTheStrongSurfaceTensionRegime}). But simple bifurcation points are of no interest to us; they locally only give rise to symmetric solutions (Corollary \ref{cor: simpleBifurcationPointsGiveRiseToOnlySymmetricSolutions}). Consequently, we will focus on weak surface tensions $T\in(0,\frac{1}{3})$ where double bifurcation points are abundant. In fact, for any $T\in (0,\frac{1}{3})$ and any wave number pair $(k_1,k_2)$ there exists unique corresponding bifurcation point $(c,\kappa,T)$; this summed up in Proposition \ref{prop: c0AndKappa0AreDeterminedByK1K2AndT}. 

For the remainder of the section we assume a fixed choice of wave numbers $1\leq k_1<k_2$ satisfying $\gcd(k_1,k_2)=1$ (c.f.~Remark \ref{rem: oneMayAssumeThatK1AndK2AreRelativePrimes}) with some corresponding double bifurcation point $(c_0,\kappa_0,T_0)$. Again, by Proposition \ref{prop: c0AndKappa0AreDeterminedByK1K2AndT}, $c_0,\kappa_0$ are uniquely determined by $k_1,k_2$ and the (seemingly) free parameter $T_0\in (0,\frac{1}{3})$; we shall shortly demonstrate that only certain very specific values of $T_0$ give rise to asymmetric solutions. 

\subsection{Lyapunov--Schmidt reduction}\label{sec: lyapunovSchmidtArgument}

For brevity, we introduce the notation
\begin{equation}\label{eq: theSpaces}
\begin{split}
V\coloneqq \spann\{\cos(k_1x),\cos(k_2x),\sin(k_1x),\sin(k_2x)\},\hspace{10pt}\\
   W\coloneqq V^\perp,\hspace{30pt}   W^s\coloneqq \mathcal{C}^s(\T)\cap V^{\perp}, \hspace{30pt}U^s\coloneqq \mathcal{C}^s(\T),
\end{split}
\end{equation}
that is, $V$ is the kernel of $DJ_u(0) = M_{T_0,\kappa_0}-c_0$ and $W$ is its orthogonal complement in $L^2(\T)$. 
Let further $P_V$ and $P_W$ be the $L^2$-orthogonal projections onto $V$ and $W$ respectively; we can then decompose any element $u\in U^s$ into $v\in V$ and $w\in W^s$ by 
\begin{equation*}
    u = P_Vu + P_Wu \eqqcolon v + w.
\end{equation*}
Decomposing similarly equation \cref{eq: scaledSteadyCapillaryWhithamEquation} we get the (equivalent) the system
\begin{subequations}\label{eq: rewritingTheMainEquationIntoTwoEquations}
    \begin{empheq}[left={\empheqlbrace\,}]{align}
      & P_VJ(u)=(M_{T,\kappa}-c)v + P_V(v+w)^2=0,
        \label{eq: equationForV} \\
      & P_WJ(u)=(M_{T,\kappa}-c)w + P_W(v+w)^2=0,
        \label{eq: equationForW}
    \end{empheq}
\end{subequations}
where we used that the projections $P_V,P_W$ commute with $(M_{T,\kappa}-c)$ as they are all Fourier multipliers. The aim is to obtain solutions of the system \cref{eq: rewritingTheMainEquationIntoTwoEquations}
by fixing $v$ and then modify $c,\kappa,T$ and $w$ appropriately; the resulting solution $u$ will be asymmetric if, and only if, $v$ is (Proposition \ref{prop: UIsSymmetricIfAndOnlyIfVIs}). 

Conveniently, \cref{eq: equationForW} can be uniquely solved for $w$ given any quadruplet $(v,c,\kappa,T)$ sufficiently close to the double bifurcation point $(0,c_0,\kappa_0,T_0)$ (Proposition \ref{prop: infdimAKAuniqueWSolution}). Inserting then for $w$ in \cref{eq: equationForV} the problem reduces to the finite dimensional system \cref{eq: equationForV}, which consists of four scalar equations (as $V$ is four-dimensional). Here a subtle issue arises: We have merely three free parameters $c,\kappa,T$, and so it may seem that \cref{eq: equationForV} is overdetermined. This is surprisingly not the case as the equation is trivially satisfied in the $v'$ direction for any $v,c,\kappa,T$. Indeed, Corollary \ref{corollary: JOfUIsOrthogonalToVPrime} tell us that
\begin{equation}\label{eq: triviallySolvedInVPrimeIntroVersion}
    \langle J(u),v'\rangle=0
\end{equation}
 whenever \cref{eq: equationForW} holds and, thus, we only need to find $c,\kappa,T$ such that \cref{eq: equationForV} holds in the three remaining directions orthogonal to $v'$. Still, solving \cref{eq: equationForV} is not easy, and hinges on the symmetry breaking criterion introduced below.

\subsection{The symmetry breaking criterion}\label{sec: theSymmetryBreakingCondition}
To continue in a meaningful manner, we will need a convenient representation for $v\in V$. By the sum formula for cosine, we can pick $r_1,r_2,\theta_1,\theta_2\in\R$ such that $v$ may be written
\begin{equation}\label{eq: cosineRepresentationOfV(x)}
    v(x)=r_1\cos\Big(k_1(x+\theta_1)\Big)+r_2\cos\Big(k_2(x+\theta_2)\Big).
\end{equation}
Such parameters are not unique, but any choice will do for this discussion. We can then write \cref{eq: equationForV} as a system of the four scalar equations
\begin{subequations}
  \label{eq: decomposingVIntoCosinesAndSines}
    \begin{empheq}{align}
     \Big\langle J(u),\cos\big(k_1(x+\theta_1)\big)\Big\rangle = 0,\quad \Big\langle J(u),\cos\big(k_2(x+\theta_2)\big)\Big\rangle=0,\label{eq: theCosineSpace}\\
    \Big\langle J(u),\sin\big(k_1(x+\theta_1)\big)\Big\rangle = 0,\quad\Big\langle J(u),\sin\big(k_2(x+\theta_2)\big)\Big\rangle=0.
        \label{eq: theSineSpace}
    \end{empheq}
\end{subequations}
The two top equations \cref{eq: theCosineSpace} can be uniquely solved in $c$ and $\kappa$ (Proposition \ref{prop:findimpart1}), while the two lower equations \cref{eq: theSineSpace} are linearly dependent by \cref{eq: triviallySolvedInVPrimeIntroVersion} and so we need only solve one of them. Inserting then for $w,c$ and $\kappa$ in the left-most equation in \cref{eq: theSineSpace} we get
\begin{equation}\label{eq: theDifficultEquation}
     \Big\langle \big(J\big(v(x)+w(x;v,T)\big),\sin\big(k_1(x+\theta_1)\big)\Big\rangle=0.
\end{equation}
where we have emphasized $w$'s implicit dependence on $v$ and $T$. To solve \cref{eq: theDifficultEquation}, in the last free parameter $T$, is the true difficulty of finding asymmetrical solutions. Proposition \ref{proposition:lastfineq} offers the following factorization of the left-hand side so that \cref{eq: theDifficultEquation} can be written
\begin{align}\label{eq: rewritingTheLeftHandSideOfTheDifficultEquation}
    \Big[\phi(T_0; k_1,k_2) + o(1)\Big]\Big[r_1^{k_2}r_2^{k_1}\sin\big(k_1k_2(\theta_1-\theta_2)\big)\Big]=0,
\end{align}
where the small-$o$ term vanishes as $(v,T)\to(0,T_0)$ and where $\phi$ is an analytic function for $T\in(0,\frac{1}{3})$ properly defined in Definition \ref{def:phidef}. By Proposition \ref{proposition: criterionForAsymmetryOfV}, the latter square bracket in \cref{eq: rewritingTheLeftHandSideOfTheDifficultEquation} is zero if, and only if, $v$ is symmetric.\footnote{This is one way of seeing that the two equations \cref{eq: theSineSpace} are trivially satisfied when $v$ is symmetric.} Thus, \cref{eq: rewritingTheLeftHandSideOfTheDifficultEquation} can only hold for asymmetric $v$ if $\phi(T_0;k_1,k_2)=0$ (at least when $(v,T)$ is sufficiently close to $(0,T_0)$). This is in essence the symmetry breaking criterion, which determines the value of $T_0$ in terms of $k_1,k_2$. The below definition asks for a little more so to control the small-$o$ term in \cref{eq: rewritingTheLeftHandSideOfTheDifficultEquation}.

\begin{definition}[Symmetry breaking criterion]\label{def: symmetryBreakingBifurcationPoints}
    Let $(c_0,\kappa_0,T_0)$ be a double bifurcation point and let $k_1,k_2\in \Z_+$ denote the corresponding wave number pair. With $\phi$ as in Definition \ref{def:phidef}, we say that $(c_0,\kappa_0,T_0)$ is a \textit{symmetry breaking bifurcation point} if $\phi(T_0;k_1,k_2)=0$ and $\phi$ is strictly monotone in some neighborhood of $T_0$. If additionally $\partial_T\phi(T_0;k_1,k_2)\neq 0$ we say the point is \textit{nondegenerate} and otherwise \textit{degenerate}.
    
    Similarly, we will say that a wave number pair $(k_1,k_2)$ \textit{admits symmetry breaking} if there exists at least one symmetry breaking bifurcation point for that wave number pair.
\end{definition}
\begin{remark}\label{rem:phiT1T2oppositesign} The analyticity of $T\mapsto \phi(T;k_1,k_2)$ ensures that $(k_1,k_2)$ admits symmetry breaking if, and only if, there are $T_1,T_2\in(0,1/3)$ such that $\phi(T_1;k_1,k_2)<0<\phi(T_2;k_1,k_2)$.
 \end{remark}
\begin{remark}\label{rem: allSymmetryBifurcationPointsAreProbablyNondegenerate}
    While the previous definition is quite general, we actually expect all symmetry breaking bifurcation points to be nondegenerate: The countable family $\{\phi(\cdot,k_1,k_2)\}_{k_1,k_2\in \N}$ consists of somewhat generic analytic functions on $(0,\frac{1}{3})$ and so, heuristically speaking, it is unlikely that one ever encounters the scenario $\phi(T_0;k_1,k_2)=\partial_T\phi(T_0;k_1,k_2)=0$.
   But we lack efficient ways of analysing this intricately defined family, and so we can not rigorously justify the conjecture. 
\end{remark}

\subsection{Main results}\label{sec: mainResults}
We begin by presenting the results of Section \ref{sec: LocalBifurcationTheoryToSymmetryBreakingPoints}, where we rigorously carry out the above described steps and show that symmetry breaking bifurcation points give rise to a structure of both symmetric and asymmetric solutions of \cref{eq: scaledSteadyCapillaryWhithamEquation}; this is Theorem \ref{thm:main}. The topology of this structure can be characterized in the nondegenerate case, which we expect is the typical case (Remark \ref{rem: allSymmetryBifurcationPointsAreProbablyNondegenerate}), and is the one we present here:

\begin{theorem}\label{thm: 4DimensionalManifoldOfSolutionsFromNondegenerateSymmetryBreakingBifurcationPoint}
    Let $(0,c_0,\kappa_0,T_0)\in \mathcal{C}^s(\T)\times \R_+^3$, with $s> 0$, be a nondegenerate symmetry breaking bifurcation point of \cref{eq: scaledSteadyCapillaryWhithamEquation}, and let $k_1,k_2\in \Z_+$ be the corresponding wave numbers. Then, for some $\varepsilon>0$, there is a four-dimensional manifold $\mathcal{M}$ of solutions of \cref{eq: scaledSteadyCapillaryWhithamEquation} in $\mathcal{C}^s(\T)\times \R_+^2\times(0,\frac{1}{3})$,
    \begin{align*}
        \mathcal{M}=\Big\{\big(u(r,\theta),c(r,\theta),\kappa(r,\theta),T(r,\theta)\big) : (r,\theta)\in [0,\varepsilon)^2\times\Theta\Big\},
    \end{align*}
    parameterized over $r=(r_1,r_2)\in [0,\varepsilon)^2$ and $\theta=(\theta_1,\theta_2)\in \Theta \coloneqq \big(\R/\tfrac{2\pi}{k_1}\Z\big)\times \big(\R/\tfrac{2\pi}{k_2}\Z\big)$. All four mappings are analytical in $(r,\theta)$ and of leading orders
    \begin{equation}\label{eq: leadingOrderPartOfTheSolutionsFromTheManifold}
    \begin{split}
          u(x; r,\theta) =&\, r_1\cos\big(k_1(x+\theta_1)\big) +r_2\cos\big(k_2( x+\theta_2)\big)+ \mathcal{O}(r^2),\\[6pt]
          c(r,\theta)=c_0 +&\, \mathcal{O}(r^2),\quad \kappa(r,\theta)=\kappa_0 + \mathcal{O}(r^2),\quad T(r,\theta)=T_0 + \mathcal{O}(r^2).
    \end{split}
    \end{equation}
    The function $x\mapsto u(x;r,\theta)$ is asymmetric if, and only if, $r_1r_2\neq 0$ and $\theta_1-\theta_2\notin \frac{\pi}{k_1k_2}\Z$. Finally, $\mathcal{M}$ includes all asymmetric solutions of \cref{eq: scaledSteadyCapillaryWhithamEquation} that are sufficiently close to $(0,c_0,\kappa_0,T_0)$. 
\end{theorem}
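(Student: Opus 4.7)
The plan is to carry out rigorously the Lyapunov--Schmidt reduction outlined in Section~\ref{sec: lyapunovSchmidtArgument}: solve the infinite-dimensional equation \cref{eq: equationForW} for $w$, solve the cosine equations \cref{eq: theCosineSpace} for $(c,\kappa)$, eliminate one sine equation by the translation-symmetry identity \cref{eq: triviallySolvedInVPrimeIntroVersion}, and then solve the remaining scalar equation for $T$ using the nondegeneracy hypothesis. Each of these four steps will be an application of the analytic implicit function theorem, and the nondegeneracy condition $\partial_T\phi(T_0;k_1,k_2)\neq 0$ is precisely what makes the final IFT step possible.

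Concretely, I would first invoke Proposition~\ref{prop: infdimAKAuniqueWSolution} to produce a unique analytic map $w=w(v,c,\kappa,T)\in W^s$ solving \cref{eq: equationForW} in a neighborhood of $(0,c_0,\kappa_0,T_0)$, with $w$ vanishing to second order in $v$. Using the representation \cref{eq: cosineRepresentationOfV(x)}, every small $v\in V$ corresponds to a unique $(r,\theta)\in[0,\varepsilon)^2\times\Theta$ (restricting $r_j\geq 0$ pins $\theta_j$ modulo $2\pi/k_j$). Substituting $w$ into \cref{eq: equationForV} reduces the problem to the four scalar equations \cref{eq: decomposingVIntoCosinesAndSines} in the three parameters $c,\kappa,T$.

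Next, Proposition~\ref{prop:findimpart1} gives the IFT solution of the two cosine equations \cref{eq: theCosineSpace}, producing analytic $c=c(r,\theta,T)$ and $\kappa=\kappa(r,\theta,T)$ with $c(0,\theta,T_0)=c_0$, $\kappa(0,\theta,T_0)=\kappa_0$. Corollary~\ref{corollary: JOfUIsOrthogonalToVPrime} renders the two sine equations \cref{eq: theSineSpace} linearly dependent, so only one remains; by Proposition~\ref{proposition:lastfineq} its left-hand side factors as $F(r,\theta,T)\cdot G(r,\theta)$ with $G(r,\theta)=r_1^{k_2}r_2^{k_1}\sin(k_1k_2(\theta_1-\theta_2))$ and $F(0,\theta,T)=\phi(T;k_1,k_2)$. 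The nondegenerate symmetry breaking hypothesis gives $F(0,\theta,T_0)=0$ and $\partial_T F(0,\theta,T_0)=\partial_T\phi(T_0;k_1,k_2)\neq 0$, both uniformly in $\theta\in\Theta$. The analytic IFT then yields a unique analytic solution $T=T(r,\theta)$ with $T(0,\theta)=T_0$. Substituting back produces the four mappings in \cref{eq: leadingOrderPartOfTheSolutionsFromTheManifold}; the $\mathcal{O}(r^2)$ remainders follow because all corrections to $c,\kappa,T$ are driven by the quadratic nonlinearity $u^2$ and $w$ itself is quadratic in $v$.

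The asymmetry characterization follows by combining Proposition~\ref{prop: UIsSymmetricIfAndOnlyIfVIs} with Proposition~\ref{proposition: criterionForAsymmetryOfV}, which together equate asymmetry of $u$ with $G(r,\theta)\neq 0$. For the completeness claim, any nearby asymmetric solution decomposes uniquely as $u=v+w$ with $v\in V$, determining $(r,\theta)$; the uniqueness in Propositions~\ref{prop: infdimAKAuniqueWSolution} and \ref{prop:findimpart1} forces the associated $w,c,\kappa$ to agree with those constructed, and since $G(r,\theta)\neq 0$ the remaining sine equation reduces to $F(r,\theta,T)=0$, whose unique IFT solution is $T=T(r,\theta)$. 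I expect the main obstacle in the overall argument to sit outside this proof proper, in Proposition~\ref{proposition:lastfineq}: the delicate combinatorial and Neumann-series expansion of $w$ needed to isolate the leading factor $\phi(T_0;k_1,k_2)$ of the remaining sine equation. Granted that factorization, the proof of this theorem is a disciplined but essentially routine chain of IFT applications, with uniformity in $\theta\in\Theta$ coming for free from compactness of the torus.
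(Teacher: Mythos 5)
Your proposal is correct and follows essentially the same route as the paper: Lyapunov--Schmidt reduction via Proposition~\ref{prop: infdimAKAuniqueWSolution}, the factorizations of Propositions~\ref{proposition:Psifactorization} and~\ref{proposition:lastfineq} together with Corollary~\ref{corollary: JOfUIsOrthogonalToVPrime}, then the sequential implicit-function-theorem steps of Proposition~\ref{prop:findimpart1} and Lemma~\ref{lemma:asymmetricsol} (the nondegenerate branch), with the asymmetry characterization and completeness handled exactly as in Propositions~\ref{prop: UIsSymmetricIfAndOnlyIfVIs}, \ref{proposition: criterionForAsymmetryOfV} and Remark~\ref{remark: aDescriptionOfAllSymmetricSolutions}. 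The only cosmetic difference is that in the nondegenerate case the paper also observes one can apply a single analytic IFT to $(\Psi_1,\Psi_2,\Psi_3)$ in $(c,\kappa,T)$, whose Jacobian determinant is $\partial_T\phi(T_0;k_1,k_2)\neq 0$, instead of your two-stage solve.
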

\begin{remark}
    We make three remarks regarding the manifold $\mathcal{M}$:
    \begin{enumerate}
        \item By a more detailed argument than the one presented in \Cref{sec: LocalBifurcationTheoryToSymmetryBreakingPoints}, one can show that $\mathcal{M}$ is homeomorphic to $\R^4$ (as opposed to $[0,\varepsilon)^2\times\Theta$); in fact, any point on $\mathcal{M}$ is uniquely determined by the leading order part of $u(x;r,\theta)$ in \eqref{eq: leadingOrderPartOfTheSolutionsFromTheManifold}.
        \item The symmetric elements of $\M$ constitutes a three-dimensional closed structure dividing $\M$ into two components (each homeomorphic to $\R^3\times\T$) of asymmetric elements. See Remark \ref{rem: describingTheTopologyOfV} for more details. 
        \item $\mathcal{M}$ does not include all \textit{symmetric} solutions of \eqref{eq: scaledSteadyCapillaryWhithamEquation} in the vicinity of $(0,c_0,\kappa_0,T_0)$. Three additional four-dimensional manifolds of symmetric solutions can be constructed. See \Cref{remark: aDescriptionOfAllSymmetricSolutions} for further details. \qedhere
    \end{enumerate}
\end{remark}
Our second set of findings pertains to Section \ref{sec: symmetryBreakingWavePairs} where we investigate which wave number pairs $(k_1,k_2)$ that allow for symmetry breaking (c.f.~Definition \ref{def: symmetryBreakingBifurcationPoints}). A full characterization of such pairs seems out of reach due to their dependence on the implicit functions $\{\phi(\cdot,k_1,k_2)\}_{k_1,k_2\in\N}$ from Definition \ref{def:phidef}. Still, we are able to give the necessary condition that $k_1\nmid k_2$ and $(k_2-k_1)\nmid k_1$, which is proved by Proposition \ref{prop:NoSymmetryBreaking} and summed up by the following theorem:
\begin{theorem}\label{thm: necessaryConditionOnWaveNumberPairs}
    Let $1\leq k_1<k_2$ be integers. Then, there exists no symmetry breaking bifurcation point with corresponding wave number pair $(k_1,k_2)$ if either $k_1$ divides $k_2$ or $k_2-k_1$ divides $k_1$.
\end{theorem}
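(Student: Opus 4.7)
The strategy is to show that in each of the excluded cases $T\mapsto \phi(T;k_1,k_2)$ does not change sign on $(0,\tfrac{1}{3})$; by Remark \ref{rem:phiT1T2oppositesign} this precludes the existence of any symmetry breaking bifurcation point with wave number pair $(k_1,k_2)$. Thus the theorem follows once the sign of $\phi$ is pinned down in both regimes.

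First I would reduce to the coprime setting via Remark \ref{rem: oneMayAssumeThatK1AndK2AreRelativePrimes}. Under $\gcd(k_1,k_2)=1$ the hypothesis $k_1\mid k_2$ forces $k_1=1$, and the hypothesis $(k_2-k_1)\mid k_1$ forces $k_2-k_1=1$. Hence it suffices to treat the two families $(k_1,k_2)=(1,n)$ with $n\geq 2$ and $(k_1,k_2)=(n,n+1)$ with $n\geq 1$.

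Next I would unpack the defining formula of $\phi$ from Definition \ref{def:phidef}. Since $\phi$ is the coefficient of $r_1^{k_2}r_2^{k_1}\sin\bigl(k_1k_2(\theta_1-\theta_2)\bigr)$ in the small-amplitude expansion of the reduced equation \cref{eq: theDifficultEquation}, it can be computed explicitly by iterating the implicit-function expansion of $w$ determined by $w=-(M_{T,\kappa_0}-c_0)^{-1}P_W(v+w)^2$, starting from the ansatz $v=r_1\cos\bigl(k_1(\cdot+\theta_1)\bigr)+r_2\cos\bigl(k_2(\cdot+\theta_2)\bigr)$, and extracting the resonant polynomial coefficient. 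The outcome is a sum over chains of Fourier interactions, each contribution being a product of combinatorial weights and inverse denominators $\bigl(c_0-m_{T_0}(\kappa_0 j)\bigr)^{-1}$ at intermediate wave numbers $j\notin\{k_1,k_2\}$ that must be visited to reach the resonant monomial.

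The core of the proof is then a sign analysis of this sum in each case. For $(1,n)$, producing $r_1^n r_2$ from the quadratic nonlinearity forces every contributing chain to pass through the intermediate wave numbers $2,3,\ldots,n-1$, generated iteratively from the fundamental mode $k_1=1$. For $(n,n+1)$, the relevant chains instead pass through the low wave numbers $1,2,\ldots,n-1$, all accessible through the difference interaction at $k_2-k_1=1$. Coprimality guarantees that no intermediate wave number coincides with $k_1$ or $k_2$, so the inverse denominators are finite throughout $(0,\tfrac{1}{3})$; using the monotonicity/convexity properties of $m_T$ from Lemma \ref{lemma:mTproperties} together with the identities $c_0=m_{T_0}(\kappa_0 k_1)=m_{T_0}(\kappa_0 k_2)$, I would verify that every such denominator has a fixed sign on $(0,\tfrac{1}{3})$ and that the combinatorial weights are all of the same sign, so no cancellation can occur.

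The main obstacle is this bookkeeping: exhaustively identifying every Fourier chain contributing to the coefficient of $r_1^{k_2}r_2^{k_1}$ and checking that their signs align. In particular, one must argue that, despite the apparent multiplicity of admissible chains, no coincidental cancellation produces a sign change of $\phi$ on $(0,\tfrac{1}{3})$. The two structural hypotheses in the theorem are precisely those for which the chain structure is rigid enough (forced through a sequence of uniquely signed intermediate modes) to rule out such cancellations, which is presumably why these are the only cases amenable to a general argument and why a full characterisation of symmetry-breaking pairs eludes the authors.
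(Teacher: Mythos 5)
Your proposal is correct and follows essentially the same approach as the paper: after reducing to the coprime cases $k_1=1$ and $k_2-k_1=1$ (via \Cref{rem: oneMayAssumeThatK1AndK2AreRelativePrimes}), the paper expands $\phi(T;k_1,k_2)$ as a positive-weighted sum of products of factors $\ell(k_{i,j})$ at intermediate wave numbers distinct from $k_1,k_2$ (\Cref{lemma:phi_explicit}) and deduces a fixed sign of $\phi$ on $(0,\tfrac13)$ exactly as you outline (\Cref{prop:NoSymmetryBreaking}). One small imprecision in your sketch: for $(n,n+1)$ the contributing chains also pass through modes strictly above $k_2$, not only those below $k_1$, but since $\ell$ is negative in both regions each term has sign $(-1)^M$ and the no-cancellation conclusion is unchanged.
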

When searching for wave number pairs that allow for symmetry breaking, one finds that $(2,5)$ arises as the smallest candidate not excluded by the previous theorem. This pair is small enough so that $\phi(T;2,5)$ can be computed semi-explicitly as done in Proposition \ref{prop:intersection25}. And indeed, we find that this pair is the smallest that admits symmetry breaking:
\begin{theorem}\label{thm: 25admitsSymmetryBreaking}
    The wave number pair $(2,5)$ admits symmetry breaking in accordance with Definition \ref{def: symmetryBreakingBifurcationPoints}. 
\end{theorem}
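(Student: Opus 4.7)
The plan is to invoke Remark \ref{rem:phiT1T2oppositesign}, which (together with the analyticity of $T\mapsto\phi(T;k_1,k_2)$) reduces the task to exhibiting two values $T_1,T_2\in(0,\tfrac{1}{3})$ at which $\phi(T;2,5)$ takes opposite signs. Indeed, once $\phi$ changes sign, the intermediate value theorem produces a zero $T_0\in(T_1,T_2)$; since $\phi(\cdot;2,5)$ is analytic and not identically zero, it factors as $(T-T_0)^n g(T)$ with $g(T_0)\neq 0$ and $n$ odd at any sign-changing zero, so $\phi$ is automatically strictly monotone in a neighborhood of $T_0$, verifying Definition \ref{def: symmetryBreakingBifurcationPoints}.

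The next step is to write $\phi(T;2,5)$ in a form amenable to evaluation. Here I would appeal directly to Proposition \ref{prop:intersection25}, whose ``semi-explicit'' formula should express $\phi(T;2,5)$ as a finite sum of rational expressions in $m_T(\kappa_0(T)\,j)$ for small integers $j$, where $\kappa_0(T)$ is determined implicitly by the double bifurcation equation $m_T(2\kappa_0)=m_T(5\kappa_0)$. Because the pair $(2,5)$ is small, only a handful of modes (typically $j\in\{3,4,7,10\}$, i.e.~$k_2\pm k_1$ and $2k_1,2k_2$) contribute to the sum, so the expression is fully explicit once one knows $\kappa_0(T)$.

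The evaluation strategy would be to pick $T_1$ and $T_2$ near opposite ends of the interval $(0,\tfrac{1}{3})$. One natural candidate is to analyse the asymptotic behaviour of $\phi(T;2,5)$ as $T\to 0^+$: there $\kappa_0(T)$ converges to the unique $\kappa_\star>0$ satisfying $m_0(2\kappa_\star)=m_0(5\kappa_\star)$, and $\phi$ tends to a computable limit whose sign can be checked by hand from $m_0(\xi)=\sqrt{\tanh\xi/\xi}$. The opposite sign would be extracted either from the limit $T\to\tfrac{1}{3}^-$ (where $\kappa_0(T)\to 0$ and the symbol admits a tractable expansion) or at a convenient interior point $T$ at which $\kappa_0(T)$ can be solved in closed form or bounded rigorously in a small interval. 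Since $\phi$ is continuous in $T$, a definite limit sign propagates to a nonempty open sub-interval of $(0,\tfrac{1}{3})$.

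The main obstacle is the rigor of the sign computation: $\phi(T;2,5)$ depends on $\kappa_0(T)$ only implicitly, so an honest proof either requires an analytic argument in a limiting regime (avoiding numerical evaluation of $\kappa_0$) or a rigorous interval-arithmetic bound on $\kappa_0(T)$ and the subsequent sum of square-root-of-$\tanh$ terms. I would lean on the asymptotic route wherever possible, because there the denominators $m_T(\kappa_0 j)-c_0$ can be expanded and signs tracked symbolically; failing that, a single rational $T$-value with a tight enclosure of $\kappa_0$ should suffice, given that the finite sum contains only a few elementary transcendental terms. Once both signs are established, the concluding appeal to Remark \ref{rem:phiT1T2oppositesign} delivers the theorem.
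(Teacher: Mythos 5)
Your overall strategy coincides with the paper's: reduce the theorem via Remark \ref{rem:phiT1T2oppositesign} to exhibiting a sign change of $T\mapsto\phi(T;2,5)$ on $(0,\tfrac13)$, and extract the two signs from the limiting regimes $T\downarrow 0$ and $T\uparrow\tfrac13$. However, there are two genuine gaps. First, your description of the $T\downarrow 0$ regime is wrong: you claim $\kappa_0(T)$ converges to a finite $\kappa_\star$ solving $m_0(2\kappa_\star)=m_0(5\kappa_\star)$, but no such $\kappa_\star$ exists, since $m_0$ is strictly decreasing (Lemma \ref{lemma:mTproperties}); by Proposition \ref{prop: c0AndKappa0AreDeterminedByK1K2AndT} one has $\kappa_0\sim (k_1k_2T)^{-1/2}\to\infty$ and $c_0\sim T^{1/4}\to 0$. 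Consequently the ``computable limit of $\phi$ from $m_0(\xi)=\sqrt{\tanh\xi/\xi}$'' you propose does not exist in that form; moreover each factor $\ell(n)=(c_0-m_T(\kappa_0 n))^{-1}$ degenerates in both limits, so $\phi$ itself diverges. The paper handles this by dividing by the positive normalizer $(\ell(6))^{4}$ and using Lemma \ref{lemma:l-limits} (which encodes exactly the asymptotics of $c_0,\kappa_0$ from Proposition \ref{prop: c0AndKappa0AreDeterminedByK1K2AndT}) to evaluate the limits of the ratios $\ell(n_1)/\ell(n_2)$, obtaining opposite signs in \cref{eq:hatu2limits}.

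Second, your proposal defers the actual formula for $\phi(T;2,5)$ to Proposition \ref{prop:intersection25}, but that proposition \emph{is} the paper's proof of this theorem, so this is circular as a standalone argument: the computational heart of the proof is precisely to evaluate $\phi(T;2,5)=\widehat{u^2}_{(4,0),(0,2)}(c_0,\kappa_0,T)$ via the recursion \cref{eq:urecursive2,eq:u2recursive} of Lemma \ref{lemma: Taylor-FourierExpansion}, which yields an explicit thirteen-term sum of products of $\ell(1),\ell(3),\ell(4),\ell(6),\ell(8),\ell(10)$ (note your guessed mode set $\{3,4,7,10\}$ is not the right one). Your reduction to a sign change and your observation that an odd-order zero of an analytic function is automatically a point of local strict monotonicity are fine and match Remark \ref{rem:phiT1T2oppositesign}, but without the explicit expansion and the corrected asymptotics the sign computation cannot be carried out.
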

The theorem is proved by Proposition \ref{prop:intersection25} whose proof does not easily generalize to larger pairs due to the impracticality of manually computing $\phi(T; k_1,k_2)$ (see Remark \ref{rem: ExactExpressionForNAndM}). Still, numerical evidence indicates that there are plenty more pairs admitting symmetry breaking as seen in Figure \ref{fig:k1k2pairs}.

We end the section by explaining how Theorem \ref{thm: summaryOfMainResults} follows:
Applying the more general Theorem \ref{thm:main} to the implied symmetry breaking bifurcation point of Theorem \ref{thm: 25admitsSymmetryBreaking}, we get asymmetric travelling wave solutions for the capillary Whitham equation. In fact, Figure \ref{fig:25Tplot}, which plots $T\mapsto \phi(T;2,5)$ on $(0,\frac{1}{3})$, indicates that this bifurcation point is unique and, more importantly, nondegenerate so that Theorem \ref{thm: 4DimensionalManifoldOfSolutionsFromNondegenerateSymmetryBreakingBifurcationPoint} applies. The last part of the theorem is covered by \Cref{prop:phizeronecessary}.

\section{Preliminaries}\label{sec: preliminaries}
This section collects some auxiliary results and observations on which the arguments in Section \ref{sec: LocalBifurcationTheoryToSymmetryBreakingPoints} hinges on.

\subsection{Zygmund spaces}
We here give a definition of Zygmund spaces on the circle. These spaces are natural for our setting ($M_T$ is an isomorphism between $\mathcal{C}^{s+\frac{1}{2}}(\T)$ and $\mathcal{C}^{s}(\T)$ for all $s$) though their exact structure will matter little to us; we refer the reader to \cite[Section 13.8]{Taylor_PDEs3} for a thorough introduction to this subject. 
Let $\varphi$ be defined as the Fourier inverse of a symmetric real function $\hat{\varphi}\in C_c^\infty(\R)$ which satisfies $\hat{\varphi}(\xi)=1$ for $|\xi|\leq 1$ and $\hat{\varphi}(\xi)=0$ for $|\xi|\geq 2$. We then set $\phi_0\coloneqq \varphi$ and $\phi_j(x)\coloneqq 2^j\varphi(2^jx) - 2^{j-1}\varphi(2^{j-1}x)$ for $j\in \N$ and define $\mathcal{C}^{s}(\T)$ as the set of real-valued distributions $u$ on $\T=\R/2\pi\Z$ such that
\begin{align*}
    \|u\|_{\mathcal{C}^{s}(\T)}\coloneqq \sup_{j} 2^{js}\|u\ast \phi_j\|_{L^\infty(\T)}<\infty.
\end{align*}
Up to a re-scaling of the norm, the space $\mathcal{C}^{s}(\T)$ is a Banach algebra for each $s\geq 0$ and for all $s'>s\geq 0$ the space $\mathcal{C}^{s'}(\T)$ embeds compactly in $\mathcal{C}^s(\T)$. 
Moreover, the Hölder space $C^s(\T)$ naturally embeds in $\mathcal{C}^s(\T)$ for all $s\geq 0$, while the converse is true only for non-integer $s$.

In particular, every element of $\mathcal{C}^s(\T)$ is a continuous function when $s>0$.

\subsection{Smoothing effects and Fredholm property of the linear part}
The following result is a slightly stronger variant of \cite[Proposition 5.1]{Ehrnstroem2019}. 

\begin{proposition}[Solutions are smooth]\label{prop: solutionsAreSmooth}
    Let $u\in L^\infty(\T)$ and $c,\kappa,T>0$. If $(u,c,\kappa,T)$ solves \cref{eq: scaledSteadyCapillaryWhithamEquation} in distributional sense, then $u$ is smooth. In particular, for any sufficiently large bound $B>0$ we can find a corresponding constant $C_B$ such that 
    \begin{equation}\label{eq ZygmundNormBoundedByHeight}
        \|u\|_{\mathcal{C}^{s}(\T)}\leq C_B \|u\|_{L^{\infty}(\T)},
    \end{equation}
whenever $\max\{s,\|u\|_{L^{\infty}(\T)},c,\kappa^{-1},T^{-1}\}< B$.
\end{proposition}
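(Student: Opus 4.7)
The argument is a bootstrap that exploits the $\tfrac{1}{2}$-order smoothing of $M_{T,\kappa}^{-1}$ together with the fact that $\mathcal{C}^s(\T)$ is a Banach algebra for every $s>0$. I would proceed in three steps.

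First, I would verify that the reciprocal symbol $\xi\mapsto 1/m_T(\kappa\xi)$ is a symbol of order $-\tfrac{1}{2}$ on $\T$ with bounds that are uniform in the parameters as long as $\max\{c,\kappa^{-1},T^{-1}\}<B$. Since $m_T$ is strictly positive, bounded away from zero on compact sets, and satisfies $m_T(\kappa\xi)\asymp\sqrt{T\kappa|\xi|}$ for large $|\xi|$, the inverse behaves like $(T\kappa|\xi|)^{-1/2}$ at infinity and can be differentiated term-by-term with the same gain. This yields a bounded linear operator $M_{T,\kappa}^{-1}\colon\mathcal{C}^s(\T)\to\mathcal{C}^{s+1/2}(\T)$ for every $s\geq 0$, whose operator norm depends only on $B$.

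Next, I would rewrite the equation as the fixed-point identity
\begin{equation*}
u \;=\; M_{T,\kappa}^{-1}\bigl(cu - u^2\bigr),
\end{equation*}
and bootstrap. Since $L^\infty(\T)\hookrightarrow\mathcal{C}^0(\T)$ and $u^2\in L^\infty(\T)\subset\mathcal{C}^0(\T)$, one application of $M_{T,\kappa}^{-1}$ places $u$ in $\mathcal{C}^{1/2}(\T)$. As soon as $u\in\mathcal{C}^{\sigma}(\T)$ with $\sigma>0$, the algebra property gives $u^2\in\mathcal{C}^{\sigma}(\T)$, whence $cu-u^2\in\mathcal{C}^\sigma(\T)$ and $u\in\mathcal{C}^{\sigma+1/2}(\T)$. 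Iterating lifts $u$ to $\mathcal{C}^\infty(\T)$, which reproduces the qualitative content of \cite[Proposition~5.1]{Ehrnstroem2019}.

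Finally, to upgrade the argument to the quantitative bound \cref{eq ZygmundNormBoundedByHeight}, I would track constants through the bootstrap. Writing $C_B$ for a generic constant depending only on $B$, the first application of $M_{T,\kappa}^{-1}$ gives $\|u\|_{\mathcal{C}^{1/2}}\leq C_B(c+\|u\|_{L^\infty})\|u\|_{L^\infty}\leq C_B\|u\|_{L^\infty}$, after which each iteration contributes only a polynomial factor in the previous Zygmund norm. An induction on $n$ yields $\|u\|_{\mathcal{C}^{n/2}(\T)}\leq C_{B,n}\|u\|_{L^\infty(\T)}$, and the bound for arbitrary $s<B$ follows from the embedding $\mathcal{C}^{n/2}(\T)\hookrightarrow\mathcal{C}^{s}(\T)$ with $n$ chosen so that $n/2\geq s$.

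The one technical point that requires genuine care is Step~1: isolating \emph{uniform} symbol estimates for $1/m_T(\kappa\cdot)$ in terms of~$B$. The asymptotics at infinity and the nondegeneracy of $\tanh(\kappa\xi)/(\kappa\xi)$ near the origin make this routine but slightly delicate, since one has to keep track of how $T$ and $\kappa$ enter the derivative bounds; everything else in the argument is essentially mechanical.
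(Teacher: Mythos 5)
Your proposal is correct and is essentially the argument the paper has in mind: the paper's proof simply defers to the bootstrap of \cite[Proposition 5.1]{Ehrnstroem2019}, which is exactly the scheme you carry out — rewrite the equation as $u=M_{T,\kappa}^{-1}(cu-u^2)$, use the uniform order-$\tfrac12$ smoothing of $M_{T,\kappa}^{-1}$ together with the algebra property of $\mathcal{C}^\sigma(\T)$ for $\sigma>0$, and track constants (absorbing $c,\|u\|_{L^\infty}<B$) to get the quantitative bound. Your write-up just makes explicit what the paper cites, including the uniform symbol estimates that the reference leaves implicit.
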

\begin{proof}
    We only sketch the steps as this result will not matter to our analysis, and is rather meant to justify our smoothness assumption of $u$. The first part of the proposition is \cite[Proposition 5.1]{Ehrnstroem2019}, and by carrying out the bootstrap argument in the proof of said proposition, one also gets the stated inequality. 
\end{proof}
\begin{remark}\label{rem: searchingForHighLowRegularitySolutionsAreEquivalentTasks}
      Inequality \cref{eq ZygmundNormBoundedByHeight} guarantees that solutions found sufficiently close to zero in $L^\infty(\T)$ are in-fact close to zero in $\mathcal{C}^{s}(\T)$ as well, so that we can safely work in a high-regularity space without potentially overlooking any solutions `near' zero.
\end{remark}

The following result is essential to the Lyapunov--Schmidt reduction argument.
\begin{proposition}[Fredholm property of the linearized equation]\label{prop: theLinearizationIsFredholm}
    Let $s>1$, and  $T,c,\kappa>0$. Then the operator $(M_{T,\kappa}-c)\colon\mathcal{C}^{s}(\T)\to\mathcal{C}^{s-\frac{1}{2}}(\T)$ is Fredholm of index zero. Moreover, we have the characterization
    \begin{equation*}
        \ker \big(M_{T,\kappa}-c\big) = \spann\big\{\cos(kx),\sin(kx):\, \text{for $k\in \N_0$ such that } m_{T}(\kappa k)=c\big\}.
    \end{equation*}
\end{proposition}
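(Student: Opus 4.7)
The plan is to split $(M_{T,\kappa}-c)$ into an isomorphism plus a compact perturbation and then read off the kernel via Fourier series.

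First, I would establish that the principal part $M_{T,\kappa}\colon \mathcal{C}^{s}(\T)\to\mathcal{C}^{s-\frac12}(\T)$ is a topological isomorphism. The symbol $\xi\mapsto m_T(\kappa\xi)$ is smooth, strictly positive, and behaves like $\sqrt{T\kappa|\xi|}$ at infinity with $m_T(0)=1$; consequently $1/m_T(\kappa\xi)$ is a smooth positive symbol of order $-\tfrac12$, so $M_{T,\kappa}^{-1}$ exists as a Fourier multiplier and, by the standard mapping properties of multipliers of a given order on the Zygmund scale (see the reference to \cite{Ehrnstroem2019} quoted in the excerpt, or \cite[Section 13.8]{Taylor_PDEs3}), $M_{T,\kappa}$ and $M_{T,\kappa}^{-1}$ are bounded between the corresponding spaces. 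This delivers the desired isomorphism.

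Next, I would observe that the zeroth order operator $c\cdot I\colon \mathcal{C}^{s}(\T)\to\mathcal{C}^{s-\frac12}(\T)$ factors through the compact embedding $\mathcal{C}^{s}(\T)\hookrightarrow \mathcal{C}^{s-\frac12}(\T)$ recalled in the preliminaries, and is therefore a compact operator. Hence $M_{T,\kappa}-c$ is the sum of an isomorphism and a compact operator, which is automatically Fredholm, and its Fredholm index coincides with that of $M_{T,\kappa}$, namely zero. This takes care of the first claim.

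For the kernel characterization, I would proceed by Fourier series on $\T$. Any $u\in \mathcal{C}^s(\T)\subset L^2(\T)$ admits a Fourier expansion $u=\sum_{k\in\Z}\hat u(k)e^{ikx}$, and the equation $(M_{T,\kappa}-c)u=0$ is equivalent to $(m_T(\kappa k)-c)\hat u(k)=0$ for every $k\in \Z$. Since $m_T$ is even, $\hat u(k)$ must vanish unless $m_T(\kappa k)=c$; imposing that $u$ is real-valued then recombines the modes $\pm k$ into the real basis $\{\cos(kx),\sin(kx)\}$, giving precisely the claimed span. The solution set for each admissible $k$ is finite-dimensional, and by Lemma \ref{lemma:mTproperties} only finitely many nonnegative $k$ satisfy $m_T(\kappa k)=c$, so the kernel is indeed finite-dimensional, consistent with the Fredholm property just established.

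I do not anticipate a serious obstacle here: the only technical point is the boundedness of $M_{T,\kappa}^{\pm 1}$ on Zygmund spaces, which is a routine consequence of the symbol being smooth of the relevant order and can be taken from \cite{Ehrnstroem2019,Taylor_PDEs3} without elaboration. The rest is the standard ``isomorphism plus compact'' argument together with a one-line Fourier computation.
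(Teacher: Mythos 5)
Your proposal is correct and follows essentially the same route as the paper: both rest on the isomorphism property of $M_{T,\kappa}$ from \cite{Ehrnstroem2019} together with the compact embedding $\mathcal{C}^{s}(\T)\hookrightarrow\mathcal{C}^{s-\frac12}(\T)$ to exhibit $M_{T,\kappa}-c$ as an index-zero Fredholm operator (the paper factors it as $(1-cM_{T,\kappa}^{-1})M_{T,\kappa}$ rather than writing isomorphism plus compact, a purely cosmetic difference), and both read off the kernel from the Fourier series exactly as you do.
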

\begin{proof}
By \cite[Lemma 2.4]{Ehrnstroem2019}, $M_{T,\kappa}^{-1}$ is a bijection between $\mathcal{C}^{s-\frac{1}{2}}(\T)$ and $\mathcal{C}^{s}(\T)$, and since $\mathcal{C}^{s}(\T)$ embeds compactly in $\mathcal{C}^{s-\frac{1}{2}}(\T)$ it follows that $cM_{T,\kappa}^{-1}$ is a compact operator on $\mathcal{C}^{s-\frac{1}{2}}(\T)$. Thus $1-cM^{-1}_{T,\kappa}$ is a Fredholm operator of index zero on $\mathcal{C}^{s-\frac{1}{2}}(\T)$ (as it is a compact pertubation of the identity).  We conclude that 
\begin{equation*}
    M_{T,\kappa}-c = (1-cM_{T,\kappa}^{-1})M_{T,\kappa}
\end{equation*}
is a Fredholm operator of index zero from $\mathcal{C}^{s}(\T)$ to $\mathcal{C}^{s-\frac{1}{2}}(\T)$. Finally, the expression for the kernel can be seen as follows:
Since $s-\frac{1}{2}>\frac{1}{2}$, it follows that elements of $\mathcal{C}^{s-\frac{1}{2}}(\T)$ have convergent Fourier series \cite{katznelson_2004}. In particular, writing $u\in \mathcal{C}^{s}(\T)$ in terms of its Fourier series we find 
\begin{align*}
    u(x) =&\,  a_0 + \sum_{k=1}^\infty a_k\cos(kx)+b_k\sin(kx),\\
   \implies \big(M_{T,\kappa}-c\big)u (x) =&\,(1-c)a_0 + \sum_{k=1}^\infty \big(m_{T}(\kappa k)-c\big)\big(a_k\cos(kx)+b_k\sin(kx)\big),
\end{align*}
and so we conclude that the kernel of $M_{T,\kappa}-c$ is as described.
\end{proof}

\subsection{Properties of the Fourier symbol}
The following lemma is proved in the appendix of \cite{TRICHTCHENKO2016147} and characterizes the behaviour of the symbol $m_T$ in the three regimes of zero ($T=0$), weak ($0<T<\frac{1}{3})$, and strong ($T\geq \frac{1}{3}$) surface tension.
\begin{lemma}[Behaviour of $m_T$]\label{lemma:mTproperties}
    If $T=0$ then $m_T'(\xi)< 0$ for all $\xi>0$. If instead $T\geq \frac{1}{3}$ then $m_T'(\xi)> 0$ for all $\xi>0$. Finally, if $T\in(0,\frac{1}{3})$ then there is a number $\xi_T\in(0,\infty)$ such that 
    \begin{align*}
        m_T'(\xi)<&\,0 \text{ for } \xi\in(0,\xi_T),\\
            m_T'(\xi)>&\,0 \text{ for } \xi\in(\xi_T,\infty).
    \end{align*}
Moreover,  $T\mapsto \xi_T$ is analytic and strictly decreasing on $(0,\frac{1}{3})$, and it admits the limits $\lim_{T\downarrow 0}\xi_T=\infty$ and $\lim_{T\uparrow \frac{1}{3}}\xi_T=0$.
\end{lemma}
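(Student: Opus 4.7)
The plan is to reduce the question to analyzing the squared symbol $f_T(\xi) := m_T(\xi)^2 = \tanh(\xi)/\xi + T\xi\tanh(\xi)$, whose derivative has the same sign as $m_T'$ (since $m_T > 0$). A direct computation yields
\[
f_T'(\xi) = \frac{\xi\,\mathrm{sech}^2(\xi) - \tanh(\xi)}{\xi^2} + T\bigl(\tanh(\xi) + \xi\,\mathrm{sech}^2(\xi)\bigr),
\]
in which the first summand is strictly negative (the standard fact that $\tanh(\xi)/\xi$ is strictly decreasing on $(0,\infty)$) and the second strictly positive. Hence $f_T'(\xi) > 0$ if and only if $T > h(\xi)$, where
\[
h(\xi) := \frac{\sinh(2\xi) - 2\xi}{\xi^2\bigl(\sinh(2\xi) + 2\xi\bigr)}.
\]
All assertions of the lemma thus reduce to a single structural claim about $h$: namely, that $h \colon (0,\infty) \to (0,\tfrac{1}{3})$ is a real-analytic, strictly decreasing bijection with $h(0^+) = \tfrac{1}{3}$ and $h(\infty) = 0$.

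Granting these properties of $h$, the remainder is immediate. Analyticity of $h$ follows from the analyticity of $\sinh$ and the removability of its singularity at $\xi = 0$; the boundary values come from $\sinh(2\xi) = 2\xi + \tfrac{4}{3}\xi^3 + O(\xi^5)$ and $\sinh(2\xi) \sim \tfrac{1}{2}e^{2\xi}$. For $T = 0$ we have $0 < h(\xi)$ throughout $(0,\infty)$, so $f_T' < 0$ throughout; for $T \geq \tfrac{1}{3}$ we have $T > h(\xi)$ throughout, so $f_T' > 0$; and for $T \in (0,\tfrac{1}{3})$ the value $\xi_T := h^{-1}(T)$ is the unique point at which $f_T'$ switches sign from negative to positive. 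The analyticity and strict monotonicity of $T \mapsto \xi_T$ then follow from the analytic inverse function theorem applied to $h$ (using $h'<0$ throughout), while the limits $\xi_T \to \infty$ as $T \downarrow 0$ and $\xi_T \to 0$ as $T \uparrow \tfrac{1}{3}$ are direct consequences of the corresponding boundary values of $h$.

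The technical heart of the argument is thus proving strict monotonicity of $h$. I would tackle this by computing $(\log h)'$ directly: after clearing the common denominator $(\sinh(2\xi)-2\xi)(\sinh(2\xi)+2\xi)\,\xi$, the inequality $(\log h)'(\xi) < 0$ reduces to showing that a concrete combination of $\sinh(2\xi)$, $\cosh(2\xi)$ and powers of $\xi$ is strictly negative for every $\xi > 0$. Using the identities $\sinh(2\xi) = 2\sinh(\xi)\cosh(\xi)$ and $\cosh(2\xi) + 1 = 2\cosh^2(\xi)$, the resulting expression can be reorganised and then expanded as a power series about $\xi = 0$; the claim then follows by verifying that every Taylor coefficient has the required sign, which is an elementary induction using the explicit coefficients of $\sinh$ and $\cosh$. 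This last step is the only place where nontrivial computation is required, and it is essentially the hyperbolic-function inequality carried out in the appendix of the cited reference.
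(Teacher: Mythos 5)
The paper offers no proof of this lemma at all: it simply cites the appendix of Trichtchenko et al.\ \cite{TRICHTCHENKO2016147}. Your proposal therefore provides strictly more than the paper does, and the reduction you set up is sound. Passing to $f_T = m_T^2$ is legitimate since $m_T>0$, the splitting of $f_T'$ into a strictly negative $T$-independent part and a strictly positive $T$-proportional part is correct, the resulting threshold function $h(\xi)=\dfrac{\sinh(2\xi)-2\xi}{\xi^2(\sinh(2\xi)+2\xi)}$ is computed correctly, and the boundary values $h(0^+)=\tfrac13$, $h(\infty)=0$ follow from the expansions you cite. Once strict monotonicity of $h$ is granted, the remaining claims (sign of $m_T'$, analyticity and strict monotonicity of $T\mapsto\xi_T$, and the two limits) follow exactly as you say via $\xi_T=h^{-1}(T)$ and the analytic inverse function theorem.

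The gap is precisely the step you yourself flag as "the technical heart": strict monotonicity of $h$ on all of $(0,\infty)$. You sketch a power-series strategy and then defer to "essentially the hyperbolic-function inequality carried out in the appendix of the cited reference," so the proposal is not self-contained on exactly the point that constitutes the entire analytic content of the lemma. Two cautions are warranted here. First, the assertion that "every Taylor coefficient has the required sign" is not automatic and must actually be proved; in particular, the numerator of $(\log h)'$ (after clearing the positive denominator $\xi(\sinh^2(2\xi)-4\xi^2)$) vanishes to order $\xi^6$ at the origin, so the inequality is degenerate at low order and the sign of the series is not visible without genuine computation. Second, even if all coefficients turn out nonpositive, one must organize the expansion so that this is manifest; a direct expansion of $(\log h)'$ does not obviously yield a single-signed series, so the "elementary induction" needs to be exhibited rather than asserted. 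Until that is done, your argument reduces the lemma correctly but does not complete it.
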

\begin{corollary}\label{cor: allBifurcationPointsAreSimpleInTheStrongSurfaceTensionRegime}
    All bifurcation points $(c_0,\kappa_0,T_0)$ with either $T_0\geq \frac{1}{3}$ or $T_0=0$ are simple.
\end{corollary}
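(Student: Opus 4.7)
The plan is to extract the content of the corollary directly from the strict monotonicity statement in Lemma \ref{lemma:mTproperties}. Recall that in the zero surface tension regime $T_0 = 0$ the symbol $m_{T_0}$ is strictly decreasing on $(0,\infty)$, while in the strong surface tension regime $T_0 \geq \tfrac{1}{3}$ it is strictly increasing on $(0,\infty)$. Either way, $m_{T_0}$ is injective on $(0,\infty)$, and this is essentially the only ingredient I need.

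The next step is a short discretization argument. For any $\kappa_0 > 0$, the map $k \mapsto \kappa_0 k$ is strictly increasing from $\Z_+$ into $(0,\infty)$. Composing with the strictly monotone function $m_{T_0}$, the map $k \mapsto m_{T_0}(\kappa_0 k)$ is itself strictly monotone, hence injective, on $\Z_+$. Therefore, for the fixed wave speed $c_0$, there can be at most one $k \in \Z_+$ with $c_0 = m_{T_0}(\kappa_0 k)$.

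Finally, I appeal to Definition \ref{def: simpleAndDoubleBifrucationPoints}: being a double bifurcation point requires two distinct $k_1, k_2 \in \Z_+$ solving $c_0 = m_{T_0}(\kappa_0 k_j)$, which is precisely what the previous step rules out. Since a bifurcation point (excluding the degenerate $c_0=1$, $k=0$ case flagged in the paragraph preceding the definition) must be either simple or double, it must in fact be simple. There is no genuine obstacle; the entire content of the corollary is packaged inside the lemma, and the proof is essentially one line: strict monotonicity of $m_{T_0}$ on $(0,\infty)$ forbids two distinct positive integer wave numbers from sharing the same value of $m_{T_0}(\kappa_0 \cdot)$.
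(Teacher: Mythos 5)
Your proof is correct and follows essentially the same route as the paper's: both rest entirely on the strict monotonicity of $\xi\mapsto m_{T_0}(\xi)$ from Lemma \ref{lemma:mTproperties}, from which strict monotonicity (hence injectivity) of $k\mapsto m_{T_0}(\kappa_0 k)$ over the nonnegative integers follows immediately, ruling out two distinct positive wave numbers. The paper states this in a single line; your version merely spells out the composition argument more explicitly.
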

\begin{proof}
    By the previous lemma we find that $k\mapsto m_{T_0}(\kappa_0 k)$ is strictly monotone on $\N_0$ and so there can be at most one solution to the equation $m_{T_0}(\kappa_0 k)=c_0$ for $k\in \N_0$.
\end{proof}
The previous corollary shows that double bifurcation points can only be found for (weak) surface tensions in the interval $(0,\frac{1}{3})$. In this regime, such points admit the following characterization:
\begin{proposition}\label{prop: c0AndKappa0AreDeterminedByK1K2AndT}
    For any $T\in(0,\frac{1}{3})$ and two integers $1\leq k_1<k_2$ there exists a unique pair of positive numbers $(c_0,\kappa_0)\in \mathbb{R}_+^2$ such that
    \begin{align*}
        m_T(\kappa_0 k_1) = c_0 = m_T(\kappa_0 k_2).
    \end{align*}
We further have $m_T'(\kappa_0 k_1)<0<m_T'(\kappa_0 k_2)$, and the asymptotic behaviour of $c_0=c_0(T ;k_1,k_2)$ and $\kappa_0=\kappa_0(T;k_1,k_2)$ as $T\downarrow 0$ is given by
    \begin{align*}
        c_0 \sim T^{1/4}\sqrt{\sqrt{\frac{k_1}{k_2}}+\sqrt{\frac{k_2}{k_1}}} \quad \text{ and }\quad \kappa_0\sim \frac{1}{\sqrt{k_1k_2 T}},
    \end{align*}
while their asymptotic behaviour as $T\uparrow \frac{1}{3}$ is given by
\begin{align*}
       c_0\sim 1 \quad\text{ and }\quad \kappa_0 \sim \sqrt{\left(\frac{1}{3}-T\right)\frac{45}{k_1^2+k_2^2}}.
\end{align*}
\end{proposition}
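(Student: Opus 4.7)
The plan is to treat the four pieces of the proposition separately, all by working with the one-variable profile $m_T$ and exploiting the unimodality from \Cref{lemma:mTproperties}.

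For \emph{existence and uniqueness}, I first note $m_T(0^+)=1$ (by Taylor expanding $(1+T\xi^2)\tanh(\xi)/\xi$), so under the weak-surface-tension hypothesis $m_T\colon(0,\xi_T]\to[\min m_T,1)$ and $m_T\colon[\xi_T,\infty)\to[\min m_T,\infty)$ are strictly monotone and hence invertible, with inverses I denote $\xi^-$ and $\xi^+$. Any admissible pair must satisfy $c_0\in(\min m_T,1)$ together with $\kappa_0 k_1=\xi^-(c_0)$ and $\kappa_0 k_2=\xi^+(c_0)$, so existence and uniqueness are equivalent to the equation $\rho(c):=\xi^+(c)/\xi^-(c)=k_2/k_1$ admitting a unique solution. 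The function $\rho$ is strictly increasing on $(\min m_T,1)$ (since $\xi^-$ decreases and $\xi^+$ increases in $c$), tends to $1$ as $c\downarrow\min m_T$ (both arguments collapse to $\xi_T$) and to $\infty$ as $c\uparrow 1$ (since $\xi^-\downarrow 0$ while $\xi^+$ stays bounded by the unique $\xi^{**}\in(\xi_T,\infty)$ with $m_T(\xi^{**})=1$). The intermediate value theorem then supplies the required $c_0$, and $\kappa_0$ is determined by either equation. The sign assertion $m_T'(\kappa_0 k_1)<0<m_T'(\kappa_0 k_2)$ is immediate from $\kappa_0 k_1<\xi_T<\kappa_0 k_2$ and \Cref{lemma:mTproperties}.

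For the \emph{$T\uparrow 1/3$ asymptotics}, the same lemma gives $\xi_T\downarrow 0$, so $\kappa_0 k_j\to 0$, allowing the Taylor expansion
\begin{equation*}
m_T(\xi)^2=1+\bigl(T-\tfrac{1}{3}\bigr)\xi^2+\bigl(\tfrac{2}{15}-\tfrac{T}{3}\bigr)\xi^4+O(\xi^6).
\end{equation*}
Setting $m_T(\kappa k_1)^2=m_T(\kappa k_2)^2$, using $k_1^4-k_2^4=(k_1^2-k_2^2)(k_1^2+k_2^2)$, and dividing by $\kappa^2(k_1^2-k_2^2)\neq 0$, yields
\begin{equation*}
(T-\tfrac{1}{3})+\bigl(\tfrac{2}{15}-\tfrac{T}{3}\bigr)\kappa^2(k_1^2+k_2^2)+O(\kappa^4)=0.
\end{equation*}
Since $\tfrac{2}{15}-\tfrac{T}{3}\to\tfrac{1}{45}$ at $T=\tfrac{1}{3}$, this forces $\kappa_0^2\sim 45(\tfrac{1}{3}-T)/(k_1^2+k_2^2)$, and then $c_0=m_T(\kappa_0 k_1)\to 1$ follows directly from the expansion. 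For the \emph{$T\downarrow 0$ asymptotics}, $\xi_T\to\infty$, so the idea is to replace $\tanh$ by $1$ and work with the large-argument proxy $\widetilde{m}(\xi)^2=T\xi+1/\xi$. The algebraic equation $\widetilde m(\kappa k_1)^2=\widetilde m(\kappa k_2)^2$ becomes the quadratic $T\xi^2-c^2\xi+1=0$ in $\xi\in\{\kappa k_1,\kappa k_2\}$, whose roots satisfy $\xi^-\xi^+=1/T$ and $\xi^-+\xi^+=c^2/T$; imposing $\xi^+/\xi^-=k_2/k_1$ yields $\kappa_0\sim 1/\sqrt{k_1k_2T}$, and substitution back gives $c_0^2\sim\sqrt{T}(\sqrt{k_1/k_2}+\sqrt{k_2/k_1})$, matching the stated form.

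The \emph{main obstacle} is to upgrade the formal $T\downarrow 0$ computation to a rigorous asymptotic: the bound $\kappa_0 k_1<\xi_T$ by itself does not force $\kappa_0 k_1\to\infty$, so one must first rule out a bounded limit, which I would do by arguing that if $\kappa_0 k_1$ were bounded then $c_0=m_T(\kappa_0 k_1)$ would be bounded away from zero, yet $\min m_T\sim 2^{1/2}T^{1/4}\to 0$, forcing $\rho(c_0)\to 1$ in contradiction to $\rho(c_0)=k_2/k_1>1$. Once both $\kappa_0 k_j\to\infty$, the $\tanh$-correction is exponentially small, and a clean finish is to rescale $(\hat\kappa,\hat c)=(\sqrt{T}\kappa,T^{-1/4}c)$ and apply the analytic implicit function theorem near the non-degenerate solution of the limiting algebraic system to produce the two stated asymptotic equivalences.
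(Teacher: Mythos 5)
Your overall strategy is sound, and two of your three pieces are essentially equivalent to the paper's argument, but you take a genuinely different and more laborious route for the $T\downarrow 0$ asymptotics, and there is one concrete slip there.

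\textbf{Comparison.} For existence and uniqueness, you parametrize by $c$ via the inverse branches $\xi^\pm$ and study the ratio $\rho(c)=\xi^+(c)/\xi^-(c)$; the paper instead parametrizes directly by $\kappa$ and observes that $\kappa\mapsto m_T(k_1\kappa)-m_T(k_2\kappa)$ has opposite signs at the ends of $(\xi_T/k_2,\xi_T/k_1)$ and strictly monotone behaviour there. Both work; the paper's version is marginally shorter and immediately yields the useful two-sided sandwich $\kappa_0\in(\xi_T/k_2,\xi_T/k_1)$. Your $T\uparrow\frac{1}{3}$ computation (Taylor-expanding $m_T^2$ and cancelling the $k_1^2-k_2^2$ factor) is essentially the same as the paper's expansion of its auxiliary function $\varphi(\kappa)=(k_1\tanh\kappa k_2-k_2\tanh\kappa k_1)/(k_1\tanh\kappa k_1-k_2\tanh\kappa k_2)$. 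For $T\downarrow 0$, however, the paper's rearrangement $T\kappa^2 k_1 k_2=\varphi(\kappa)$ is the key simplification: it isolates the $T$-dependence on the left, and then $\varphi(\kappa)\to 1$ as $\kappa\to\infty$ yields $\kappa_0\sqrt{Tk_1k_2}\to 1$ in one line, with no proxy model, no rescaling, and no implicit function theorem. Your proxy/quadratic computation produces the correct formal answer, but upgrading it requires the extra machinery you anticipate, making the argument considerably longer than necessary.

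\textbf{The slip.} Your proposed rescue of the $T\downarrow 0$ step contains an error: you claim that if $\kappa_0 k_1$ stayed bounded then ``$\rho(c_0)\to 1$''. That is the wrong direction. If $c_0$ stays bounded away from zero while $\min m_T\to 0$, then $\xi^-(c_0)$ is bounded but $\xi^+(c_0)\to\infty$ (on the increasing branch, a bounded value of $m_T$ forces $\xi\approx c^2/T\to\infty$), so the contradiction is $\rho(c_0)\to\infty$, not $\to 1$. More to the point, the whole contradiction argument is unnecessary even inside your framework: by construction $\kappa_0 k_2=\xi^+(c_0)>\xi_T\to\infty$, hence $\kappa_0 k_1=(k_1/k_2)\kappa_0 k_2\to\infty$ as well. (Equivalently, the paper's lower bound $\kappa_0>\xi_T/k_2$ gives the same conclusion directly.) Once this is noted, you can drop both the contradiction and the implicit-function-theorem finish, and the $T\downarrow 0$ limit follows cleanly from $\varphi(\kappa_0)\to 1$.
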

\begin{proof}
    We begin by solving
    \begin{equation}\label{eq:bifpoint}
        m_T(k_1\kappa)=m_T(k_2\kappa)
    \end{equation}
    in $\kappa>0$. By Lemma \ref{lemma:mTproperties} there is a number $\xi_T\in (0,\infty)$ such that $m'_T$ is negative on $(0,\xi_T)$ and positive on $(\xi_T,\infty)$. Since $k_1<k_2$ we find that $m_T(k_1\kappa)>m_T(k_2\kappa)$ for $\kappa\leq \xi_T/k_2$ and $m_T(k_1\kappa)<m_T(k_2\kappa)$ for $\xi_T/k_1\leq \kappa$, so there exists a $\kappa_0\in (\frac{\xi_T}{k_2},\frac{\xi_T}{k_1})$ solving \cref{eq:bifpoint}. Moreover, $m_T'(k_1\kappa)<0$ and $m_T'(k_2\kappa)>0$ for $\kappa\in (\frac{\xi_T}{k_2},\frac{\xi_T}{k_1})$, which means that $\kappa_0$ is unique and, in particular, $m_T'(k_1\kappa_0)<0$ and $m_T'(k_2\kappa_0)>0$. 
    
    Since $\kappa_0\in (\frac{\xi_T}{k_2},\frac{\xi_T}{k_1})$ we immediately get from the behaviour of $\xi_T$ (Lemma \ref{lemma:mTproperties}) that
    \begin{equation}\label{eq:kappa0limits}
        \lim_{T\downarrow 0} \kappa_0=\infty,\qquad \lim_{T\uparrow \frac{1}{3}} \kappa_0=0,
    \end{equation}
    however, for future reference we want more precise asymptotic behaviour of $\kappa$.
    To this end we note that \cref{eq:bifpoint} is equivalent to
    \begin{equation}\label{eq: definitionOfVarphi}
         T\kappa^2k_1k_2=\frac{k_1\tanh{\kappa k_2}-k_2\tanh{\kappa k_1}}{k_1\tanh{\kappa k_1}-k_2\tanh{\kappa k_2}}\eqqcolon \varphi(\kappa).
    \end{equation}
    Using \cref{eq:kappa0limits}, we read from \cref{eq: definitionOfVarphi} that $\lim_{T\downarrow 0}T\kappa_0^2k_1k_2=\lim_{T\downarrow 0} \varphi(\kappa_0)=1$, or equivalently
    \[
        \lim_{T\downarrow 0}\kappa_0\sqrt{Tk_1k_2}=1.
    \]
    By Taylor expansion, we have  
    \begin{equation}\label{eq:phiexp1/3}
        \varphi(\kappa)=\frac{1}{3}\kappa^2k_1k_2-\frac{1}{45}\kappa^4k_1k_2(k_1^2+k_2^2)+\mathcal{O}(\kappa^6)\qquad\text{as }\kappa\to0.
    \end{equation}
    Replacing the left hand side of \cref{eq:phiexp1/3} by the left-most expression in \cref{eq: definitionOfVarphi} we get, after dividing each side by $\kappa^2k_1k_2$ and some rewriting,
    \[
        \kappa^2_0=\left(\frac{1}{3}-T\right)\frac{45}{k_1^2+k_2^2}+\mathcal{O}(\kappa^4_0)\qquad \implies\qquad  \lim_{T\uparrow \frac{1}{3}} \frac{\kappa_0}{\sqrt{\left(\frac{1}{3}-T\right)\frac{45}{k_1^2+k_2^2}}}=1,
    \]
    where the implication follows from the already established $\lim_{T\uparrow \frac{1}{3}}\kappa_0=0$ by \cref{eq:kappa0limits}.
    The asymptotic behaviour of $c_0$ then follows from that of $\kappa_0$ and the identity $c_0=m_T(k_1\kappa_0)$ or equivalently $c_0=m_T(k_2\kappa_0)$.
\end{proof}

\subsection{Asymmetry of sums of sinusoids}\label{sec: asymmetryOfSinusoids}
This subsection is dedicated to characterizing the symmetric and asymmetric elements in the space
\begin{align}\label{eq: theSpaceVForAsymmetryDiscussion}
    V = \spann \{\cos(k_1 x),\cos(k_2 x),\sin(k_1 x),\sin(k_2 x)\}.
\end{align}
This turns out to be easier when elements $v\in V$ are expressed in a kind of polar form; see \cref{eq: theRepresentationOfElementsInV} below. Note that this representation covers all of $V$ since applying the cosine sum formula to \cref{eq: theRepresentationOfElementsInV} gives
\begin{align*}
v(x)=&\,\overbrace{r_1\cos(k_1\theta_1)}^{\tau_1}\cos(k_1x)+\overbrace{r_2\cos(k_2\theta_2)}^{\tau_2}\cos(k_2x)\\
   &\, -\underbrace{r_1\sin(k_1\theta_1)}_{\tau_3}\sin(k_1x)-\underbrace{r_2\sin(k_2\theta_2)}_{\tau_4}\sin(k_2x),
\end{align*}
and by varying $r_1,r_2,\theta_1,\theta_2$ we can obtain any vector $\tau\in\R^4$.

\begin{proposition}[Asymmetry condition]\label{proposition: criterionForAsymmetryOfV}
    Let the integers $1\leq k_1<k_2$ be coprime and let $r_1,r_2,\theta_1,\theta_2\in\R$. Then the function
    \begin{equation}\label{eq: theRepresentationOfElementsInV}
v(x)=r_1\cos\Big(k_1(x+\theta_1)\Big)+r_2\cos\Big(k_2(x+\theta_2)\Big)
    \end{equation} is asymmetric if, and only if, $r_1r_2\neq 0$ and $\theta_1-\theta_2\notin \frac{\pi}{k_1k_2}\Z$.
\end{proposition}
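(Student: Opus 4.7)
The plan is to work directly with the definition: $v$ is symmetric iff there exists $a\in\R$ such that $v(\cdot+a)$ is even. Expanding using the sum formula, I would write
\[
v(x+a) = r_1\cos(k_1(a+\theta_1))\cos(k_1x) + r_2\cos(k_2(a+\theta_2))\cos(k_2x)
\]
\[
\qquad\qquad - r_1\sin(k_1(a+\theta_1))\sin(k_1x) - r_2\sin(k_2(a+\theta_2))\sin(k_2x).
\]
Since $0<k_1<k_2$, the four functions $\cos(k_1x),\cos(k_2x),\sin(k_1x),\sin(k_2x)$ are $L^2(\T)$-orthogonal and in particular linearly independent. Evenness of $v(\cdot+a)$ is therefore equivalent to the vanishing of the two sine coefficients:
\[
(\star)\qquad r_1\sin(k_1(a+\theta_1))=0, \qquad r_2\sin(k_2(a+\theta_2))=0.
\]

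For the forward implication (assuming $v$ is asymmetric, derive that $r_1r_2\neq0$ and $\theta_1-\theta_2\notin\tfrac{\pi}{k_1k_2}\Z$), I would argue the contrapositive. If $r_1=0$ (resp.\ $r_2=0$), then $v$ is a single cosine, trivially symmetric via a translation centering it; so $r_1r_2\neq 0$ is necessary for asymmetry. Assuming now $r_1r_2\neq0$ and that $v$ is symmetric, $(\star)$ forces $k_i(a+\theta_i)\in\pi\Z$, i.e.\ $a+\theta_i = m_i\pi/k_i$ for integers $m_1,m_2$. Subtracting yields
\[
\theta_1-\theta_2 \,=\, \frac{m_1 k_2 - m_2 k_1}{k_1k_2}\pi \,\in\, \tfrac{\pi}{k_1k_2}\Z,
\]
contradicting the assumption.

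For the reverse direction, I would show that either condition $r_1r_2=0$ or $\theta_1-\theta_2\in\tfrac{\pi}{k_1k_2}\Z$ implies symmetry. The first case is immediate by translation. In the second case, write $\theta_1-\theta_2 = n\pi/(k_1k_2)$ for some $n\in\Z$; since $\gcd(k_1,k_2)=1$, B\'ezout's identity provides integers $m_1,m_2$ with $m_1k_2 - m_2 k_1 = n$. Setting $a := m_1\pi/k_1 - \theta_1$ then gives $a+\theta_2 = m_2\pi/k_2$, so both equations in $(\star)$ are satisfied and $v(\cdot+a)$ is even.

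The argument is essentially elementary; the only genuinely nontrivial ingredient is the use of B\'ezout's identity, which is where the coprimality hypothesis is consumed. There is no substantial obstacle — the main subtlety is merely to recognize that the divisibility lattice $\tfrac{\pi}{k_1}\Z + \tfrac{\pi}{k_2}\Z$ coincides with $\tfrac{\pi}{k_1k_2}\Z$ precisely under $\gcd(k_1,k_2)=1$.
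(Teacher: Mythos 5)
Your proof is correct and follows essentially the same route as the paper: reduce evenness of $v(\cdot+a)$ to the vanishing of the two sine coefficients, then translate this into the lattice condition on $\theta_1-\theta_2$ via B\'ezout's identity and coprimality. The only (cosmetic) difference is that you extract the sine conditions from linear independence of the sinusoids, whereas the paper gets them from $v'(a)=v'''(a)=0$.
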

\begin{proof}
    That both $r_1,r_2$ have to be nonzero follows as a single translated cosine is symmetric (according to Definition \ref{def:asymmetric}). Thus, we assume from here on that $r_1r_2\neq 0$ and turn to the criterion $\theta_1-\theta_2\neq \frac{\pi}{k_1k_2}\Z$. For necessity, assume $x\mapsto v(x+a)$ is symmetric for some $a\in\R$. This forces $\sin(k_1 a)=\sin(k_2 a)=0$ (since we must have $v'(a)=v'''(a)=0$) from which we conclude that there are integers $m_1,m_2\in\Z$ such that $k_1(a + \theta_1)=m_1\pi $ and $ k_2(a + \theta_2)=m_2\pi$. In particular, we get the desired conclusion because
    \begin{equation*}
          \theta_1-\theta_2=\frac{(m_1k_2 -m_2k_1)\pi}{k_1k_2}.
    \end{equation*}
    Next, for sufficiency, suppose $\theta_1-\theta_2=\frac{n\pi }{k_1k_2}$ for some $n\in\Z$. As $k_1$ and $k_2$ are coprime, we can find $m_1,m_2\in\Z$ such that $n=m_1k_2-m_2k_1$. Then, setting
    \begin{align*}
        a\coloneqq  \frac{m_1\pi}{k_1}-\theta_1= \frac{m_2\pi }{k_2}-\theta_2,
    \end{align*}
we see that $x\mapsto v(x+a)=r_1\cos(m_2\pi)\cos(k_1x) +r_2\cos(m_1\pi)\cos(k_2x)$ is symmetric.
\end{proof}

\begin{remark}\label{rem: describingTheTopologyOfV}
  We here give a brief visual description of the previous proposition. For a point $r=(r_1,r_2)$ in the quarter plane $[0,\infty)^2$ and a point $\theta=(\theta_1,\theta_2)$ on the two-dimensional torus $\Theta\coloneqq \big(\R/\tfrac{2\pi}{k_1}\Z\big)\times \big(\R/\tfrac{2\pi}{k_2}\Z\big)$, let $v(r,\theta)$ denote the function described by \cref{eq: theRepresentationOfElementsInV}; any element of \cref{eq: theSpaceVForAsymmetryDiscussion} admits such a representation. For simplicity, assume for now that $(k_1,k_2)=(2,5)$; \Cref{fig:Vshape} depicts the domains $[0,\infty)^2$ and $\Theta$ with the regions $r_1r_2=0$ and $\theta_1-\theta_2\in \frac{\pi}{10}\Z$ marked blue.

  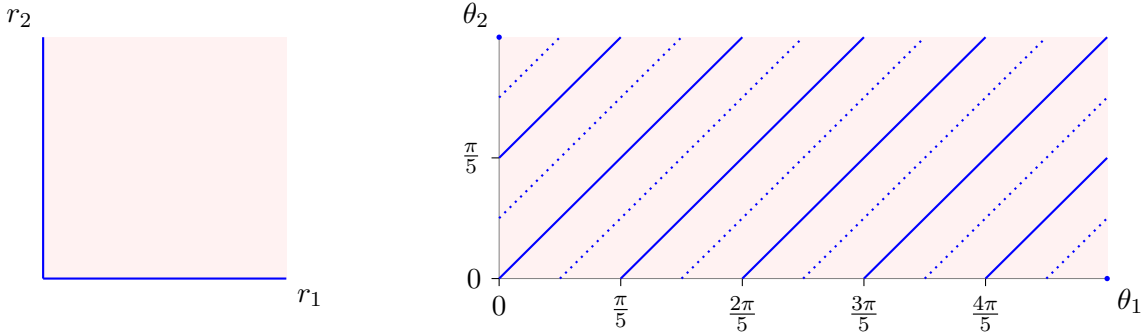
\begin{figure}[h!]
  \begin{tikzpicture}
  \fill[red!5] (0,0) rectangle (3.2,3.2);

  \draw[thick, blue] (0,0) -- (3.2,0) node[below right, black] {$r_1$};
  \draw[thick, blue] (0,0) -- (0,3.2) node[above left, black] {$r_2$};

  \begin{scope}[shift={(6,0)}]
    \fill[red!5] (0,0) rectangle (8,3.2);


 \draw[very thin, gray] (0,0) -- (8,0) node[below right, black] {$\theta_1$};
 
 \draw (0,0) -- (0,-0.1) node[below, black] {$0$};
  \draw (1.6,0) -- (1.6,-0.1) node[below, black] {$\frac{\pi}{5}$};
  \foreach \x in {2,3,4} {
    \draw ({8*\x/5},0) -- ({8*\x/5},-0.1) node[below, black] {$\frac{\x\pi}{5}$};
  }

  \draw[very thin, gray] (0,0) -- (0,3.2) node[above left, black] {$\theta_2$};
  \draw (0,0) -- (-0.1,0) node[left, black] {$0$};
  \draw (0,1.6) -- (-0.1,1.6) node[left, black] {$\frac{\pi}{5}$};

        \draw[thick, blue] (0,1.6) -- (1.6,3.2);
     \draw[thick, blue] (6.4,0) -- (8,1.6);

     \fill[blue] (0,3.2) circle (1pt); 
      \fill[blue] (8,0) circle (1pt);
   \foreach \i in {0,1,...,3} {
    \draw[thick, blue] ({1.6*\i},0) -- ({1.6*\i + 3.2},3.2);
  }
  \draw[dotted, thick, blue] (0,2.4) -- (0.8,3.2);
 \draw[dotted, thick, blue] (0,0.8) -- (2.4,3.2);
   \draw[dotted, thick, blue] (5.6,0) -- (8,2.4);
 \draw[dotted, thick, blue] (7.2,0) -- (8,0.8);
   \foreach \i in {0,1,...,2} {
    \draw[dotted, thick, blue] ({1.6*\i + 0.8},0) -- ({1.6*\i + 4},3.2);
  }
  \end{scope}
\end{tikzpicture}
\caption{Respective plots of the domains $[0,\infty)^2$ and $\Theta=\left(\R/{\pi}\Z\right)\times \left(\R/\tfrac{2\pi}{5}\Z\right)$. Blue regions denote points where $r_1r_2=0$ or $\theta_1-\theta_2\in \frac{\pi}{10}\Z$; the latter set can be split in $\theta_1-\theta_2\in \frac{\pi}{10}(2\Z)$ and $\theta_1-\theta_2\in \frac{\pi}{10}(\Z\setminus2\Z)$ marked by solid and dotted lines respectively.}
\label{fig:Vshape}
\end{figure}

\noindent By the previous proposition, $v(r,\theta)$ is asymmetrical if, and only if, both $r$ and $\theta$ have been chosen from a red region. The blue region in $\Theta$ consists of two disjoint circles (in the topology of $\Theta$): One is solid and corresponds to $\theta_1-\theta_2\in \frac{\pi}{10}(2\Z)$ and the other is dotted and corresponds to $\theta_1-\theta_2\in \frac{\pi}{10}(\Z\setminus 2\Z)$. Note that an $x$-translation $x\mapsto x + a$ of the function $v$ corresponds to a diagonal shift $(\theta_1,\theta_2)\mapsto (\theta_1+a,\theta_2+a)$ on $\Theta$, and so if $\theta\in \Theta$ lies on the solid/dotted circle, so will $\theta + a$. In particular, one obtains all elements in $V$ `up to translation' by restricting $\theta\in\Theta$ to some vertical (or horizontal) line-segment connecting the solid circle with itself.

The red region in $\Theta$ consist of two disjoint bands separated by the two blue circles. Each band is homeomorphic to the Cartesian product $(0,1)\times \T$. As for $[0,\infty)^2$, its red region is $(0,\infty)^2$. And so, the asymmetrical elements in $V$ from \cref{eq: theSpaceVForAsymmetryDiscussion} consists of two disjoint open sets, each homeomorphic to $(0,\infty)^2\times (0,1) \times \T $ (which is homeomorphic to $\R^3\times \T$). 

While we restricted our attention to $(k_1,k_2)=(2,5)$ for visual simplicity, the same conclusion holds for all coprime pairs: The asymmetrical elements of $V$ constitutes two regions, each homeomorphic to $\R^3\times \T$. We omit the proof as this fact is irrelevant to our analysis.\qedhere

\end{remark}

\subsection{Symmetry-preservation of Lyapunov--Schmidt reduction}
We here present various symmetry-properties regarding the Lyapunov--Schmidt reduction sketched out in Section \ref{sec: SetupAndMainResults}. Several details from said section will matter little for what follows. So for generality sake, we here assume only that $V$ is a closed subspace of $L^2(\T)$ which is closed under $x$-translations and spanned by a finite number of sinusoids. We denote by $W$ its orthogonal complement and set $W^s\coloneqq \mathcal{C}^s(\T)\cap W$ for $s\geq 0$. Finally, $P_V$ and $P_W$ will denote the orthogonal projections from $L^2(\T)$ to $V$ and $W$ respectively.

\begin{proposition}[Preservation of symmetry]\label{prop: UIsSymmetricIfAndOnlyIfVIs}
    Let $c,\kappa,T>0$ and let $u\in \mathcal{C}^s(\R)$, where $s>1$. Decompose $u$ into $v\coloneqq P_V u$ and $w\coloneqq P_W u$, and suppose that $w$ is the unique solution of \cref{eq: equationForW} in some zero-centered ball $B_r\subset W^s$. Then $u$ is symmetric if, and only if, $v$ is.
\end{proposition}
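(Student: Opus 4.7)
The plan is to exploit that $V$, $W$, and the operator $J$ are all invariant under both translations and reflections, and then use the uniqueness of $w$ to transfer symmetry from $v$ to $w$.

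For any $a\in\R$, let $\tau_a$ denote the translation $\tau_a f(x)=f(x+a)$ and let $R$ denote the reflection $Rf(x)=f(-x)$; the composition $S_a\coloneqq \tau_{-a}R\tau_a$ is reflection about the point $a$. Since $V$ is spanned by sinusoids $\cos(kx)$ and $\sin(kx)$, the space $V$ is invariant under $R$ (as $R\cos(kx)=\cos(kx)$ and $R\sin(kx)=-\sin(kx)$), and by assumption it is invariant under each $\tau_a$. The space $W=V^\perp$ is therefore also invariant, so the projections $P_V$ and $P_W$ commute with each $S_a$. Moreover, $J$ commutes with each $S_a$ as well: $M_{T,\kappa}$ is a Fourier multiplier with symmetric (and hence reflection-invariant) symbol, and the nonlinearity $u\mapsto u^2$ is pointwise and hence commutes with all compositions of translations and reflections.

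The ``only if'' direction is immediate: If there exists $a\in\R$ such that $\tau_a u$ is even, then $S_a u=u$. Applying $P_V$ and using that $P_V$ commutes with $S_a$ gives $S_a v=v$, i.e.\ $\tau_a v$ is even, so $v$ is symmetric.

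For the ``if'' direction, suppose $v$ is symmetric with symmetry point $a$, so $S_a v=v$. Apply $S_a$ to both $u$ and the equation $P_W J(u)=0$: since $J$ and $P_W$ commute with $S_a$, the function $\tilde w\coloneqq S_a w$ satisfies
\begin{equation*}
    P_W J(v+\tilde w) = P_W J(S_a u) = S_a P_W J(u) = 0.
\end{equation*}
Because $S_a$ is an isometry on $L^2(\T)$ and acts continuously on $\mathcal{C}^s(\T)$ with $\|S_a\tilde w\|_{\mathcal{C}^s}=\|\tilde w\|_{\mathcal{C}^s}$, we have $\tilde w\in B_r\subset W^s$. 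By the assumed uniqueness of $w$ in $B_r$, we conclude $\tilde w=w$, i.e.\ $S_a w=w$, so $\tau_a w$ is even. Then $\tau_a u=\tau_a v+\tau_a w$ is a sum of even functions and hence even, proving $u$ symmetric.

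The only mild subtlety is verifying that $\tilde w$ stays in the uniqueness ball $B_r$, which follows at once from the isometric action of $S_a$; aside from this, everything reduces to the equivariance of $J$, $P_V$, $P_W$ under $S_a$, which is built into the setup.
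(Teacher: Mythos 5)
Your proof is correct and takes essentially the same approach as the paper: both rely on the reflection operator commuting with $J$, $P_V$, $P_W$, being an isometry on $\mathcal{C}^s(\T)$, and the assumed uniqueness of $w$ in the ball. The only cosmetic difference is that you work directly with the conjugated operator $S_a=\tau_{-a}R\tau_a$, whereas the paper first translates so the symmetry axis is at zero and then uses $R$; these are equivalent.
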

\begin{proof}
For sufficiency, suppose that $v$ is symmetric. As $V$ is closed under translations, so is $W$, and we conclude that the operators $P_V$ and $P_W$ commute with $x$-translations. The same is true for the Fourier multiplier $M_{T,\kappa}-c$ and so the equation \cref{eq: equationForW} is translation symmetric. At the cost of translating $u,v,w$ we may then assume, without loss of generality, that $v$ is symmetric about zero. Consider now the reflection operator $\mathcal{R}\colon f(x)\mapsto f(-x)$. On the Fourier side, the operators $P_V$, $P_W$ and $M_{T,\kappa}-c$ correspond to real valued multipliers while $\mathcal{R}$ corresponds to the act of complex conjugation; thus, the operators commute. Applying then $\mathcal{R}$ to \cref{eq: equationForW}, we find that $\mathcal{R}w$ also satisfies said equation (since $\mathcal{R}v=v$). And because $\|\mathcal{R}w\|_{\mathcal{C}^s(\T)}=\|w\|_{\mathcal{C}^s(\T)}$ we conclude that $\mathcal{R}w=w$ due to the uniqueness assumption. Thus
        \begin{equation*}
\mathcal{R}u = \mathcal{R}v+\mathcal{R}w=v+w=u.
    \end{equation*}
For necessity, suppose, without loss of generality, that $u$ is symmetric about zero. Using that $\mathcal{R}$ commutes with $P_V$ we then get
    \begin{equation*}
\mathcal{R}v=\mathcal{R}P_Vu=P_V\mathcal{R}u=P_Vu=v.\qedhere
    \end{equation*}
\end{proof}
\begin{corollary}\label{cor: simpleBifurcationPointsGiveRiseToOnlySymmetricSolutions}
    Simple bifurcation points of \cref{eq: scaledSteadyCapillaryWhithamEquation} locally only give rise to symmetric solutions. Thus, in the strong-surface tension regime, $T\geq \frac{1}{3}$, the solutions of \cref{eq: scaledSteadyCapillaryWhithamEquation} are all symmetric near bifurcation points.
\end{corollary}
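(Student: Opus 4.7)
The plan is to leverage the Lyapunov--Schmidt decomposition together with the symmetry-preservation result \Cref{prop: UIsSymmetricIfAndOnlyIfVIs}. Let $(c_0,\kappa_0,T_0)$ be a simple bifurcation point with associated wave number $k$, so that
\[
V = \ker D_u J(0;c_0,\kappa_0,T_0) = \mathrm{span}\{\cos(kx),\sin(kx)\}
\]
is two-dimensional. For any solution $u$ of \cref{eq: scaledSteadyCapillaryWhithamEquation} sufficiently close to the trivial flow (with parameters near $(c_0,\kappa_0,T_0)$), the infinite-dimensional part $w = P_W u$ is the unique small solution of \cref{eq: equationForW}, as provided by the cited \Cref{prop: infdimAKAuniqueWSolution}. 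Hence the uniqueness hypothesis of \Cref{prop: UIsSymmetricIfAndOnlyIfVIs} is satisfied, and symmetry of $u$ is equivalent to symmetry of $v \coloneqq P_V u$.

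The crux is then the elementary observation that every $v\in V$ is symmetric. Indeed, writing $v(x)=A\cos(kx)+B\sin(kx)=r\cos\bigl(k(x+\theta)\bigr)$ in polar form, the shifted function $x\mapsto v(x-\theta)=r\cos(kx)$ is even, so $v$ meets the criterion of \Cref{def:asymmetric} with $a=-\theta$. Applying \Cref{prop: UIsSymmetricIfAndOnlyIfVIs} then yields symmetry of $u$, proving the first assertion.

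The second assertion follows at once: by \Cref{cor: allBifurcationPointsAreSimpleInTheStrongSurfaceTensionRegime}, every bifurcation point in the regime $T\geq \tfrac{1}{3}$ is simple, so the first part applies at each of them and all nearby solutions are symmetric. I do not foresee any real obstacle; the content of the corollary is just the dimensional remark that a kernel consisting of a single sinusoidal mode cannot harbor an asymmetric profile, coupled with the symmetry-preservation already established for the Lyapunov--Schmidt reduction.
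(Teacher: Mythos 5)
Your proposal is correct and takes essentially the same route as the paper: reduce to the kernel via the Lyapunov--Schmidt split, invoke \Cref{prop: UIsSymmetricIfAndOnlyIfVIs}, observe that a single-mode kernel contains only symmetric profiles, and settle the second assertion via \Cref{cor: allBifurcationPointsAreSimpleInTheStrongSurfaceTensionRegime}. The only slight imprecision is that you cite \Cref{prop: infdimAKAuniqueWSolution} directly, although it is stated for \emph{double} bifurcation points; the paper explicitly remarks that its proof carries over to the simple case, a small but worthwhile caveat.
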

\begin{proof}
    For simple bifurcation points $(c_0,\kappa_0,T_0)$ the kernel of $D_u J(0;c_0,\kappa_0,T_0)$ is given by 
    \begin{equation*}
        V= \spann\{\cos(x),\sin(x)\},
    \end{equation*}
    where we have chosen the appropriate scaling $\kappa_0>0$ as described by Remark \ref{rem: oneMayAssumeThatK1AndK2AreRelativePrimes}. Any element $v\in V$ admits a representation of the form $v(x)=r\cos(x+\theta)$ for some $r,\theta\in \R$; thus, all elements of $V$ are symmetric. The proof of Proposition \ref{prop: infdimAKAuniqueWSolution} works for simple bifurcation points as well, and so by Proposition \ref{prop: UIsSymmetricIfAndOnlyIfVIs} we conclude the symmetry of solutions found sufficiently close to the bifurcation point $(0,c_0,\kappa_0,T_0)$. 
    
    The last part of the corollary then follows from Corollary \ref{cor: allBifurcationPointsAreSimpleInTheStrongSurfaceTensionRegime} which tells us that every bifurcation point is simple in the strong-surface tension regime.
\end{proof}
\begin{remark}
    The above argument can equally be applied to the Whitham equation (where $T=0$); its periodic travelling wave solutions are thus necessarily symmetric near bifurcation points.
\end{remark}
\begin{remark}\label{remark: specialBifurcationForCEqualToOne}
    There is a special bifurcation taking place for the value $c_0=1$ and any $T_0>0$; the kernel of $D_uJ(0)$ can then, after selecting an appropriate scaling factor $\kappa_0>0$, be written
\begin{equation*}
    V=\spann\{1, \cos(x),\sin(x)\}.
\end{equation*}
    Such a bifurcation point is not simple, yet all the elements of $V$ are symmetric, and so the resulting solutions are also symmetric by \Cref{prop: UIsSymmetricIfAndOnlyIfVIs}.
\end{remark}

Next, with $J$ as in \cref{eq: scaledSteadyCapillaryWhithamEquation}, its translation symmetry and variational structure implies the following orthogonality property which enhances our Lyapunov--Schmidt reduction argument (see Section \ref{sec: lyapunovSchmidtArgument}). This interplay between symmetry and Lyapunov--Schmidt reduction is part of the more general framework found in \cite{Golubitsky1985}.
\begin{proposition}
   Let $u\in \mathcal{C}^s(\T)$ with $s>1$, and let $c,\kappa,T\in \R_+$. Then $J(u;c,\kappa,T)$ is orthogonal to $u' = \partial_x u$. That is
    \begin{equation*}
        \langle J(u),u'\rangle\coloneqq \frac{1}{2\pi}\int_{0}^{2\pi}J(u)u'\,dx=0.
    \end{equation*}
\end{proposition}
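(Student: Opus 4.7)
The plan is to prove the identity by splitting $J(u) = M_{T,\kappa}u - cu + u^2$ and handling each piece separately, using only two structural facts: the operator $M_{T,\kappa}$ is self-adjoint on $L^2(\T)$ (its Fourier symbol $m_T(\kappa\xi)$ is real and even in $\xi$), and $\partial_x$ is skew-adjoint on $L^2(\T)$ (via integration by parts, with no boundary contribution by periodicity). The regularity $u\in\mathcal{C}^s(\T)$ with $s>1$ guarantees that $u,u'$ are continuous and that $M_{T,\kappa}u\in\mathcal{C}^{s-\sfrac12}(\T)$, so every $L^2$-pairing below is well defined.

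For the linear part, I would write
\[
\langle M_{T,\kappa}u, u'\rangle = \langle u, M_{T,\kappa}u'\rangle = \langle u, (M_{T,\kappa}u)'\rangle = -\langle u', M_{T,\kappa}u\rangle,
\]
where the first equality uses self-adjointness of $M_{T,\kappa}$, the second that Fourier multipliers commute with $\partial_x$, and the third skew-adjointness of $\partial_x$. Hence $\langle M_{T,\kappa}u,u'\rangle=0$. The remaining two contributions are even simpler: $\langle cu, u'\rangle = \tfrac{c}{4\pi}\int_0^{2\pi} (u^2)'\,dx = 0$ and $\langle u^2, u'\rangle = \tfrac{1}{6\pi}\int_0^{2\pi}(u^3)'\,dx = 0$, both by the fundamental theorem of calculus on the circle.

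Conceptually, there is a slicker route that I would mention as motivation: define the Hamiltonian
\[
\mathcal{H}(u) \coloneqq \tfrac{1}{2}\langle u, M_{T,\kappa}u\rangle - \tfrac{c}{2}\|u\|_{L^2(\T)}^2 + \tfrac{1}{3}\cdot\tfrac{1}{2\pi}\int_0^{2\pi}u^3\,dx,
\]
so that $J(u)$ is its $L^2$-gradient. Since $\mathcal{H}$ is invariant under the translation action $u\mapsto u(\cdot+a)$, differentiating the identity $\mathcal{H}(u(\cdot+a)) = \mathcal{H}(u)$ at $a=0$ yields $\langle J(u), u'\rangle = 0$; this is precisely the Noether-type observation underlying Corollary \ref{corollary: JOfUIsOrthogonalToVPrime}. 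There is no real obstacle here — the statement is essentially a one-line calculation — but the proof is worth recording because it is exactly the structural ingredient that makes the two scalar equations in \eqref{eq: theSineSpace} linearly dependent, and hence makes the Lyapunov--Schmidt reduction succeed with only three (rather than four) free parameters.
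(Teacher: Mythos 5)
Your direct calculation is correct and takes a genuinely different route from the paper. The paper introduces the functional $\mathcal{J}(u)=\frac{1}{2\pi}\int_0^{2\pi}\bigl(\tfrac{1}{2}u(M_{T,\kappa}-c)u+\tfrac{1}{3}u^3\bigr)\,dx$, verifies $D_u\mathcal{J}(u)v=\langle J(u),v\rangle$, and obtains the orthogonality by differentiating the translation-invariant quantity $\mathcal{J}(u(\cdot+a))$ at $a=0$ — precisely the ``slicker route'' you sketch at the end. Your primary argument instead decomposes $J(u)$ termwise, killing the quadratic and cubic pieces by the fundamental theorem of calculus and the linear piece by combining self-adjointness of $M_{T,\kappa}$ with skew-adjointness of $\partial_x$; this is more elementary and dispenses with the need to introduce the functional at all. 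The one place where your proof is incomplete is the regularity bookkeeping in the chain $\langle u,(M_{T,\kappa}u)'\rangle=-\langle u',M_{T,\kappa}u\rangle$: this integration by parts needs $M_{T,\kappa}u$ to be (at least weakly) differentiable, i.e. $M_{T,\kappa}u\in\mathcal{C}^{s-1/2}(\T)$ with $s-\tfrac{1}{2}>1$, so the computation as written only covers $s>\tfrac{3}{2}$. The paper runs into exactly the same restriction and closes the gap with a one-line density argument to extend to $s\in(1,\tfrac{3}{2}]$; you should append the same remark (or verify the identity by pairing Fourier series and invoking evenness of $m_T$, which also sidesteps the issue). Beyond that, your proof is sound and your final comment correctly identifies why this proposition is the structural lever that makes the two equations \eqref{eq: theSineSpace} linearly dependent.
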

\begin{proof}
    Suppose first $u\in \mathcal{C}^s(\T)$ with $s>\frac{3}{2}$. We can leverage the variational structure of $J$ by introducing the functional
\begin{equation*}
		\mathcal{J}(u;c,\kappa,T)\coloneqq \frac{1}{2\pi}\int_0^{2\pi} \left( \frac{u( M_{T,\kappa}-c) u}{2}+\frac{u^3}{3}\right) dx   . 
\end{equation*}
As $M_{T,\kappa}-c$ is a symmetric operator, the Fréchet derivative of $\J$ with respect to $u$ in $C^{s-1}(\T)$ is given by
\begin{equation*}
	D_u \mathcal{J}(u)v=\frac{1}{2\pi}\int_0^{2\pi} \Big((M_{T,\kappa}-c)u+u^2\Big)v\, dx=
	\langle J(u),v\rangle
\end{equation*}
for any $v\in \mathcal{C}^{s-1}(\T)$. And since the limit $\lim_{\varepsilon\to 0}\frac{u(\cdot+\varepsilon)-u}{\varepsilon}= u'$ holds in $\mathcal{C}^{s-1}(\T)$, we get
\begin{equation*}
\langle J(u),u'\rangle=\frac{d}{da}\mathcal{J}(u(\cdot + a))\Big|_{a=0} =0,
\end{equation*}
where the latter equality holds as $\mathcal{J}$ is translation invariant. The proposition extends to $s\in(1,\tfrac{3}{2}]$ by a density argument.
\end{proof}

\begin{corollary}\label{corollary: JOfUIsOrthogonalToVPrime}
    Let $c,\kappa,T>0$ and let $u\in \mathcal{C}^s(\R)$, where $s>1$. Decompose $u$ into $v\coloneqq P_V u$ and $w\coloneqq P_W u$, and suppose that \cref{eq: equationForW} is satisfied. Then $J(u;c,\kappa,T)$ is orthogonal to $v'=\partial_x v$.
\end{corollary}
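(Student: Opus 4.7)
The plan is to deduce the corollary from the preceding proposition $\langle J(u),u'\rangle=0$ by splitting $u'=v'+w'$ and arguing separately that $\langle J(u),w'\rangle=0$. Writing
\begin{equation*}
\langle J(u),v'\rangle = \langle J(u),u'\rangle - \langle J(u),w'\rangle,
\end{equation*}
the first term on the right vanishes by the previous proposition, so it suffices to show that the second term vanishes as well.

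The key observation is that equation \cref{eq: equationForW}, namely $P_W J(u)=0$, is equivalent to $J(u)\in V$. Thus I only need to check that $w'\in W$, so that $J(u)\perp w'$ in $L^2(\T)$. For this I invoke that $V$ is a finite-dimensional subspace spanned by sinusoids and is closed under $x$-translations; equivalently, on the Fourier side, $V$ corresponds to a finite set of frequencies $\{\pm k_1,\pm k_2\}$, which is clearly preserved under the Fourier multiplier associated with $\partial_x$. Hence $V$ is closed under $\partial_x$. Since $\partial_x$ is formally skew-adjoint on $L^2(\T)$, its orthogonal complement $W = V^\perp$ is also closed under $\partial_x$: for any $v\in V$ and $w\in W$,
\begin{equation*}
\langle v, \partial_x w\rangle = -\langle \partial_x v, w\rangle = 0,
\end{equation*}
because $\partial_x v\in V$ and $w\in W$. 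Therefore $w'\in W$, and combined with $J(u)\in V$ this yields $\langle J(u),w'\rangle = 0$.

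There is no real obstacle here; the argument is essentially algebraic once one notices the two ingredients, namely (i) the preceding proposition supplies $\langle J(u),u'\rangle=0$ via the translation-invariance of the functional $\mathcal{J}$, and (ii) the projection $P_W$ commutes with $\partial_x$ because $V$ (and hence $W$) is $\partial_x$-invariant. The only mild point to be careful about is the regularity: for $u\in\mathcal{C}^s(\T)$ with $s>1$ one has $u'\in\mathcal{C}^{s-1}(\T)$, so that the $L^2(\T)$ pairings $\langle J(u),u'\rangle$, $\langle J(u),v'\rangle$ and $\langle J(u),w'\rangle$ are all well-defined, and the splitting $u'=v'+w'$ takes place inside $L^2(\T)$.
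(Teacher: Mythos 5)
Your proof is correct and follows essentially the same route as the paper: both reduce to $\langle J(u),u'\rangle=0$ from the preceding proposition and then argue $\langle J(u),w'\rangle=0$ using $P_W J(u)=0$ together with $w'\in W$. The only cosmetic difference is how $w'\in W$ is justified — you deduce $\partial_x$-invariance of $V$ from its sinusoidal spanning set and transfer it to $W$ via skew-adjointness of $\partial_x$, while the paper notes directly that $W$ is closed under translations and hence under $\partial_x$ — but this is a matter of wording, not a different argument.
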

\begin{proof}
Note first that since $w'\in W^{s-1}\subset W$ (which follows as $W$ is closed under translations), and since we assume that \cref{eq: equationForW} is satisfied, we get
    \begin{equation*}
        \langle J(u),w'\rangle =  \langle P_WJ(u),w'\rangle=0.
    \end{equation*}
By the previous proposition, we then infer that
    \begin{equation*}
        0 = \langle J(u),u'\rangle = \langle J(u),v'\rangle + \langle J(u),w'\rangle=\langle J(u),v'\rangle. \qedhere
    \end{equation*}
\end{proof}

\section{Proof of the Existence Theorem}\label{sec: LocalBifurcationTheoryToSymmetryBreakingPoints}
This section is dedicated to the rigorous proof of the existence theorem under the symmetry breaking assumption from \Cref{def: symmetryBreakingBifurcationPoints}. The question of symmetry breaking is then further investigated in \Cref{sec: symmetryBreakingWavePairs}. The Lyapunov-Schmidt reduction follows in a straightforward way from the preliminary results above and most of this section concerns the finite dimensional problem. This analysis requires some technical results as the solution of the infinite dimensional problem has to be expanded to a relatively high order.

Throughout the section we consider a fixed wave number pair $1\leq k_1 < k_2$ and let $(c_0,\kappa_0,T)$ denote a corresponding double bifurcation point of \cref{eq: scaledSteadyCapillaryWhithamEquation} (Definition \ref{def: simpleAndDoubleBifrucationPoints}) where $T\in (0,\frac{1}{3})$ is considered a free parameter; by Proposition \ref{prop: c0AndKappa0AreDeterminedByK1K2AndT} both $c_0$ and $\kappa_0$ are uniquely determined by $T,k_1,k_2$. In light of Remark \ref{rem: oneMayAssumeThatK1AndK2AreRelativePrimes} we shall further assume without loss of generality that $(k_1,k_2)$ are coprime. As in Section \ref{sec: SetupAndMainResults}, we will be working with the spaces $V,W,W^s,U^s$ defined in \cref{eq: theSpaces} and $P_V$ and $P_W$ will denote the $L^2$-orthogonal projections onto $V$ and $W$ respectively.

We shall parameterize elements $v\in V$ using 
\begin{equation*}
    r=(r_1,r_2)\in [0,\infty)^2 \quad\text{ and }\quad \theta=(\theta_1,\theta_2)\in\Theta\coloneqq \big(\R/\tfrac{2\pi}{k_1}\Z\big)\times \big(\R/\tfrac{2\pi}{k_2}\Z\big),
\end{equation*}
by writing  
\begin{equation}\label{eq: parameterizationOfV}
    v(x;r,\theta)=r_1\cos\Big(k_1(x+\theta_1)\Big) + r_2\cos\Big(k_2(x+\theta_2)\Big).
\end{equation}
Such a parameterization covers all of $V$ (see Section \ref{sec: asymmetryOfSinusoids}) and makes easy to determine whether $v$ is asymmetric or not (Proposition \ref{proposition: criterionForAsymmetryOfV}). Moreover, this parameterization allows for a useful Taylor--Fourier expansion of solutions (Lemma \ref{lemma: Taylor-FourierExpansion}) from  which the symmetry breaking criterion of Definition \ref{def: symmetryBreakingBifurcationPoints} becomes evident. Finally, for any $\theta\in \Theta$ we introduce a corresponding basis of $V=\spann\{v_{\cos}^1,v_{\cos}^2,v_{\sin}^1,v_{\sin}^2\}$ given by
\begin{align*}
    v_{\cos}^1(x;\theta)\coloneqq&\,\cos(k_1(x+\theta_1)),& v_{\cos}^2(x;\theta)\coloneqq&\, \cos(k_2(x+\theta_2))\\
    v_{\sin}^1(x;\theta)\coloneqq&\,\sin(k_1(x+\theta_1)), & v_{\sin}^2(x;\theta)\coloneqq&\,\sin(k_2(x+\theta_2)).
\end{align*}
In particular, in the above parameterization, we find that 
\begin{equation}\label{eq: twoImmediateIdentitiesByTheParameterizationOfV}
    v=r_1v_{\cos}^1 + r_2 v_{\cos}^2\quad \text{ and }\quad -v'=k_1r_1v_{\sin}^1 + k_2r_2 v_{\sin}^2.
\end{equation}
We also decompose the solution space $U^s = P_VU^s\oplus P_WU^s = V\oplus W^s$ so that any $u\in U^s$ is defined by $(v,w)\in V\times W^s$ through
\begin{align*}
    P_V u\coloneqq v(r,\theta)=r_1v_{\cos}^1+r_2v_{\cos}^2,\qquad
    P_Wu\coloneqq w.
\end{align*}
We are now ready to proceed with the Lyapunov-Schmidt reduction.
\subsection{Lyapunov--Schmidt reduction}
For convenience, we here restate the system \cref{eq: rewritingTheMainEquationIntoTwoEquations}, which is the $(P_V,P_W)$-decomposition of \cref{eq: scaledSteadyCapillaryWhithamEquation}, that reads
\begin{subequations}\label{eq: restating TheMainEquationIntoTwoEquations}
    \begin{empheq}[left={\empheqlbrace\,}]{align}
      &  P_V J(v+w,c,\kappa,T)=(M_{T,\kappa}-c)v + P_V(v+w)^2=0,
        \label{eq:lsfindim} \\
      &  P_W J(v+w,c,\kappa,T)=(M_{T,\kappa}-c)w + P_W(v+w)^2=0,
        \label{eq:lsinfdim}
    \end{empheq}
\end{subequations}
We now perform the Lyapunov--Schmidt reduction argument yielding a small solution $w\in W^s$ of the infinite dimensional problem \cref{eq:lsinfdim} for (any) given $(v,c,\kappa)\approx(0,c_0,\kappa_0)$ and $T\in(0,\frac{1}{3})$.
\begin{proposition}[Solving for $w$]\label{prop: infdimAKAuniqueWSolution} Let the integers $1\leq k_1<k_2$ be coprime. There is a zero-centered ball $B\subset W^s$ such that: For every $T\in(0,1/3)$, $\vert v\vert \ll 1$, and $(c,\kappa)$ satisfying $\vert (c-c_0,\kappa-\kappa_0)\vert\ll 1$, there exists a unique solution $w=w(v,c,\kappa,T)\in B$ to \cref{eq:lsinfdim} depending analytically on its arguments.
\end{proposition}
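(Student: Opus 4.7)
The statement is a textbook application of the analytic implicit function theorem to \cref{eq:lsinfdim} viewed as an equation for $w\in W^s$ with $(v,c,\kappa,T)\in V\times \R_+^3$ as parameters. The plan is to introduce
\[
F(w; v, c, \kappa, T) := (M_{T,\kappa}-c)w + P_W(v+w)^2,
\]
seen as a map $W^s\times V\times \R_+^3\to W^{s-\sfrac{1}{2}}$, and to verify that its $w$--linearization at the base point is an isomorphism. The target space is correct since $\mathcal{C}^s(\T)$ is a Banach algebra for $s>1$ (giving $(v+w)^2\in \mathcal{C}^s\hookrightarrow \mathcal{C}^{s-\sfrac{1}{2}}$), $P_W$ is a Fourier multiplier of order zero and hence preserves Zygmund regularity, and $M_{T,\kappa}\colon \mathcal{C}^s(\T)\to \mathcal{C}^{s-\sfrac{1}{2}}(\T)$ is bounded by the regularity estimate of \cite[Lemma 2.4]{Ehrnstroem2019} cited in the proof of \Cref{prop: theLinearizationIsFredholm}. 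Joint analyticity of $F$ follows because the dependence on $(w,v,c)$ is polynomial, while the dependence on $(\kappa,T)$ enters only through the Fourier symbol $m_T(\kappa\xi)=\sqrt{(1+T\kappa^2\xi^2)\tanh(\kappa\xi)/(\kappa\xi)}$, which is analytic in $(\kappa,T)\in \R_+^2$ with uniform symbol estimates yielding operator-norm analyticity.

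Next, I would fix an arbitrary $T_*\in (0,1/3)$ with associated $(c_0(T_*),\kappa_0(T_*))$ from \Cref{prop: c0AndKappa0AreDeterminedByK1K2AndT} and apply the analytic implicit function theorem at the base point $(0;0,c_0(T_*),\kappa_0(T_*),T_*)$. That $F$ vanishes there is immediate. The partial Fréchet derivative in $w$ at this point equals the restriction of $M_{T_*,\kappa_0(T_*)}-c_0(T_*)$ to $W^s$. By \Cref{prop: theLinearizationIsFredholm} this operator is Fredholm of index zero on $\mathcal{C}^s(\T)$ with kernel exactly $V$, so restricting to $W^s=V^\perp\cap \mathcal{C}^s(\T)$ produces an injective map into $W^{s-\sfrac{1}{2}}$. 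Since $M_{T,\kappa}-c$ is a real Fourier multiplier it is $L^2$-self-adjoint with $L^2$-range $V^\perp$, which combined with the index-zero information identifies the range of the restricted map as precisely $W^{s-\sfrac{1}{2}}$. Thus $D_wF$ is a Banach-space isomorphism at the base point, and the analytic IFT yields an analytic solution $w(v,c,\kappa,T)\in W^s$ defined on some neighborhood of $(0,c_0(T_*),\kappa_0(T_*),T_*)$, unique in some small ball around $0\in W^s$.

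Since the argument applies verbatim at every $T_*\in (0,1/3)$ and the local uniqueness statement of the IFT forces the resulting patches to agree on overlaps, they glue into a single analytic map $w$ defined on a neighborhood of the bifurcation curve $\{(0,c_0(T),\kappa_0(T),T):T\in (0,1/3)\}$ in $V\times \R_+^3$. A common zero-centered ball $B\subset W^s$ in which $w$ is unique is obtained by taking the intersection of the local uniqueness balls produced by the IFT (uniform on any compact subinterval of $T$, which is all one needs in the later applications where $T$ is confined to a neighborhood of a fixed symmetry-breaking $T_0$). I do not foresee any genuine obstacle; the one step requiring a brief justification is the identification of the range of the restricted linearization with $W^{s-\sfrac{1}{2}}$, and that drops out of the self-adjointness of Fourier multipliers together with the index-zero Fredholm assertion of \Cref{prop: theLinearizationIsFredholm}.
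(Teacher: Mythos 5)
Your proposal is correct and follows essentially the same route as the paper: both apply the analytic implicit function theorem to $P_WJ$ restricted to $W^s$ at $(0,c_0,\kappa_0,T)$, with the isomorphism property of the restricted linearization coming from \Cref{prop: theLinearizationIsFredholm}. The extra details you supply (identification of the range as $W^{s-\sfrac{1}{2}}$ and the gluing/uniformity over $T$) are points the paper leaves implicit, but they do not change the argument.
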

\begin{proof}
We can restrict $P_WJ$ to $W^s$ and consider it as an operator $P_WJ:W^s\times \mathbb{R}^2\times (0,1/3)\to W^{s-1/2}$. Clearly $P_WJ(0;c_0,\kappa_0,T)=0$ and since
\[
D_{w}(P_WJ)(0;c_0,\kappa_0,T)=P_W(D_uJ)(0;c_0,\kappa_0,T)\vert_{W_s},
\]
which is an isomorphism by Proposition \ref{prop: theLinearizationIsFredholm}, we can apply the analytic implicit function theorem \cite[Theorem 4.5.4]{Buffoni_2003} to obtain $w(v,c,\kappa,T)$ for all $T\in (0,1/3)$, $\vert r\vert\ll 1$ and $(c,\kappa)$ sufficiently close to $(c_0,\kappa_0)$.
\end{proof}

\subsection{The finite dimensional problem}It remains to solve \cref{eq:lsfindim} with $w=w(v,c,\kappa,T)$ being the solutions of \cref{eq:lsinfdim} from \Cref{prop: infdimAKAuniqueWSolution}.
Defining the \emph{reduced operator}
\begin{equation}\label{eq: theReducedEquation}
    I(r,\theta,c,\kappa,T)\coloneqq J\Big(v(r,\theta)+w\big(v(r,\theta),c,\kappa,T\big);c,\kappa,T\Big)
\end{equation}
we can write \cref{eq:lsfindim} (with $w$ from Proposition \ref{prop: infdimAKAuniqueWSolution}) as the four scalar equations
   \begin{subequations}\label{eq: finalSystem}
    \begin{empheq}[left={\empheqlbrace\,}]{align}
      &  \big\langle I(r,\theta,c,\kappa,T),v_{\cos}^1\big\rangle=0, \label{eq:findimEq1}\\
      & \big\langle I(r,\theta,c,\kappa,T),v_{\cos}^2\big\rangle=0, \label{eq:findimEq2}\\
       & \big\langle I(r,\theta,c,\kappa,T),v_{\sin}^1\big\rangle=0,\label{eq:findimEq3}\\
       & \big\langle I(r,\theta,c,\kappa,T),v_{\sin}^2\big\rangle=0,\label{eq:findimEq4}
    \end{empheq}
\end{subequations}
    since $v_{\cos}^1$, $v_{\sin}^1$, $v_{\cos}^2$, and $v_{\sin}^2$ form a basis for $V$.
However, \cref{eq:findimEq3} and \cref{eq:findimEq4} are equivalent: By \Cref{corollary: JOfUIsOrthogonalToVPrime} and \cref{eq: twoImmediateIdentitiesByTheParameterizationOfV}
we have the relation
\begin{equation}\label{eq: theLinearDependencyOfTheTwoSinEquationsWrittenOutForTheFiniteDimensionalProblem}
    r_1k_2\langle I,v_{\sin}^1\rangle+r_2k_2\langle I,v_{\sin}^2\rangle=0.
\end{equation}
    Thus, at least when $r_1r_2\neq0$, we conclude that $\langle I,v_{\sin}^1\rangle=0\Longleftrightarrow\langle I,v_{\sin}^2\rangle=0$; in fact, this equivalence holds also when $r_1r_2=0$ as can be seen by \Cref{proposition:lastfineq}.
    
   Seeking to solve \cref{eq: finalSystem}, we shall first construct $c=c(r,\theta,T)$ and $\kappa=\kappa(r,\theta,T)$ such that \cref{eq:findimEq1,eq:findimEq2} are solved for any $\vert r\vert\ll 1$, $\theta \in \Theta$ and $T\in(0,1/3)$. Then, substituting for $c$ and $\kappa$ in \cref{eq:findimEq3} leaves us with one remaining equation in the variables $(r,\theta,T)$. This final equation is trivially satisfied for symmetric $v$ (a consequence of Proposition \ref{prop: UIsSymmetricIfAndOnlyIfVIs}) but often impossible to solve for asymmetric $v$. To accurately identify when it \textit{can} be solved, we will use an appropriate power-expansion of the equation. As this expansion will also be useful for solving \cref{eq:findimEq1,eq:findimEq2}, we proceed by introducing said expansion which is naturally indexed over multi-indices:
   
   To this end we use the notation $\alpha,\beta \in \N_0^2$ for multi-indices. 
    We use the multi-index convention that $|\alpha|=|\alpha_1|+|\alpha_2|$ and $r^{\alpha}=r_1^{\alpha_1}r_2^{\alpha_2}$. We also introduce 
    \begin{equation}\label{eq: DefinitionOfTheMultiindexExponential}
E\coloneqq(e^{ik_1(x+\theta_1)},e^{ik_2(x+\theta_2)}),
 \end{equation}
 so that, in particular,
    \begin{equation*}
         E^{\alpha\pm\beta} = e^{ik_1(x+\theta_1)(\alpha_1\pm\beta_1) + ik_2(x+\theta_2)(\alpha_2\pm\beta_2)}.
    \end{equation*}
    With this notation, we shall expand a solution $u=v+w$ in a Taylor-Fourier series; the series obtained by first expanding $u$ as a Taylor series in $r$ and then expanding every Taylor-coefficient in its Fourier series. Conveniently, using that $u$ solves \cref{eq: scaledSteadyCapillaryWhithamEquation} we get a recursion relation for the coefficients in this expansion. This recursion is easily expressed with the help of the operator
\begin{equation}\label{eq: theDefinitionOfL}
     L\coloneqq (c-M_{T,\kappa})^{-1}P_W
\end{equation}
    which is a Fourier multiplier with symbol
\begin{equation}\label{eq: theDefinitionOfTheMultiplierEll}
    \ell(k)=\left\{
                \begin{aligned}
                    &\left(c-m_{T}(\kappa k)\right)^{-1}\qquad &&k\notin \{\pm k_1,\pm k_2\},\\
                    &0&& k\in \{\pm k_1,\pm k_2\}.
                \end{aligned}
             \right.
\end{equation}
The implicit dependence of $L$ and $\ell$ on $c,\kappa,T>0$ has here been suppressed, but note that the objects are well defined well-defined whenever $(c,\kappa)\approx (c_0,\kappa_0)$. 
    \begin{lemma}[Taylor-Fourier expansion of the solution]\label{lemma: Taylor-FourierExpansion}
        Let $(u,c,\kappa,T)\in U^s\times\R^{3}_+$ be a solution of \cref{eq: scaledSteadyCapillaryWhithamEquation} near the bifurcation point $(0,c_0,\kappa_0,T)$.
        Then there are coefficients $\hat{u}_{\alpha,\beta}\in \R$ indexed over multi-indices $\alpha,\beta\in \N_0^2$ and dependent on only $c,\kappa,T$ such that 
        \[
            u=v+w=\sum_{\vert\alpha\vert+\vert\beta\vert\geq 1} \hat{u}_{\alpha,\beta}r^{\alpha+\beta}E^{\alpha-\beta},
        \]
       where $E$ is as in \cref{eq: DefinitionOfTheMultiindexExponential}. The corresponding coefficients for $u^2$ will be denoted $\widehat{u^2}_{\alpha,\beta}$ so that
\begin{equation}\label{eq:u2expansion}
            u^2=(v+w)^2=\sum_{\vert\alpha\vert+\vert\beta\vert\geq 2} \widehat{u^2}_{\alpha,\beta}r^{\alpha+\beta}E^{\alpha-\beta}.
        \end{equation}
        The coefficients are symmetric in their indices, so that $\hat{u}_{\alpha,\beta}=\hat{u}_{\beta,\alpha}$ and $\widehat{u^2}_{\alpha,\beta}=\widehat{u^2}_{\beta,\alpha}$.
    \end{lemma}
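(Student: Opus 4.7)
\medskip

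\noindent\textbf{Proof plan.} Since $w=w(v,c,\kappa,T)$ from \Cref{prop: infdimAKAuniqueWSolution} depends analytically on $v$, and $v(r,\theta)$ depends analytically (indeed polynomially) on $r$, the map $r\mapsto u=v+w$ is analytic at $r=0$ and hence admits a convergent Taylor expansion $u=\sum_{\gamma\in\N_0^2}r^\gamma u_\gamma$ with $u_0=0$. The plan is to prove by induction on $|\gamma|$ that each Taylor coefficient has the Fourier structure
\[
   u_\gamma(x;\theta,c,\kappa,T)=\sum_{\alpha+\beta=\gamma}\hat u_{\alpha,\beta}\,E^{\alpha-\beta},
\]
with real coefficients $\hat u_{\alpha,\beta}=\hat u_{\beta,\alpha}$. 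The key tool is to recast \cref{eq:lsinfdim} as the fixed point relation $w=L u^2$, where $L$ is the Fourier multiplier defined by \cref{eq: theDefinitionOfL,eq: theDefinitionOfTheMultiplierEll}; this is well-defined since $(c,\kappa)\approx(c_0,\kappa_0)$ places $\ell$ safely away from the zeroes of $c-m_T(\kappa\cdot)$ on $\Z\setminus\{\pm k_1,\pm k_2\}$. Combining with $u=v+w$ yields the recursion
\begin{equation*}
   u=v+L u^2,
\end{equation*}
which at the level of Taylor coefficients reads $u_\gamma=v_\gamma$ for $|\gamma|=1$ and $u_\gamma=L\bigl(\sum_{\gamma_1+\gamma_2=\gamma}u_{\gamma_1}u_{\gamma_2}\bigr)$ for $|\gamma|\ge 2$.

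\medskip

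For the base case $|\gamma|=1$, writing the cosines in $v$ as $\cos(k_j(x+\theta_j))=\tfrac12(E_j+E_j^{-1})$ gives
$u_{(1,0)}=\tfrac12(E^{(1,0)}+E^{-(1,0)})$ and $u_{(0,1)}=\tfrac12(E^{(0,1)}+E^{-(0,1)})$, which fits the stated form via the two pairs $(\alpha,\beta)\in\{((1,0),(0,0)),((0,0),(1,0))\}$ (and similarly for $(0,1)$). For the inductive step, assume the statement holds for all multi-indices of size less than $|\gamma|$. Then for any decomposition $\gamma=\gamma_1+\gamma_2$ with $|\gamma_i|\ge 1$,
\[
   u_{\gamma_1}u_{\gamma_2}=\sum_{\alpha_1+\beta_1=\gamma_1}\sum_{\alpha_2+\beta_2=\gamma_2}\hat u_{\alpha_1,\beta_1}\hat u_{\alpha_2,\beta_2}\,E^{(\alpha_1+\alpha_2)-(\beta_1+\beta_2)},
\]
and reindexing $\alpha=\alpha_1+\alpha_2$, $\beta=\beta_1+\beta_2$ yields a sum of terms $E^{\alpha-\beta}$ with $\alpha+\beta=\gamma_1+\gamma_2=\gamma$. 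Summing over decompositions of $\gamma$ and then applying $L$, which is a Fourier multiplier and hence maps each $E^{\alpha-\beta}$ to a scalar multiple of itself, preserves the structure and delivers $u_\gamma$ in the claimed form. Defining $\hat u_{\alpha,\beta}$ via this recursion and reassembling $u=\sum_\gamma r^\gamma u_\gamma$ proves the expansion. The corresponding expansion \cref{eq:u2expansion} for $u^2$ is obtained by the same Cauchy-product reindexing, starting at $|\alpha|+|\beta|\ge 2$ because $u$ itself starts at order one.

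\medskip

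Finally, symmetry $\hat u_{\alpha,\beta}=\hat u_{\beta,\alpha}$ follows from two observations. First, at order $|\gamma|=1$ the coefficients are $1/2$, hence real and symmetric. Second, the recursion uses only products of real coefficients and applications of $L$, whose Fourier symbol $\ell$ is real-valued (as $m_T$ is). Therefore every $\hat u_{\alpha,\beta}$ is real. Since $u$ is real-valued and $\overline{E^{\alpha-\beta}}=E^{\beta-\alpha}$, matching Fourier coefficients at a fixed order $r^\gamma$ forces the coefficient of $E^{\beta-\alpha}$ to be the complex conjugate of that of $E^{\alpha-\beta}$; combined with realness this gives $\hat u_{\alpha,\beta}=\hat u_{\beta,\alpha}$, and the same argument applied to $u^2$ (also real) yields $\widehat{u^2}_{\alpha,\beta}=\widehat{u^2}_{\beta,\alpha}$. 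The main conceptual step is to notice the closure of the index set under the quadratic nonlinearity (additivity of $\alpha$ and $\beta$ under multiplication of $E^{\alpha-\beta}$ terms); once this is recognised, the argument is essentially bookkeeping, and no genuine obstacle arises.
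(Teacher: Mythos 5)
Your proposal is correct and follows essentially the same route as the paper: recast \cref{eq:lsinfdim} as $w=Lu^2$ so that $u=v+Lu^2$, start from the order-one coefficients $\tfrac12$ coming from the cosine representation of $v$, and propagate the form $\sum_{\alpha+\beta=\gamma}\hat u_{\alpha,\beta}E^{\alpha-\beta}$ inductively through the Cauchy product and the Fourier multiplier $L$, exactly as in the paper's recursion \cref{eq:urecursive2,eq:u2recursive}. The only (minor) divergence is the symmetry step: the paper gets $\hat u_{\alpha,\beta}=\hat u_{\beta,\alpha}$ directly by induction from the symmetric base case and the evenness of $\ell$, whereas your conjugation/coefficient-matching argument is fine provided the matching is done for the functions $E^{\alpha-\beta}$ of $(x,\theta)$ jointly rather than by $x$-frequency alone, since distinct differences $\alpha-\beta$ can produce the same $x$-frequency (e.g.\ $(k_2,-k_1)$ and $(0,0)$ both give frequency $0$).
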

    \begin{proof}
    The first four coefficients, which corresponds to $v=v(r,\theta)$, are given by 
        \[
            \hat{u}_{(1,0),(0,0)}=\hat{u}_{(0,0),(1,0)}=\hat{u}_{(0,1),(0,0)}=\hat{u}_{(0,0),(0,1)}=\frac{1}{2},
        \]
    as follows from \cref{eq: parameterizationOfV} and the identity $\cos(kx)=\tfrac{1}{2}(e^{ikx}+e^{-ikx})$. Using then that
        \begin{equation}\label{eq:urecursive}
            w=L(v+w)^2,
        \end{equation}
       where $L$ is given by \cref{eq: theDefinitionOfL}, we can compute the rest of the coefficients recursively:
        \[
            \hat{u}_{\alpha,\beta}E^{\alpha-\beta}=L\sum_{\substack{\alpha'+\alpha''=\alpha\\\beta'+\beta''=\beta}} \hat{u}_{\alpha',\beta'}\hat{u}_{\alpha'',\beta''}E^{\alpha-\beta},
        \]
        or equivalently
        \begin{equation}\label{eq:urecursive2}
            \hat{u}_{\alpha,\beta}=\sum_{\substack{\alpha'+\alpha''=\alpha\\\beta'+\beta''=\beta}} \hat{u}_{\alpha',\beta'}\hat{u}_{\alpha'',\beta''}\ell\big(k_1(\alpha_1-\beta_1)+k_2(\alpha_2-\beta_2)\big),
        \end{equation}
        where the multiplier $\ell$ is defined in \cref{eq: theDefinitionOfTheMultiplierEll}.
    The coefficients $\widehat{u^2}_{\alpha,\beta}$ can then be computed from the formula
        \begin{equation}\label{eq:u2recursive}
\widehat{u^2}_{\alpha,\beta}=\sum_{\substack{\alpha'+\alpha''=\alpha\\\beta'+\beta''=\beta}} \hat{u}_{\alpha',\beta'}\hat{u}_{\alpha'',\beta''}.
        \end{equation}

        The symmetry of the coefficients follow from the symmetry of $\hat{u}_{(1,0),(0,0)}$, $\hat{u}_{(0,0),(1,0)}$, $\hat{u}_{(0,1),(0,0)}$ and $\hat{u}_{(0,0),(0,1)}$ together with the recursion relations and the fact that $k\mapsto \ell(k)$ is an even function so that $\ell(k_1(\alpha_1-\beta_1)+k_2(\alpha_2-\beta_2))=\ell(k_1(\beta_1-\alpha_1)+k_2(\beta_2-\alpha_2))$.
    \end{proof}
    With this expansion we can show it is possible to bring out a factor $r_1$ and $r_2$ from $\langle I,v_{\cos}^1\rangle$ and $\langle I,v_{\cos}^2\rangle$ respectively, which allows us to solve the equations $\langle I,v_{\cos}^1\rangle=\langle I,v_{\cos}^2\rangle=0$ with the implicit function theorem. This factorization is only possible if we avoid the case when $k_2$ is an integer multiple of $k_1$; for re-scaled coprime wave numbers this means we must assume $k_1\neq 1$. While symmetric waves can be found for $k_1=1$ (see \cite{Ehrnstroem2019}), the presence of small asymmetrical solutions of \cref{eq: scaledSteadyCapillaryWhithamEquation} actually necessitates that $k_1>1$ (see \Cref{prop:NoSymmetryBreaking}), and so continuing with this assumption is unproblematic. 
    
    \begin{proposition}[A factorization result]\label{proposition:Psifactorization}
 	Let the integers $2\leq k_1<k_2$ be coprime and the function $I(r,\theta,c,\kappa,T)$ be as in \cref{eq: theReducedEquation}. Then there are analytic functions $\Psi_1=\Psi_1(r,\theta,c,\kappa,T)$ and $\Psi_2=\Psi_2(r,\theta,c,\kappa,T)$, such that
	\begin{align*} 
		\langle I,v_{\cos}^1\rangle=r_1\Psi_1,\\
		\langle I,v_{\cos}^2\rangle=r_2\Psi_2,
	\end{align*}
    and 
    \begin{align}
    \Psi_1(r,\theta,c_0,\kappa_0,T)&=m_{T}(\kappa k_1)-c+\mathcal{O}(\vert r\vert^2),\label{eq:Psi1rexp}\\
    \Psi_2(r,\theta,c_0,\kappa_0,T)&=m_{T}(\kappa k_2)-c+\mathcal{O}(\vert r\vert^2).\label{eq:Psi2rexp}
    \end{align}
    \end{proposition}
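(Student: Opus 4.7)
The plan is to exploit the Taylor--Fourier expansion of Lemma \ref{lemma: Taylor-FourierExpansion} applied to $u = v + w(v,c,\kappa,T)$ from Proposition \ref{prop: infdimAKAuniqueWSolution}. The recursion $w = L(v+w)^2$ used in that lemma comes solely from the projected equation $P_WJ(u)=0$, so the expansion
\[
u = \sum_{|\alpha|+|\beta|\geq 1} \hat u_{\alpha,\beta}\,r^{\alpha+\beta}\,E^{\alpha-\beta}, \qquad u^2 = \sum_{|\alpha|+|\beta|\geq 2}\widehat{u^2}_{\alpha,\beta}\,r^{\alpha+\beta}\,E^{\alpha-\beta},
\]
remains valid here, and both series converge as analytic functions of $(r,\theta,c,\kappa,T)$ near the bifurcation point. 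Substituting these into $I = (M_{T,\kappa}-c)u + u^2$ turns $\langle I, v_{\cos}^j\rangle$ into an analytic series in $(r,\theta,c,\kappa,T)$, with contributions coming only from multi-indices $(\alpha,\beta)$ whose associated Fourier frequency $k_1(\alpha_1-\beta_1) + k_2(\alpha_2-\beta_2)$ equals $\pm k_j$.

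The crux of the factorization is a short number-theoretic observation using $\gcd(k_1,k_2)=1$ and $k_1\geq 2$. For $\langle I, v_{\cos}^1\rangle$, the resonance condition $k_1(\alpha_1-\beta_1) + k_2(\alpha_2-\beta_2)=\varepsilon k_1$ with $\varepsilon=\pm 1$ rewrites as $k_2(\alpha_2-\beta_2)=k_1(\varepsilon - (\alpha_1-\beta_1))$, so coprimality forces $\alpha_2-\beta_2 = k_1 j$ and $\alpha_1 - \beta_1 = \varepsilon - k_2 j$ for some $j\in\Z$. Assuming $\alpha_1+\beta_1 = 0$ gives $\alpha_1=\beta_1=0$ and hence $k_2 j = \varepsilon$; since $k_2\geq 3$ (as $k_2>k_1\geq 2$), this has no integer solution. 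Therefore every contributing multi-index has $\alpha_1+\beta_1\geq 1$, so $r_1$ divides $r^{\alpha+\beta}$ term by term, and the quotient $\Psi_1 := \langle I, v_{\cos}^1\rangle/r_1$ is analytic. The argument for $\Psi_2$ is symmetric: $k_2 \mid (\alpha_1-\beta_1)$ and the case $\alpha_2+\beta_2=0$ forces $k_1 j = \pm 1$, which fails because $k_1\geq 2$.

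For the leading-order expansions \cref{eq:Psi1rexp} and \cref{eq:Psi2rexp}, I would compute the order-$r$ and order-$r^2$ contributions by hand. The linear part of $u$ is $v = r_1 v_{\cos}^1 + r_2 v_{\cos}^2$, contributing $r_j(m_T(\kappa k_j)-c)\|v_{\cos}^j\|_{L^2}^2$ to $\langle I, v_{\cos}^j\rangle$; the term $\langle (M_{T,\kappa}-c)w, v_{\cos}^j\rangle$ vanishes because $w\in W$ and $M_{T,\kappa}-c$ is a Fourier multiplier preserving $W$; and the order-$r^2$ contribution $\langle v^2, v_{\cos}^j\rangle$ vanishes since the Fourier spectrum of $v^2$ lies in $\{0, \pm 2k_1, \pm 2k_2, \pm(k_2\pm k_1)\}$, none of which equals $\pm k_j$ under coprimality with $k_1\geq 2$. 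Dividing by $r_j$ then yields the claimed leading-order formula with remainder $\mathcal O(|r|^2)$.

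The main obstacle is the divisibility/coprimality step; everything else is bookkeeping with an analytic power series and orthogonality of Fourier modes. It is worth noting that the hypothesis $k_1\geq 2$ is sharp: if $k_1=1$, then $k_1 j = \pm 1$ admits $j=\pm 1$, producing a contribution to $\langle I, v_{\cos}^2\rangle$ with $(\alpha_2,\beta_2)=(0,0)$ and $\alpha_1-\beta_1 = \pm k_2$, which obstructs factoring out $r_2$—consistent with the fact that the small asymmetric regime excludes $k_1=1$.
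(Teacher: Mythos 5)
Your proposal is correct and follows essentially the same route as the paper: expand $u=v+w$ and $u^2$ in the Taylor--Fourier series of Lemma \ref{lemma: Taylor-FourierExpansion}, isolate the linear contribution $r_j\bigl(m_T(\kappa k_j)-c\bigr)$, and use the coprimality/divisibility argument on the resonance condition $k_1(\alpha_1-\beta_1)+k_2(\alpha_2-\beta_2)=\pm k_j$ to show every nonzero term carries a factor $r_j$, with the vanishing of the $|\alpha|+|\beta|=2$ contributions giving the $\mathcal{O}(|r|^2)$ remainder. Your explicit observations that the expansion only needs the $P_W$-projected equation and that $k_1\geq 2$ is exactly what blocks the $\alpha_2=\beta_2=0$ resonance for $\Psi_2$ are consistent with, and slightly more detailed than, the paper's own presentation.
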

    \begin{proof} We prove the result for $\langle I,v_{\cos}^1\rangle$ and omit the proof for $\langle I,v_{\cos}^2\rangle$ as it is completely analogous. We begin by expressing $\langle I,v_{\cos}^1\rangle$ in two parts
    \begin{align*}
    \langle I,v_{\cos}^1\rangle &=\langle J(v+w),v_{\cos}^1\rangle
    =\langle D_{u}J(0;c,\kappa,T)[v+w],v_{\cos}^1\rangle
    +\langle(v+w)^2,v_{\cos}^1\rangle.
    \end{align*}
    For the first term we obtain
    \[
    \langle D_{u}J(0;c,\kappa,T)[v+w],v_{\cos}^1\rangle=r_1 (m_{T}(\kappa k_1)-c)
    \]
    because $w\in W$ so only the $r_1 v_{\cos}^1 $ term in $v$ gives a contribution. Moreover we note that $c_0=m_{T}(\kappa_0k_1)$. 
    
    For the second term we can use the expansion of $(v+w)^2$ in \cref{eq:u2expansion} to find
    \[
        \langle(v+w)^2,v_{\cos}^1\rangle=\sum_{\vert\alpha\vert+\vert\beta\vert\geq 2}r^{\alpha+\beta}\widehat{u^2}_{\alpha,\beta}\langle E^{\alpha-\beta},v_{\cos}^1\rangle.
    \]
    There is a factor $r_1$ in every nonzero term if
    \[
       \langle E^{\alpha-\beta},v_{\cos}^1\rangle=0.
    \]
    For all $\alpha$, $\beta$ such that $\alpha_1=\beta_1=0$.
    However, $\alpha_1=\beta_1=0$ and $\langle E^{\alpha-\beta},v_{\cos}^1\rangle\neq 0$ means 
    \[
        (\alpha_2-\beta_2)k_2+k_1=0\qquad\text{or}\qquad(\alpha_2-\beta_2)k_2-k_1=0,
    \]
    but either of these equations would mean $k_2$ divides $k_1$ which contradicts the fact that they are coprime and greater or equal to $2$. Similarly, for $\vert\alpha\vert+\vert\beta\vert=2$ we see that in fact we also have $\langle E^{\alpha-\beta},v_{\cos}^1\rangle=0$ giving \cref{eq:Psi1rexp}.
    \end{proof}
By Propositions \ref{proposition: criterionForAsymmetryOfV} and \ref{prop: UIsSymmetricIfAndOnlyIfVIs}, asymmetry of the solution necessitates that $r_1r_2=0$. Thus, when seeking to find asymmetric solutions, solving $\Psi_1=\Psi_2=0$ is equivalent to solving $\langle I,v_{\cos}^1\rangle=\langle I,v_{\cos}^1\rangle=0$. We next prove a factorization result for $\langle I,v_{\sin}^1\rangle$ and $\langle I,v_{\sin}^2\rangle$ as well. Note that this factorization is also valid for $k_1=1$, which allows us to later disprove the existence of asymmetric waves for $k_1=1$.

    \begin{proposition}[A second factorization result]\label{proposition:lastfineq} Let the integers $1\leq k_1<k_2$ be coprime. there exist analytic functions $\Psi_3=\Psi_3(r,\theta,c,\kappa,T)$ and $\Psi_4=\Psi_4(r,\theta,c,\kappa,T)$, such that
		\begin{align*}
			\langle I,v_{\sin}^1\rangle&=r_1^{k_2-1}r_2^{k_1}\sin\big(k_1k_2(\theta_1-\theta_2)\big)\Psi_3,\\
            \langle I,v_{\sin}^2\rangle&=r_1^{k_2}r_2^{k_1-1}\sin\big(k_1k_2(\theta_2-\theta_1)\big)\Psi_4.
		\end{align*}
    These two functions are linearly dependent $k_1\Psi_3 + k_2\Psi_4=0$, and 
    \begin{align}
        \Psi_3(r,\theta,c,\kappa,T)&=\widehat{u^2}_{(k_2-1,0),(0,k_1)}(c,\kappa,T)+\mathcal{O}(\vert r\vert^2),\label{eq:Psi3rexp}\\
        \Psi_4(r,\theta,c,\kappa,T)&=\widehat{u^2}_{(k_2,0),(0,k_1-1)}(c,\kappa,T)+\mathcal{O}(\vert r\vert^2),\label{eq:Psi4rexp}
    \end{align}
    where the two $\widehat{u^2}$-coefficients are as described by Proposition \ref{lemma: Taylor-FourierExpansion} with their $c,\kappa,T$ dependence explicitly stated. In particular, the $\theta$-dependence of $\Psi_3$ and $\Psi_4$ vanishes when $r=0$.
	\end{proposition}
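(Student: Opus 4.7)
The plan is to mirror \Cref{proposition:Psifactorization} via the Taylor--Fourier expansion from \Cref{lemma: Taylor-FourierExpansion}, with the main novelty being the extraction of an entire product $r_1^{k_2-1}r_2^{k_1}\sin(k_1k_2(\theta_1-\theta_2))$ (and its $\Psi_4$-analogue) rather than a single factor $r_i$. I would carry out the argument only for $\Psi_3$; the one for $\Psi_4$ is identical after swapping the roles of $(k_1,\theta_1)$ and $(k_2,\theta_2)$.

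First, I would split $\langle I,v_{\sin}^1\rangle$ into a linear and a quadratic part. The linear part $\langle D_uJ(0)[v+w],v_{\sin}^1\rangle$ vanishes because $(M_{T,\kappa}-c)(v+w)$ lies in $\spann\{v_{\cos}^1,v_{\cos}^2\}\oplus W$, which is orthogonal to $v_{\sin}^1$. Substituting \cref{eq:u2expansion} reduces the task to analysing
\[
\langle I,v_{\sin}^1\rangle=\sum_{|\alpha|+|\beta|\geq 2}\widehat{u^2}_{\alpha,\beta}\,r^{\alpha+\beta}\,\langle E^{\alpha-\beta},v_{\sin}^1\rangle.
\]
A direct calculation shows the inner product vanishes unless $(\alpha_1-\beta_1\pm 1)k_1+(\alpha_2-\beta_2)k_2=0$, which, together with $\gcd(k_1,k_2)=1$, forces $(\alpha_1-\beta_1,\alpha_2-\beta_2)=(mk_2\mp 1,-mk_1)$ for some $m\in\Z$; on such terms the inner product equals $\mp\tfrac{1}{2i}e^{imk_1k_2(\theta_1-\theta_2)}$.

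The next step is to pair each admissible $(\alpha,\beta)$ with $(\beta,\alpha)$. Using the symmetry $\widehat{u^2}_{\alpha,\beta}=\widehat{u^2}_{\beta,\alpha}$, each such pair collapses the two $\mp$ cases into a manifestly real term of the form $\widehat{u^2}_{\alpha,\beta}r^{\alpha+\beta}\sin(mk_1k_2(\theta_1-\theta_2))$ (and the $m=0$ terms cancel pairwise). To pull out the common factor $\sin(k_1k_2(\theta_1-\theta_2))$ across all $m$, I would invoke the Chebyshev identity $\sin(m\phi)=\sin(\phi)U_{|m|-1}(\cos\phi)$, which preserves analyticity in $\theta$. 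The monomial $r_1^{k_2-1}r_2^{k_1}$ factors out similarly, because for every admissible $(\alpha,\beta)$ one has $\alpha_1+\beta_1\geq|mk_2\mp 1|\geq k_2-1$ and $\alpha_2+\beta_2\geq|m|k_1\geq k_1$, with simultaneous equality only at $m=1$, $\alpha=(k_2-1,0)$, $\beta=(0,k_1)$; this unique minimiser furnishes the leading coefficient $\widehat{u^2}_{(k_2-1,0),(0,k_1)}$. Analyticity of $\Psi_3$ then follows from the analytic dependence established in \Cref{prop: infdimAKAuniqueWSolution}, and the $\mathcal{O}(|r|^2)$ remainder estimate is automatic from the unique-minimiser property.

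For the concluding linear dependence $k_1\Psi_3+k_2\Psi_4=0$, I would invoke \Cref{corollary: JOfUIsOrthogonalToVPrime} together with the identity $-v'=k_1r_1v_{\sin}^1+k_2r_2v_{\sin}^2$ from \cref{eq: twoImmediateIdentitiesByTheParameterizationOfV}: substituting the two factorizations into $\langle I,v'\rangle=0$ and cancelling the common factor $r_1^{k_2}r_2^{k_1}\sin(k_1k_2(\theta_1-\theta_2))$ yields the relation on the open set where this factor is nonzero, and analyticity in $(r,\theta,c,\kappa,T)$ then extends it to the entire domain. The main technical obstacle I anticipate is the combinatorial bookkeeping to verify that $r_1^{k_2-1}r_2^{k_1}$ is genuinely a common factor for \emph{all} $m\in\Z\setminus\{0\}$, including negative $m$ and large $|m|$; here the crucial inequality $|mk_2\mp 1|\geq k_2-1$ relies on $k_2\geq 2$, which is guaranteed by $1\leq k_1<k_2$ and coprimality.
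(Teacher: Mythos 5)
Your proposal follows the paper's proof closely: split off the quadratic part via orthogonality of $v_{\sin}^1$ to $(M_{T,\kappa}-c)(v+w)$, expand $(v+w)^2$ via \Cref{lemma: Taylor-FourierExpansion}, use coprimality to identify the admissible multi-indices, pair $(\alpha,\beta)$ with $(\beta,\alpha)$ to produce $\sin(mk_1k_2(\theta_1-\theta_2))$-type terms, extract $r_1^{k_2-1}r_2^{k_1}$ from the lower bounds $\alpha_1+\beta_1\geq k_2-1$, $\alpha_2+\beta_2\geq k_1$ (with unique minimiser at $m=1$), and obtain the linear dependence by substituting both factorizations into $k_1r_1\langle I,v_{\sin}^1\rangle+k_2r_2\langle I,v_{\sin}^2\rangle=0$ from \Cref{corollary: JOfUIsOrthogonalToVPrime}. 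The only cosmetic difference is that you invoke the Chebyshev identity where the paper simply posits an analytic $f_p(\theta)$; the argument is the same.
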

    \begin{proof}
       Because $v_{\sin}^1$ is $L^2(\mathbb{T})$-orthogonal to $v+w$ we find that
        \begin{align*}
            \langle I(r,\theta,c,\kappa,T),v_{\sin}^1\rangle=\langle (u+v)^2,v_{\sin}^1\rangle.
        \end{align*}
        Since
        \[
            (v+w)^2=\sum_{\vert\alpha\vert+\vert\beta\vert\geq 2} r^{\alpha+\beta}\widehat{u^2}_{\alpha,\beta}E^{\alpha-\beta}
        \]
        we consider
        \begin{align*}
            I_{\alpha,\beta}(\theta,c,\kappa,T)&\coloneqq\langle\widehat{u^2}_{\alpha,\beta}E^{\alpha-\beta},v_{\sin}^1\rangle.
        \end{align*}
        This can be written as
        \begin{align*}
            I_{\alpha,\beta}&=-\frac{\widehat{u^2}_{\alpha,\beta}\langle E^{\alpha-\beta}, e^{ik_1(\cdot+\theta_1)}\rangle-\widehat{u^2}_{\alpha,\beta}\langle E^{\alpha-\beta}, e^{-ik_1(\cdot+\theta_1)}\rangle}{2i}\\
            &=\frac{\widehat{u^2}_{\alpha,\beta}\langle E^{\alpha-\beta+(1,0)}, 1\rangle-\widehat{u^2}_{\alpha,\beta}\langle E^{\alpha-\beta-(1,0)}, 1\rangle}{2i}\\
            &\eqqcolon I_{\alpha,\beta}^+-I_{\alpha,\beta}^-.
        \end{align*}
        We see that $I_{\alpha,\beta}^+$ is nonzero only if
        \begin{align}
            (\alpha_1-\beta_1+1)k_1+(\alpha_2-\beta_2)k_2&=0.\label{eq:nm}
        \end{align}
        This equation means that we must have
        \begin{equation}\label{eq:alphabetacond+}
            \alpha_1-\beta_1+1=pk_2\quad\text{and}\quad \alpha_2-\beta_2=-pk_1\quad\text{for some }p\in\mathbb{Z}.
        \end{equation}
        Similarly we find that $I_{\alpha',\beta'}^-$ is nonzero only if
        \begin{equation}\label{eq:alphabetacond-}
            \alpha_1'-\beta_1'-1=qk_2\quad\text{and}\quad \alpha_2'-\beta_2'=-qk_1\quad\text{for some }q\in\mathbb{Z}.
        \end{equation}
        Thus $(\alpha,\beta)$ satisfies \cref{eq:alphabetacond+} if, and only if, $(\alpha',\beta')=(\beta,\alpha)$ satisfies \cref{eq:alphabetacond-} with $q=-p$. For such $\alpha,\beta$ we obtain 
        \[
            I_{\alpha,\beta}^+-I_{\beta,\alpha}^-=\frac{\widehat{u^2}_{\alpha,\beta}(e^{ipk_1k_2(\theta_1-\theta_2)}-e^{-ipk_1k_2(\theta_1-\theta_2)})}{2i}=\widehat{u^2}_{\alpha,\beta}\sin(p k_1k_2(\theta_1-\theta_2)),
        \]
        using the fact that $\widehat{u^2}_{\alpha,\beta}=\widehat{u^2}_{\beta,\alpha}$.
        Thus 
        \begin{align*}
            \langle I(r,\theta,c,\kappa,T),v_{\sin}^1\rangle &=\sum_{\vert\alpha\vert+\vert\beta\vert\geq 2} r^{\alpha+\beta}(I^+_{\alpha,\beta}-I^-_{\alpha,\beta})\\
            &=\sum_{\vert\alpha\vert+\vert\beta\vert\geq 2} r^{\alpha+\beta}(I^+_{\alpha,\beta}-I^-_{\beta,\alpha})\\
            &=\sum_{p\in\mathbb{Z}\setminus\{0\}}\sum_{\substack{\vert\alpha\vert+\vert\beta\vert\geq 2\\
                \alpha-\beta+(1,0)=p(k_2,-k_1)}}r^{\alpha+\beta}\widehat{u^2}_{\alpha,\beta}\sin(p k_1k_2(\theta_1-\theta_2)).
        \end{align*}
    In the series above we have $\alpha_1+\beta_1\geq k_2-1$ and $\alpha_2+\beta_2\geq k_1$, with equality precisely when $p=1$, $\alpha=(k_2-1,0)$, and $\beta=(0,k_1)$. Moreover, we note that for any $p\in\Z\setminus\{0\}$ there is an analytic (and bounded) function $f_p(\theta)$ such that $\sin(pk_1k_2(\theta_1-\theta_2))=\sin(k_1k_2(\theta_1-\theta_2))f_p(\theta)$,
    whence $\Psi_3$ is given by
    \begin{equation}\label{eq:Phiexplicit}
        \Psi_3(r,\theta,c,\kappa,T)=\sum_{p\in\mathbb{Z}\setminus\{0\}}\sum_{\substack{\vert\alpha\vert+\vert\beta\vert\geq 2\\
                \alpha-\beta+(1,0)=p(k_2,-k_1)}}r^{\alpha+\beta-(k_2-1,k_1)}\widehat{u^2}_{\alpha,\beta}f_p(\theta).
    \end{equation}
    This expression for $\Psi_3$ is a more precise version of \cref{eq:Psi3rexp}.
    The proof for $\langle I,v_{\sin}^2\rangle$ and $\Psi_4$ is completely analogous, and the linear dependence of $\Psi_3$ and $\Psi_4$ follows from \eqref{eq: theLinearDependencyOfTheTwoSinEquationsWrittenOutForTheFiniteDimensionalProblem}.
    \end{proof}
    From this proposition we see that $\langle I,v_{\sin}^1\rangle=\langle I,v_{\sin}^2\rangle=0$ is trivially satisfied if $r_1=0$, $r_2=0$ or $\theta_1-\theta_2\in \frac{\pi}{k_1k_2}\Z$. This is expected because these are exactly the $r$ and $\theta$ values for which $v(r,\theta)$ is symmetric and the problem reduces to the one studied in \cite{Ehrnstroem2019}. In the case when $v$ is asymmetric $\Psi_3=0$ if and only if $\Psi_4=0$ so if we show that either of these equations is satisfied it follows that $\langle I,v_{\sin}^1\rangle=\langle I,v_{\sin}^2\rangle=0$. We make the choice to solve $\Psi_3=0$. 
    
    To conclude, we will solve $(\Psi_1,\Psi_2,\Psi_3)=0$ which by the preceding factorization propositions imply that \cref{eq:lsfindim} is satisfied. This is done in two steps. First we find $c,\kappa$ such that $(\Psi_1,\Psi_2)=0$ for any $T\in (0,1/3)$, then with these solutions in hand we find $T$ such that $\Psi_3=0$.

	\begin{proposition}[Solving for $c$ and $\kappa$]\label{prop:findimpart1}
		Let the integers $2\leq k_1<k_2$ be coprime. Then for every $ T\in(0,1/3) $, $\vert r\vert\ll 1$ and $\theta\in \Theta$, there exist unique $ c=c(r,\theta,T) $ and $ \kappa=\kappa(r,\theta,T) $ satisfying
		\[
			\Psi_1(r,\theta,c,\kappa,T)=\Psi_2(r,\theta,c,\kappa,T)=0.
		\]
		Moreover, $ c $ and $ \kappa $ depends analytically on their arguments and
		\begin{equation}\label{eq:ckapparexp}
			c(0,\theta,T)=c_0+\mathcal{O}(\vert r\vert^2),\qquad \kappa(0,\theta,T)=\kappa_0+\mathcal{O}(\vert r\vert^2).
		\end{equation}
	\end{proposition}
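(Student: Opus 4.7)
The plan is to solve $\Psi_1=\Psi_2=0$ directly via the analytic implicit function theorem, treating $(c,\kappa)$ as the dependent variables and $(r,\theta,T)$ as parameters. At the base point $r=0$, the expansions \cref{eq:Psi1rexp,eq:Psi2rexp} reduce the system to
\[
m_T(\kappa k_1)-c = 0, \qquad m_T(\kappa k_2)-c = 0,
\]
which is precisely the double bifurcation condition. By \Cref{prop: c0AndKappa0AreDeterminedByK1K2AndT}, this system has the unique positive solution $(c,\kappa)=(c_0,\kappa_0)$ for every $T\in(0,\tfrac13)$, giving a natural starting point for the IFT.

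The key step is verifying that the Jacobian of $(\Psi_1,\Psi_2)$ with respect to $(c,\kappa)$ is invertible at $(r,c,\kappa)=(0,c_0,\kappa_0)$. Differentiating the expansions \cref{eq:Psi1rexp,eq:Psi2rexp} at $r=0$ yields
\[
\begin{pmatrix}\partial_c\Psi_1 & \partial_\kappa\Psi_1\\\partial_c\Psi_2 & \partial_\kappa\Psi_2\end{pmatrix}\bigg|_{(0,c_0,\kappa_0)}
=\begin{pmatrix}-1 & k_1 m_T'(\kappa_0 k_1)\\ -1 & k_2 m_T'(\kappa_0 k_2)\end{pmatrix},
\]
whose determinant is $k_1 m_T'(\kappa_0 k_1)-k_2 m_T'(\kappa_0 k_2)$. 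By \Cref{prop: c0AndKappa0AreDeterminedByK1K2AndT}, $m_T'(\kappa_0 k_1)<0<m_T'(\kappa_0 k_2)$ in the weak surface tension regime, so both terms have opposite signs and the determinant is strictly negative; in particular, nonzero. The analytic implicit function theorem (as in \cite[Theorem 4.5.4]{Buffoni_2003}) then yields a unique analytic solution $\big(c(r,\theta,T),\kappa(r,\theta,T)\big)$ for $|r|\ll 1$, any $\theta\in\Theta$, and any $T\in(0,\tfrac13)$, defined near $(c_0,\kappa_0)$.

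Finally, to establish the expansion \cref{eq:ckapparexp}, observe that \cref{eq:Psi1rexp,eq:Psi2rexp} show $\Psi_j(r,\theta,c,\kappa,T)=\Psi_j(0,\theta,c,\kappa,T)+\mathcal{O}(|r|^2)$ for $j=1,2$, so $\nabla_r\Psi_j$ vanishes identically at $r=0$. The IFT formula $\nabla_r(c,\kappa)=-(\partial_{(c,\kappa)}\Psi)^{-1}\nabla_r\Psi$ then gives $\nabla_r c(0,\theta,T)=\nabla_r\kappa(0,\theta,T)=0$, which combined with $c(0,\theta,T)=c_0$ and $\kappa(0,\theta,T)=\kappa_0$ yields the claimed $\mathcal{O}(|r|^2)$ remainders. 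No step here should present a genuine obstacle; the main subtlety is simply noting that the sign information from \Cref{lemma:mTproperties} is exactly what is needed to unlock the IFT.
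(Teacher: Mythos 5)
Your proposal is correct and follows essentially the same route as the paper: identify $(c_0,\kappa_0)$ as the solution at $r=0$ via \Cref{prop: c0AndKappa0AreDeterminedByK1K2AndT}, verify invertibility of the Jacobian $\begin{psmallmatrix}-1 & k_1 m_T'(\kappa_0 k_1)\\ -1 & k_2 m_T'(\kappa_0 k_2)\end{psmallmatrix}$ using the opposite signs of $m_T'(\kappa_0 k_1)$ and $m_T'(\kappa_0 k_2)$, and apply the analytic implicit function theorem \cite[Theorem 4.5.4]{Buffoni_2003}. The only difference is that you spell out the IFT gradient formula to justify \cref{eq:ckapparexp}, whereas the paper simply asserts it follows from \cref{eq:Psi1rexp,eq:Psi2rexp}.
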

\begin{proof}
From \Cref{proposition:Psifactorization} we see that $\Psi_1(0,\theta,c,\kappa,T)=m_{T}(\kappa k_1)-c$ and $\Psi_2(0,\theta,c,\kappa,T)=m_{T}(\kappa k_2)-c$, so
	clearly
	$ \Psi_1(0,\theta,c_0,\kappa_0,T)=\Psi_2(0,\theta,c_0,\kappa_0,T)=0 $.
	By explicit calculation we obtain
	\begin{align*}
		D_{c,\kappa}\begin{pmatrix} \Psi_1(0,\theta,c_0,\kappa_0,T)\\
		\Psi_2(0,\theta,c_0,\kappa_0,T)
		\end{pmatrix}
		&=
		\begin{pmatrix}
			D_c\Psi_1(0,\theta,c_0,\kappa_0,T) & D_\kappa\Psi_1(0,\theta,c_0,\kappa_0,T)\\
			D_c\Psi_1(0,\theta,c_0,\kappa_0,T) & D_\kappa\Psi_1(0,\theta,c_0,\kappa_0,T)
		\end{pmatrix}\\
		&=\begin{pmatrix} -1 & m'_T(\kappa_0k_1)k_1\\
		 -1 & m'_T(\kappa_0k_2)k_2
		\end{pmatrix}
	\end{align*}
		where
		\[ 
			\det\begin{pmatrix} -1 & m'_T(\kappa_0k_1)k_1\\
			-1 & m'_T(\kappa_0k_2)k_2
			\end{pmatrix}=m'_T(\kappa_0k_1)k_1-m'_T(\kappa_0k_2)k_2\neq 0,
		\]
		because $ m'_T(\kappa_0k_1) $ and $ m'_T(\kappa_0k_2) $ are both nonzero and of opposite sign by \Cref{prop: c0AndKappa0AreDeterminedByK1K2AndT}. Using the analytic implicit function theorem \cite[Theorem 4.5.4]{Buffoni_2003} we can find the desired $ c $ and $ \kappa $ and the property \cref{eq:ckapparexp} follows from \cref{eq:Psi1rexp,eq:Psi2rexp}.
	\end{proof}
Next, we wish to solve $\Psi_3=0$; for this, we make the following definition:
    \begin{definition}[Definition of $\Phi$ and $\phi$]\label{def:phidef}
         Let $c(r,\theta,T)$ and $\kappa(r,\theta,T)$ be the solutions from \Cref{prop:findimpart1} and $\Psi_3$ the function from \Cref{proposition:lastfineq}. We define the function
         \[
         \Phi(r,\theta,T)\coloneqq \Psi_3(r,\theta,c(r,\theta,T),\kappa(r,\theta,T),T),
         \]
         as well as
        \begin{equation}\label{eq:phidef}
        \phi(T;k_1,k_2)\coloneqq\Phi(0,\theta,T)= \widehat{u^2}_{(k_2-1,0),(0,k_1)}(c_0,\kappa_0,T).
    \end{equation}
    \end{definition}
    \begin{remark}
	Observe that $\phi$ depends \textit{only} on $T$ and the wave number pair $(k_1,k_2)$. This is because, by \Cref{prop: c0AndKappa0AreDeterminedByK1K2AndT}, both $c_0=c_0(T;k_1,k_2)$ and $\kappa_0=\kappa_0(T;k_1,k_2)$ are completely determined by $(k_1,k_2)$ and $T\in(0,\tfrac{1}{3})$.
	\end{remark}
    \begin{remark}\label{rem:phik1def} In the case of $k_1=1$, we can assume a solution $c(r,\theta,T)$ and $\kappa(r,\theta,T)$ to $\langle I, v^1_{\cos}\rangle=\langle I, v^2_{\cos}\rangle=0$ (see \cite{Ehrnstroem2019} for exactly when these solutions indeed exist) such that $c(0,\theta,T)=c_0$ and $\kappa(0,\theta,T)=\kappa_0$. Then $\Phi$ and $\phi$ can be defined in the same manner, which will allow us to rule out asymmetric waves in the case $k_1=1$.
    \end{remark}
    The first step to solve the final equation is to find a $T_0$ such that $\phi(T_0;k_1,k_2)=0$. If we cannot find such a $T_0$, then clearly $\Phi\neq 0$ for all $|r|\ll 1$. The next step is to extend this to some $(r,\theta)\mapsto T(r,\theta)$ such that $\Phi(r,\theta,T(r,\theta))=0$ for all $\vert r\vert \ll 1$ and $\theta$. Both of these steps can be resolved in the vicinity of a `good' root of $T\mapsto \phi(T;k_1,k_2)$ as described in \Cref{def: symmetryBreakingBifurcationPoints}. Under these good conditions, we have the following result:
	\begin{lemma}[Solving for $T$]\label{lemma:asymmetricsol} Suppose the wave number pair $2\leq k_1<k_2$ is coprime and  admits symmetry breaking in accordance with \Cref{def: symmetryBreakingBifurcationPoints} and let $(c_0,\kappa_0,T_0)$ be a corresponding symmetry breaking bifurcation point.  Then, for any $\vert r\vert \ll 1$ and $ \theta\in \Theta $ there exists $T= T(r,\theta) $ 
		\[ 
			\Phi(r,\theta,T(r,\theta))=0,
		\]
    with $T(0,\theta)=T_0$. Moreover, if the symmetry breaking bifurcation point $(c_0,\kappa_0,T_0)$ is nondegenerate, then $T$ depends analytically on its arguments and
    \begin{equation}\label{eq:Trexp}
    T=T_0+\mathcal{O}( r^2).
    \end{equation}
	\end{lemma}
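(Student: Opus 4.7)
The plan is to solve $\Phi(r,\theta,T)=0$ for $T$ as a function of $(r,\theta)$, treating the nondegenerate and degenerate cases separately. The crucial observation is a consequence of Definition \ref{def:phidef}, the expansion \cref{eq:Psi3rexp} of $\Psi_3$, and the expansions \cref{eq:ckapparexp} of $c$ and $\kappa$: substituting the latter into $\Psi_3$ yields
\[
\Phi(r,\theta,T)=\Psi_3\bigl(r,\theta,c(r,\theta,T),\kappa(r,\theta,T),T\bigr)=\phi(T;k_1,k_2)+\mathcal{O}(|r|^2),
\]
where the remainder is jointly analytic in $(r,\theta,T)$ (because $\Psi_3$, $c$, $\kappa$ are), and where the $\theta$-dependence drops out when $r=0$ in accordance with \Cref{proposition:lastfineq}. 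At $r=0$, the equation therefore reduces to $\phi(T;k_1,k_2)=0$, which is solved at $T=T_0$ by the symmetry breaking hypothesis.

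In the nondegenerate case ($\phi'(T_0;k_1,k_2)\neq 0$), the plan is to apply the analytic implicit function theorem \cite[Theorem 4.5.4]{Buffoni_2003} to the analytic map $(r,\theta,T)\mapsto \Phi(r,\theta,T)$ at the base point $(0,\theta,T_0)$. We have $\Phi(0,\theta,T_0)=\phi(T_0;k_1,k_2)=0$ and $\partial_T\Phi(0,\theta,T_0)=\phi'(T_0;k_1,k_2)\neq 0$, so the theorem supplies a unique analytic solution $T=T(r,\theta)$ near $(0,\theta)$ with $T(0,\theta)=T_0$. To obtain the sharp expansion $T=T_0+\mathcal{O}(|r|^2)$, we substitute this $T$ into the display above and Taylor-expand $\phi$ at $T_0$: since $\phi(T)=\phi'(T_0)(T-T_0)+\mathcal{O}((T-T_0)^2)$, the equation $\Phi(r,\theta,T(r,\theta))=0$ reads $\phi'(T_0)(T-T_0)+\mathcal{O}((T-T_0)^2)=\mathcal{O}(|r|^2)$, from which $T-T_0=\mathcal{O}(|r|^2)$ follows by $\phi'(T_0)\neq 0$.

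In the degenerate case, analyticity of $T$ is no longer expected, and the plan is to use an intermediate value argument instead. By strict monotonicity of $\phi$ in a neighborhood of $T_0$ together with $\phi(T_0;k_1,k_2)=0$, we may choose $T_-<T_0<T_+$ arbitrarily close to $T_0$ with $\phi(T_-;k_1,k_2)$ and $\phi(T_+;k_1,k_2)$ of opposite (nonzero) signs. Since $\Theta$ is compact and $\Phi$ is continuous, the display above ensures that for $|r|$ sufficiently small the two quantities $\Phi(r,\theta,T_\pm)$ retain the signs of $\phi(T_\pm;k_1,k_2)$ uniformly in $\theta$. The intermediate value theorem applied in $T$ then yields $T(r,\theta)\in(T_-,T_+)$ satisfying $\Phi(r,\theta,T(r,\theta))=0$; the fact that $T(0,\theta)=T_0$ (as well as uniqueness within a small enough neighborhood of $T_0$) is guaranteed by taking $T_\pm\to T_0$ and invoking strict monotonicity of $\phi$.

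The main obstacle is bookkeeping the $\mathcal{O}(|r|^2)$ error in the first display. One needs that $\Phi$ has no linear-in-$r$ correction to $\phi(T;k_1,k_2)$; this relies on two facts already at our disposal, namely the absence of order-$|r|$ terms in $\Psi_3$ (cf.\ \cref{eq:Psi3rexp}) and in the substituted $c(r,\theta,T),\kappa(r,\theta,T)$ (cf.\ \cref{eq:ckapparexp}). Once these are tracked through the composition, the rest of the argument is routine.
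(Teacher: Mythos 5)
Your proposal is correct and follows essentially the same route as the paper: the intermediate value theorem (after observing $\Phi(0,\theta,T)=\phi(T;k_1,k_2)$ and propagating signs to small $|r|$ by continuity) in the general case, and the analytic implicit function theorem applied to $\Phi$ in $T$ at $(0,\theta,T_0)$, with the expansion \cref{eq:Trexp} read off from \cref{eq:Psi3rexp} and \cref{eq:ckapparexp}, in the nondegenerate case. Your only deviation is a slight refinement: you choose the sign-change points $T_\pm$ arbitrarily close to $T_0$ using the strict monotonicity clause of Definition \ref{def: symmetryBreakingBifurcationPoints}, which pins down $T(0,\theta)=T_0$ a bit more explicitly than the paper's use of the points $T_1,T_2$ from Remark \ref{rem:phiT1T2oppositesign}.
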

	\begin{proof} By the assumptions we have a number $ T_1 $ such that $ \Phi(0,\theta,T_1)=\phi(T_1;k_1,k_2)<0 $ and a number $T_2$ such that $\Phi(0,\theta,T_2)=\phi(T_2;k_1,k_2)>0$; see \Cref{rem:phiT1T2oppositesign}. By continuity there exists some $\varepsilon>0$ such that $\Phi(r,\theta,T_1)<0$ and $\Phi(r,\theta,T_2)>0$ for all $\vert r \vert <\varepsilon$. Thus by the intermediate value theorem there exists some $T(r,\theta)$ solving $\Phi(r,\theta,T(r,\theta))=0$ for all such $r$.
	Moreover, if the symmetry breaking bifurcation point is nondegenerate, then we can directly apply the analytic implicit function theorem \cite[Theorem 4.5.4]{Buffoni_2003} to obtain analyticity of $T$ with \cref{eq:Trexp} following from \cref{eq:Psi3rexp,eq:ckapparexp}.
	\end{proof}

\subsection{Existence theorem} We are now at a point where we can prove the central existence theorem of this paper. In this theorem we will assume that we have a wave number pair that admits symmetry breaking. This technical detail is then treated separately in \Cref{sec: symmetryBreakingWavePairs} for completeness.
	\begin{theorem}\label{thm:main}
		Assume the wave number pair $ (k_1,k_2) $ satisfies $2\leq k_1< k_2$ and admits symmetry breaking. Then, for some $\varepsilon>0$, there is a mapping from $[0,\varepsilon)^2\times \Theta$ to $U^s\times\R^2_+\times (0,\tfrac{1}{3})$,
    \[
    (r,\theta)\mapsto  (u(r,\theta),c(r,\theta),\kappa(r,\theta),T(r,\theta)), 
    \]
such that $(u(r,\theta),c(r,\theta),\kappa(r,\theta),T(r,\theta))$ is a solution to \cref{eq: scaledSteadyCapillaryWhithamEquation}. Additionally, $x\mapsto u(x;r,\theta)$ is an asymmetric function if, and only if, $r_1r_2\neq 0$ and $\theta_1-\theta_2\notin \frac{\pi}{k_1k_2}\mathbb{Z}$.
		
        Moreover, if the symmetry breaking bifurcation point is nondegenerate then the solution depends analytically on $(r,\theta)$ and
        \begin{align}
        u&=r_1\cos(k_1(x+\theta_1))+r_2\cos(k_2(x+\theta_2))+\mathcal{O}( r^2),\\
        c&=c_0+\mathcal{O}( r^2),\qquad \kappa=\kappa_0+\mathcal{O}(\vert r\vert^2),\qquad T=T_0+\mathcal{O}( r^2).\label{eq:ckappaTrexp}
        \end{align}
	\end{theorem}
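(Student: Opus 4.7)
The plan is to assemble the theorem by composing the preceding propositions in reverse chronological order relative to the solution procedure: first fix $T$, then $c$ and $\kappa$, then $w$, so that each scalar equation of the Lyapunov--Schmidt system is satisfied. Concretely, for $|r|$ small and $\theta\in\Theta$, I would first invoke \Cref{lemma:asymmetricsol} to produce $T(r,\theta)$ with $\Phi(r,\theta,T(r,\theta))=0$ and $T(0,\theta)=T_0$. With this $T$ fixed, I would then apply \Cref{prop:findimpart1} to obtain $c(r,\theta):=c(r,\theta,T(r,\theta))$ and $\kappa(r,\theta):=\kappa(r,\theta,T(r,\theta))$ solving $\Psi_1=\Psi_2=0$. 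Feeding these into \Cref{prop: infdimAKAuniqueWSolution}, I get a $w(r,\theta):=w(v(r,\theta),c(r,\theta),\kappa(r,\theta),T(r,\theta))\in W^s$ such that the infinite-dimensional equation \cref{eq:lsinfdim} is satisfied. Setting $u(r,\theta):=v(r,\theta)+w(r,\theta)$ gives the desired candidate solution in $U^s$.

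Next I would verify that all four finite-dimensional equations \cref{eq: finalSystem} hold. Equations \cref{eq:findimEq1,eq:findimEq2} are immediate from the factorizations $\langle I,v_{\cos}^i\rangle=r_i\Psi_i$ of \Cref{proposition:Psifactorization} together with $\Psi_1=\Psi_2=0$ by construction. Equation \cref{eq:findimEq3} holds because \Cref{proposition:lastfineq} gives $\langle I,v_{\sin}^1\rangle = r_1^{k_2-1}r_2^{k_1}\sin(k_1k_2(\theta_1-\theta_2))\Psi_3$ and $\Psi_3=\Phi(r,\theta,T(r,\theta))=0$. Finally \cref{eq:findimEq4} follows without any additional work: the linear dependence $k_1\Psi_3+k_2\Psi_4=0$ from \Cref{proposition:lastfineq} (itself a consequence of the translation-invariant variational structure via \Cref{corollary: JOfUIsOrthogonalToVPrime}) forces $\Psi_4=0$ once $\Psi_3=0$. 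Since $V$ is spanned by $\{v_{\cos}^1,v_{\cos}^2,v_{\sin}^1,v_{\sin}^2\}$, this is enough to give $P_V J(u)=0$, completing \cref{eq:lsfindim}.

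The asymmetry characterization is then a direct combination of two preceding results: \Cref{prop: UIsSymmetricIfAndOnlyIfVIs} tells us $u$ is symmetric if and only if $v(r,\theta)$ is, and \Cref{proposition: criterionForAsymmetryOfV} says $v(r,\theta)$ is asymmetric precisely when $r_1r_2\neq 0$ and $\theta_1-\theta_2\notin\tfrac{\pi}{k_1k_2}\Z$. For the nondegenerate case, analyticity and the leading-order expansions propagate through the composition: \Cref{lemma:asymmetricsol} gives $T=T_0+\mathcal{O}(r^2)$ analytically in $(r,\theta)$, composing with the analytic $c,\kappa$ from \Cref{prop:findimpart1} and their expansions \cref{eq:ckapparexp} yields $c(r,\theta)=c_0+\mathcal{O}(r^2)$ and $\kappa(r,\theta)=\kappa_0+\mathcal{O}(r^2)$, and the analytic $w$ from \Cref{prop: infdimAKAuniqueWSolution} satisfies $w=L(v+w)^2=\mathcal{O}(|v|^2)=\mathcal{O}(|r|^2)$, so that $u=v+\mathcal{O}(r^2)$ as asserted.

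Honestly, at this stage the theorem is essentially bookkeeping; the genuine difficulties have already been absorbed by \Cref{proposition:lastfineq} (where the translation-symmetry trick produces the $k_1\Psi_3+k_2\Psi_4=0$ identity that prevents the system from being overdetermined) and by \Cref{lemma:asymmetricsol} (where the existence of the extra root of $\phi$ is exploited). The only mild subtlety to be careful about is to track the domain of validity: one must take $\varepsilon$ small enough that all three implicit function theorem applications (for $T$, for $(c,\kappa)$, and for $w$) remain simultaneously valid along the composed map, which is a standard shrinking argument.
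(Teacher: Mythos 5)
Your proposal is correct and takes essentially the same approach as the paper. The only cosmetic difference is the order in which you evaluate the composed implicit functions (you substitute $T(r,\theta)$ first, then $c,\kappa$, then $w$, whereas the paper constructs $(c,\kappa)$ as functions of $(r,\theta,T)$ before composing with $T(r,\theta)$); the resulting solution map is the same, and both versions correctly delay the nondegeneracy hypothesis to the analyticity part of the statement.
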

 
\begin{remark}\label{remark: aDescriptionOfAllSymmetricSolutions}
The solutions provided by this theorem are the ones obtained from solving $(\Psi_1,\Psi_2,\Psi_3)=0$ which, in the nondegenerate case, is uniquely solved in a neighbourhood of the bifurcation point. By the two factorization results, \Cref{proposition:Psifactorization,proposition:lastfineq}, this system is equivalent to the original \eqref{eq: finalSystem} in the asymmetric case where $r_1r_2\neq 0$ and $\theta_1-\theta_2\notin \frac{\pi}{k_1k_2}\mathbb{Z}$ (\Cref{proposition: criterionForAsymmetryOfV}), but this is no longer true when seeking symmetric waves: 

  For each of the three cases $r_1=0$, $r_2=0$, and $\theta_1-\theta_2\in\frac{\pi}{k_1k_2}\Z$, some of the equations in \eqref{eq: finalSystem} are trivially satisfied (by said factorization results) which frees up some of the bifurcation parameters. By then making appropriate changes to the above argument we end up with three additional four-dimensional manifolds, consisting of symmetric solutions of \eqref{eq: scaledSteadyCapillaryWhithamEquation}, that intersects the one from \Cref{thm:main} at $r_1=0$, $r_2=0$, and $\theta_1-\theta_2\in\frac{\pi}{k_1k_2}\Z$ respectively. These three manifolds are also characterized in \cite[Theorem 4.1]{Ehrnstroem2019} (where they are two-dimensional as neither $x$-translations nor the surface tension are included as bifurcation parameters). Thus, our result extends this picture by demonstrating the occasional presence of a fourth manifold consisting (mostly) of asymmetric solutions.
	\end{remark}

\begin{proof} As noted at the beginning of this section, \cref{eq: scaledSteadyCapillaryWhithamEquation} is equivalent to \cref{eq:lsinfdim,eq:lsfindim}. For any $\vert r\vert \ll 1$, $\theta\in\Theta$, $\vert (c-c_0,\kappa-\kappa_0)\vert \ll 1$ and $T\in (0,1/3)$ we can find solution $w=w(v(r,\theta),c,\kappa,T)$ to \cref{eq:lsinfdim} by \Cref{prop: infdimAKAuniqueWSolution}. Now it remains to solve \cref{eq:lsfindim} with the solution to \cref{eq:lsinfdim} substituted into the equation. By \Cref{corollary: JOfUIsOrthogonalToVPrime,proposition:Psifactorization,proposition:lastfineq} it is sufficient to solve $(\Psi_1,\Psi_2,\Psi_3)=0$. We solve these remaining equations in two steps to avoid requiring the assumption that the symmetry breaking bifurcation point is nondegenerate in this theorem. It is clear that we can find a solution $(c,\kappa)=(c(r,\theta,T),\kappa(r,\theta,T))$ to $(\Psi_1,\Psi_2)=0$ by \Cref{prop:findimpart1} for every $\vert r\vert \ll 1$, $\theta\in\Theta$ and $T\in(0,1/3)$. With this solution we can define $\Phi$ and find a solution $T(r,\theta)$ to  $\Phi=0$ by \Cref{lemma:asymmetricsol} for any $\vert r\vert \ll 1$ and $\theta\in\Theta$. Thus we obtain a solutions to \cref{eq: scaledSteadyCapillaryWhithamEquation} for any $\vert r\vert \ll 1$ and $\theta\in\Theta$. Now $v=P_V u$ is asymmetric by \Cref{proposition: criterionForAsymmetryOfV} if and only if $r_1\neq 0$, $r_2\neq 0$ and $\theta_1-\theta_2\notin \frac{\pi}{k_1k_2}\mathbb{Z}$ hence so is $u$ by \Cref{prop: UIsSymmetricIfAndOnlyIfVIs}.

    In the nondegenerate case a more direct proof is possible. Since
    \[
    \Psi_1(0,\theta,c_0,\kappa_0,T_0)=\Psi_2(0,\theta,c_0,\kappa_0,T_0)=\Psi_3(0,\theta,c_0,\kappa_0,T_0)=0
    \]
    and
    \[
    \det\left(\frac{\partial(\Psi_1,\Psi_2,\Psi_3)}{\partial(c,\kappa,T)}(c_0,\kappa_0,T_0)\right)=\partial_T\phi(T_0;k_1,k_2)\neq0,
    \]
    we can apply the analytic implicit function theorem \cite[Theorem 4.5.4]{Buffoni_2003} to $(\Psi_1,\Psi_2,\Psi_3)$ immediately and obtain the desired result with \cref{eq:ckappaTrexp} following from \cref{eq:Psi1rexp,eq:Psi2rexp,eq:Psi3rexp}.
	\end{proof}

\section{On wave number pairs that admit symmetry breaking}\label{sec: symmetryBreakingWavePairs}

        This section is concerned with the final piece of the puzzle, namely, if any wave number pair $(k_1,k_2)$ in fact admits symmetry breaking. This is resolved explicitly for $(2,5)$, which is (as we shall prove) the smallest pair that admits symmetry breaking. The heavily computational method used can be implemented numerically and some results in this direction are also presented towards the end of the section. In particular, numerical evidence suggests there are infinitely many wave number pairs that admit symmetry breaking.
        
        By using \cref{eq:phidef} and the recursive relations for $\hat{u}_{\alpha,\beta}$ and $\widehat{u^2}_{\alpha,\beta}$ in \cref{eq:urecursive2,eq:u2recursive} we can, in theory, compute $\phi$ for any $(k_1,k_2)$. However, the number of terms grows very fast as $k_1,k_2$ increases (see $N$ from Remark \ref{rem: ExactExpressionForNAndM}) and the $T$ dependence through $c_0$ and $\kappa_0$ is still implicit in this expression, which is why we will consider limits as $T\downarrow 0$ and $T\uparrow \frac{1}{3}$. Instead of taking the limit of $\phi$ directly, which diverges to $\pm \infty$, we will `normalize' by dividing with a positive function given by $(\ell(k_2+1))^{k_2+k_1-3}$ and consider the signs of
        \begin{equation}\label{eq:phiLims}
            \lim_{T\downarrow 0}\frac{\phi(T;k_1,k_2)}{(\ell(k_2+1))^{k_2+k_1-3}},\qquad\text{and}\qquad\lim_{T\uparrow \frac{1}{3}}\frac{\phi(T;k_1,k_2)}{(\ell(k_2+1))^{k_2+k_1-3}}.
        \end{equation}
        If the signs are different $\phi(T;k_1,k_2)$ changes sign an odd number of times on $(0,1/3)$ and thus $(k_1,k_2)$ admits symmetry breaking. While this method is computationally simpler than actually finding the $T$ dependence of $c_0$ and $\kappa_0$ it does not allow us to determine whether $(k_1,k_2)$ is symmetry breaking if $\phi$ changes sign an even number of times on $(0,1/3)$ nor can we determine whether the corresponding symmetry breaking bifurcation point is degenerate or not. To be able to compute the limits we require the following lemma.
        \begin{lemma}[Normalization for limit surface tensions]\label{lemma:l-limits} Let $\ell$ be as defined in \eqref{eq: theDefinitionOfTheMultiplierEll}. For integers $n_1,n_2\notin\{\pm k_1, \pm k_2\}$, the relative size of $\ell(n_1)$ and $\ell(n_2)$ as $T\to \{0,\frac{1}{3}\}$ is given by 
            \begin{align*}
            \lim_{T\downarrow 0}\frac{\ell(n_1)}{\ell(n_2)}&=\frac{\sqrt{k_1+k_2}-\sqrt{\frac{k_1k_2}{n_2}+n_2}}{\sqrt{k_1+k_2}-\sqrt{\frac{k_1k_2}{n_1}+n_1}}, & \lim_{T\uparrow \frac{1}{3}}\frac{\ell(n_1)}{\ell(n_2)}&=\frac{n_2^2(k_1^2+k_2^2)-k_1^2k_2^2-n_2^4}{n_1^2(k_1^2+k_2^2)-k_1^2k_2^2-n_1^4}.
            \end{align*}
        \end{lemma}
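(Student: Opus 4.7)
The plan is to express the ratio directly from the definition \cref{eq: theDefinitionOfTheMultiplierEll} as
\[
\frac{\ell(n_1)}{\ell(n_2)}=\frac{c_0-m_T(\kappa_0 n_2)}{c_0-m_T(\kappa_0 n_1)},
\]
so the question reduces to asymptotic expansion of $m_T(\kappa_0 n)$ in each of the two regimes. Crucially, the leading orders of $c_0$ and $m_T(\kappa_0 n)$ always agree by the bifurcation identity $c_0=m_T(\kappa_0 k_1)=m_T(\kappa_0 k_2)$, so one has to track terms \emph{one order beyond} the leading one.

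For $T\downarrow 0$ I would use that $\kappa_0 n\to\infty$ (since $\kappa_0\sim(k_1 k_2 T)^{-1/2}$) together with $\tanh\xi=1+\mathcal{O}(e^{-2\xi})$, giving
\[
m_T(\kappa_0 n)^{2}=\frac{1}{\kappa_0 n}+T\kappa_0 n+\text{exp.\,small}=\sqrt{\tfrac{T}{k_1 k_2}}\Bigl(\tfrac{k_1 k_2}{n}+n\Bigr)(1+o(1)).
\]
Hence $m_T(\kappa_0 n)=T^{1/4}(k_1 k_2)^{-1/4}\sqrt{k_1 k_2/n+n}+o(T^{1/4})$, and the special case $n=k_1$ reproduces the asymptotic for $c_0$ from Proposition \ref{prop: c0AndKappa0AreDeterminedByK1K2AndT}. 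Subtracting and forming the ratio, the common $T^{1/4}(k_1 k_2)^{-1/4}$ factors cancel and the stated limit follows; the denominator is nonzero precisely because $n_1\notin\{\pm k_1,\pm k_2\}$ (using evenness of $m_T$ to reduce to $n>0$).

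For $T\uparrow\tfrac{1}{3}$ the situation is reversed: $\kappa_0 n\to 0$. Setting $\varepsilon=\tfrac{1}{3}-T$ and Taylor expanding,
\[
m_T(\xi)^{2}=1-\varepsilon\xi^{2}+\tfrac{2-5T}{15}\xi^{4}+\mathcal{O}(\xi^{6}),
\]
and inserting $\kappa_0^{2}=\tfrac{45\varepsilon}{k_1^{2}+k_2^{2}}(1+o(1))$ from Proposition \ref{prop: c0AndKappa0AreDeterminedByK1K2AndT}, both remaining terms contribute at order $\varepsilon^{2}$; noting also $\tfrac{2-5T}{15}=\tfrac{1}{45}+\mathcal{O}(\varepsilon)$ and collecting gives
\[
m_T(\kappa_0 n)^{2}=1+\frac{45\varepsilon^{2}}{(k_1^{2}+k_2^{2})^{2}}\,n^{2}\bigl(n^{2}-(k_1^{2}+k_2^{2})\bigr)+\mathcal{O}(\varepsilon^{3}).
\]
Using the same expansion at $n=k_1$ to identify $c_0^{2}$, forming the difference $c_0^{2}-m_T(\kappa_0 n)^{2}$, and dividing by $c_0+m_T(\kappa_0 n)=2+\mathcal{O}(\varepsilon^{2})$ yields
\[
c_0-m_T(\kappa_0 n)=\frac{45\varepsilon^{2}}{2(k_1^{2}+k_2^{2})^{2}}\Bigl[n^{2}(k_1^{2}+k_2^{2})-n^{4}-k_1^{2}k_2^{2}\Bigr]+\mathcal{O}(\varepsilon^{3}),
\]
whereupon the common $\varepsilon^{2}$ prefactor cancels in the ratio and produces the claimed formula.

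The main technical obstacle lies in the bookkeeping of this second limit: one must carry the expansion of $m_T(\xi)^{2}$ to order $\xi^{4}$ \emph{together with} the refinement $\kappa_0^{2}=\tfrac{45\varepsilon}{k_1^{2}+k_2^{2}}+\mathcal{O}(\varepsilon^{2})$ so that all $\mathcal{O}(\varepsilon^{3})$ remainders are genuinely negligible against the $\mathcal{O}(\varepsilon^{2})$ leading contribution. By comparison the $T\downarrow 0$ regime is softer thanks to the exponentially small error in $\tanh\xi-1$.
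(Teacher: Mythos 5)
Your proof is correct and matches the paper's (one-line) approach: insert the asymptotics for $c_0$ and $\kappa_0$ from Proposition \ref{prop: c0AndKappa0AreDeterminedByK1K2AndT} into the definition of $\ell$ and compute, verifying that the denominator coefficients are nonzero for $n_1\notin\{\pm k_1,\pm k_2\}$ (indeed $n^2(k_1^2+k_2^2)-n^4-k_1^2k_2^2=-(n^2-k_1^2)(n^2-k_2^2)$). One small quibble with the framing: the remark that ``one has to track terms one order beyond the leading one'' is really only needed for $T\uparrow\tfrac{1}{3}$, where $c_0$ and $m_T(\kappa_0 n)$ both tend to $1$; for $T\downarrow 0$ the $n$-dependent coefficients of the common $T^{1/4}$ factor already differ, and your computation there in fact (correctly) stops at leading order.
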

\begin{proof}
    This follows from inserting the asymptotic expressions for $\kappa_0$ and $c_0$ from \Cref{prop: c0AndKappa0AreDeterminedByK1K2AndT} into the definition of $\ell$.
\end{proof}
		\begin{proposition}\label{prop:intersection25}
			The wave number pair $(k_1,k_2)=(2,5)$ admits symmetry breaking.
		\end{proposition}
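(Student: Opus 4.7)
My plan follows the strategy outlined in the paragraph preceding the proposition: rather than tracking the implicit $T$-dependence of $c_0,\kappa_0$ through a full symbolic computation of $\phi(T;2,5)$, I evaluate the two normalized limits in \eqref{eq:phiLims} and show they are nonzero and of opposite signs. Since $(\ell(6))^{k_1+k_2-3}=(\ell(6))^{4}$ is strictly positive (the exponent is even), this is equivalent to $\phi(T_1;2,5)<0<\phi(T_2;2,5)$ for some $T_1$ near $0$ and $T_2$ near $\tfrac13$. Because $T\mapsto\phi(T;2,5)$ is analytic on $(0,\tfrac13)$, there must then be an isolated zero $T_0\in(0,\tfrac13)$ of odd order $2m+1$; writing $\phi(T)=(T-T_0)^{2m+1}\psi(T)$ with $\psi(T_0)\neq 0$ and differentiating shows that $\phi'$ has constant sign on a punctured neighborhood of $T_0$, so $\phi$ is strictly monotone there. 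This verifies the symmetry breaking criterion of \Cref{def: symmetryBreakingBifurcationPoints} (see also \Cref{rem:phiT1T2oppositesign}).

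To evaluate the two limits, I would unfold $\phi(T;2,5)=\widehat{u^2}_{(4,0),(0,2)}(c_0,\kappa_0,T)$ through the recursions \eqref{eq:urecursive2}--\eqref{eq:u2recursive}, starting from the four seed values $\hat{u}_{(1,0),(0,0)}=\hat{u}_{(0,0),(1,0)}=\hat{u}_{(0,1),(0,0)}=\hat{u}_{(0,0),(0,1)}=\tfrac12$ and building up all intermediate coefficients $\hat{u}_{\alpha,\beta}$ that can contribute, i.e.\ those with $|\alpha|+|\beta|\leq 6$. Each recursion step multiplies by one factor $\ell\bigl(2(\alpha_1-\beta_1)+5(\alpha_2-\beta_2)\bigr)$, and the coefficient symmetry $\hat{u}_{\alpha,\beta}=\hat{u}_{\beta,\alpha}$ roughly halves the bookkeeping. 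Normalising by $(\ell(6))^{4}$ turns every surviving term into a finite product of ratios $\ell(n)/\ell(6)$, whose limits as $T\downarrow 0$ and $T\uparrow\tfrac13$ are given by the closed-form expressions in \Cref{lemma:l-limits} with $(k_1,k_2)=(2,5)$ substituted. Summing the reduced terms and comparing signs completes the argument.

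The main obstacle is purely combinatorial: many multi-index splittings contribute, and one must carefully discard every term containing a factor $\ell(\pm 2)$ or $\ell(\pm 5)$ (these vanish by \eqref{eq: theDefinitionOfTheMultiplierEll}), while still tracking `diagonal' contributions involving $\ell(0)=(c_0-1)^{-1}$, which survive the normalisation and carry a definite sign since $c_0<1$ for $T\in(0,\tfrac13)$. The arithmetic itself is finite and mechanical, but it is sign-sensitive — a single slip could flip the conclusion — so each recursion step must be verified. Ultimately the proof reduces to checking the signs of two explicit rational numbers obtained from \Cref{lemma:l-limits} at $(k_1,k_2)=(2,5)$, a concrete and entirely checkable assertion.
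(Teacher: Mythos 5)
Your strategy matches the paper's exactly: unfold the recursion to express $\phi(T;2,5)=\widehat{u^2}_{(4,0),(0,2)}$ as a finite sum of $\ell$-products, normalize by $(\ell(6))^4$, evaluate the two endpoint limits via \Cref{lemma:l-limits}, and conclude via \Cref{rem:phiT1T2oppositesign}. (Your monotonicity-from-odd-order-zero argument is sound but redundant, since it is exactly what \Cref{rem:phiT1T2oppositesign} already records.)

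Two issues are worth flagging. First, your worry about ``diagonal'' $\ell(0)$ contributions is a misconception: no $\ell(0)$ factor ever arises. Every intermediate coefficient in the recursion for $\widehat{u^2}_{(k_2-1,0),(0,k_1)}$ is of the form $\hat{u}_{(a,0),(0,b)}$ with $0\leq a\leq k_2-1$, $0\leq b\leq k_1$, whose attached multiplier is $\ell(ak_1-bk_2)$. Since $\gcd(k_1,k_2)=1$, the identity $ak_1=bk_2$ forces $k_2\mid a$ and $k_1\mid b$, hence $a=b=0$, which is the excluded trivial index. This is not merely cosmetic: \Cref{lemma:l-limits} gives the limiting ratio $\ell(n_1)/\ell(n_2)$ only for $n_1\neq 0$ (the $T\downarrow 0$ formula is singular at $n_1=0$, and indeed $\ell(0)=(c_0-1)^{-1}\to -1$ stays bounded as $T\downarrow 0$ while each $\ell(n)$ with $n\geq 1$ diverges like $T^{-1/4}$), so if $\ell(0)$ \emph{did} survive your normalization the argument would need a genuinely separate treatment of those terms. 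Second, and more substantively, the proposal stops at a description of the procedure: you never produce the explicit (thirteen-term, in the paper's form) expression for $\phi(T;2,5)$, nor verify the two sign claims of \cref{eq:hatu2limits}. Since the proposition is precisely the assertion that this particular pair passes the sign test — and most pairs do not (cf.\ \Cref{prop:NoSymmetryBreaking}) — that sign-sensitive arithmetic is the actual content of the proof, and it is left unverified here.
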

		\begin{proof}
            Most of this proof is a lengthy, but elementary computation. For brevity we do not include the entire computation, but rather the idea and the result.
			The definition of $\phi$ in \cref{eq:phidef} shows that we need to compute $\widehat{u^2}_{(4,0),(0,2)}(c_0,\kappa_0,T)$. By \cref{eq:u2recursive} we find
            \begin{align*}
                \widehat{u^2}_{(4,0),(0,2)}&=2\hat{u}_{(3,0),(0,2)}\hat{u}_{(1,0),(0,0)}
                +2\hat{u}_{(2,0),(0,2)}\hat{u}_{(2,0),(0,0)}
                +2\hat{u}_{(1,0),(0,2)}\hat{u}_{(3,0),(0,0)}\\
                &\qquad+2\hat{u}_{(0,0),(0,2)}\hat{u}_{(4,0),(0,0)}
                +2\hat{u}_{(4,0),(0,1)}\hat{u}_{(0,0),(0,1)}
                +2\hat{u}_{(3,0),(0,1)}\hat{u}_{(1,0),(0,1)}\\
                &\qquad+\hat{u}_{(2,0),(0,1)}\hat{u}_{(2,0),(0,1)}.
            \end{align*}
            The terms in this expression can be determined recursively using \cref{eq:urecursive2} beginning with
            \[
                \hat{u}_{(1,0),(0,0)}=\hat{u}_{(0,0),(0,1)}=\frac{1}{2}.
            \]
            In the next step we can determine
            \begin{align*}
            \hat{u}_{(2,0),(0,0)}&=\ell(4)\hat{u}_{(1,0),(0,0)}\hat{u}_{(1,0),(0,0)}=\frac{\ell(4)}{2^2},\\
            \hat{u}_{(1,0),(0,1)}&=2\ell(-3)\hat{u}_{(1,0),(0,0)}\hat{u}_{(0,0),(0,1)}=\frac{2\ell(3)}{2^2},\\
            \intertext{and}
            \hat{u}_{(0,0),(0,2)}&=\ell(-10)\hat{u}_{(0,0),(0,1)}\hat{u}_{(0,0),(0,1)}=\frac{\ell(10)}{2^2}.
            \end{align*}
            Continuing in this manner we end up with the explicit expression
            \begin{align*}
             \phi(T;2,5)=\frac{1}{2^6}\big[&{40}{\ell(4)\ell(6)\ell(8)\ell(10)}
                +{10}{\ell(4)\ell(4)\ell(8)\ell(10)}+{20}
                {\ell(4)\ell(4)\ell(6)\ell(10)}\\
                &+{80}{\ell(3)\ell(4)\ell(6)\ell(8)}+{20}{\ell(3)\ell(4)\ell(4)\ell(8)}
                +{40}{\ell(3)\ell(3)\ell(4)\ell(6)}\\
                &+{40}{\ell(1)\ell(4)\ell(4)\ell(6)}
                +{80}{\ell(1)\ell(3)\ell(4)\ell(6)}
                +{40}{\ell(1)\ell(3)\ell(4)\ell(4)}\\
                &+{80}{\ell(1)\ell(3)\ell(3)\ell(4)}+{20}{\ell(1)\ell(1)\ell(4)\ell(4)}
                +{80}{\ell(1)\ell(1)\ell(3)\ell(4)}\\
                &+{80}{\ell(1)\ell(1)\ell(3)\ell(3)}\big].
            \end{align*}
By appealing to \Cref{lemma:l-limits} one gets
\begin{equation}\label{eq:hatu2limits}
    \begin{aligned}
        \lim_{T\downarrow 0}\frac{\phi(T;2,5)}{(\ell(6))^4}<0\quad \text{and}\quad
        \lim_{T\uparrow \frac{1}{3}}\frac{\phi(T;2,5)}{(\ell(6))^4}>0,
    \end{aligned}
\end{equation}
which proves the result.
		\end{proof}
  Naturally, this method of explicit calculation is unusable for large $k_1,k_2$, because the number of terms eventually become to many to compute. In fact, in general we have the following.
\begin{lemma}\label{lemma:phi_explicit}
    Let $1\leq k_1<k_2$ be coprime. Then there exist corresponding numbers $N$ and $M$ and a set of integers $\{k_{i,j}\}_{\substack{1\leq i\leq M\\
    1\leq j\leq N}}$ such that 
    \[
    1\leq k_{i,j}\leq k_1k_2,\qquad k_{i,j}\neq k_1,\qquad k_{i,j}\neq k_2,
    \]
    and 
    \[
    \widehat{u^2}_{(k_2-1,0),(0,k_1)}=\frac{1}{2^{k_1+k_2-1}}\sum_{j=1}^{N}\prod_{i=1}^{M}\ell(k_{i,j}).
    \]
\end{lemma}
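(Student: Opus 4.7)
The approach is a two-stage induction based on the recursion formulas \eqref{eq:urecursive2} and \eqref{eq:u2recursive}. First, I would establish by induction on $n=|\alpha|+|\beta|\geq 1$ that every coefficient in the Taylor--Fourier expansion can be written in the form
\[
\hat{u}_{\alpha,\beta}=\frac{1}{2^n}\sum_{j}\prod_{i=1}^{n-1}\ell(k_{i,j}),
\]
with the empty product understood as $1$ when $n=1$. The base case $n=1$ is immediate from the four seed values $\hat{u}_{(1,0),(0,0)}=\hat{u}_{(0,0),(1,0)}=\hat{u}_{(0,1),(0,0)}=\hat{u}_{(0,0),(0,1)}=\tfrac{1}{2}$. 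For the inductive step, plugging the hypothesis into \eqref{eq:urecursive2} gives for each summand two sub-coefficients contributing $2^{-a}$ and $2^{-(n-a)}$ respectively, together carrying $(a-1)+(n-a-1)=n-2$ factors of $\ell$, while the outer $\ell\big(k_1(\alpha_1-\beta_1)+k_2(\alpha_2-\beta_2)\big)$ supplies the missing $(n-1)$st factor; the edge cases $a\in\{1,n-1\}$ are handled by the empty-product convention.

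Second, I would argue that every $\ell$-argument actually appearing when this expansion is carried out for the target coefficient $\widehat{u^2}_{(k_2-1,0),(0,k_1)}$ lies in $\{1,\dots,k_1k_2\}\setminus\{k_1,k_2\}$. Any sub-multi-indices $(\alpha',\beta')$ occurring as ancestors in the recursion must satisfy $\alpha'\leq(k_2-1,0)$ and $\beta'\leq(0,k_1)$ componentwise (partial sums of a partition cannot exceed the total), so the corresponding argument $k_1\alpha_1'-k_2\beta_2'$ lies in $[-k_1k_2,\,k_1(k_2-1)]$ and is thus bounded in absolute value by $k_1k_2$. Since $\ell$ is an even function by \eqref{eq: theDefinitionOfTheMultiplierEll}, one may replace negative arguments by their absolute values and pick $k_{i,j}\in\{1,\ldots,k_1k_2\}$. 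Any branch of the recursion whose argument lands on $\pm k_1$ or $\pm k_2$ contributes nothing (since $\ell$ vanishes there) and may simply be dropped; the only way to produce argument zero is $k_1\alpha_1'=k_2\beta_2'$, which by $\gcd(k_1,k_2)=1$ together with the bounds forces $\alpha_1'=\beta_2'=0$, corresponding to the excluded base point $(\alpha',\beta')=(0,0)$.

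Finally, inserting this description into \eqref{eq:u2recursive} applied to $(\alpha,\beta)=\big((k_2-1,0),(0,k_1)\big)$ rewrites the coefficient as a finite sum of products $\hat{u}_{\alpha',\beta'}\hat{u}_{\alpha'',\beta''}$ with $|\alpha'|+|\beta'|+|\alpha''|+|\beta''|=k_1+k_2-1$. Each such product produces the common prefactor $2^{-(k_1+k_2-1)}$ and exactly $M\coloneqq k_1+k_2-3$ factors $\ell(\cdot)$ whose arguments obey the above constraints, yielding the claimed formula upon letting $N$ be the total number of product-terms generated. The only genuinely delicate point is bookkeeping: uniformly tracking the power of $2$ and the count of $\ell$-factors across branches where one partition half degenerates to a single seed and no new $\ell$ is attached, but this is purely combinatorial and presents no analytic difficulty.
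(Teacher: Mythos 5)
Your proposal is correct and follows essentially the same route as the paper: an induction on $|\alpha|+|\beta|$ using the recursions \eqref{eq:urecursive2} and \eqref{eq:u2recursive} to track the power of $2$ and the count of $\ell$-factors, followed by the observation that the componentwise constraints forced by the target multi-index $\big((k_2-1,0),(0,k_1)\big)$ together with coprimality confine every $\ell$-argument to $\{1,\dots,k_1k_2\}\setminus\{k_1,k_2\}$. The only stylistic difference is that the paper shows the arguments $\pm k_1,\pm k_2$ never arise structurally (from the bounds $m\le k_2-1$, $n\le k_1$, $2\le m+n\le k_1+k_2-2$), whereas you equivalently observe that any such branch may be dropped because $\ell$ vanishes there; both justifications are valid.
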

\begin{proof}
We begin by proving that for any positive integers $a$, $b$ there exists a corresponding $A\geq 1$, $k_{i,j}$ and $m$, $n$ depending on $i,j$ such that
\begin{equation}\label{eq:kijcondition}
k_{i,j}=\vert mk_1-nk_2\vert, \qquad m\leq a,\, n\leq b\text{ and } 2\leq m+n,
\end{equation}
and
\begin{equation}\label{eq:explicit_u_coeff}
\hat{u}_{(a,0),(0,b)}=\frac{1}{2^{a+b}}\sum_{j=1}^A\prod_{i=1}^{a+b-1}\ell(k_{i,j}).
\end{equation}
Here we use the standard convention that an empty product is equal to $1$, then it is clearly true that \cref{eq:explicit_u_coeff} holds for $a$ and $b$ such that $a+b=1$. We proceed by induction. Assume \cref{eq:explicit_u_coeff} holds for all $a+b\leq d$, then for $a+b=d+1$, by the recursion formula in \cref{eq:urecursive2} we obtain
\begin{align*}
\hat{u}_{(a,0),(0,b)}&=\sum_{\substack{a'+a''=a\\b'+b''=b}}\ell(ak_1-bk_2)
\left(
\frac{1}{2^{a'+b'}}\sum_{j'=1}^{A'}\prod_{i'=1}^{a'+b'-1}\ell(k'_{i',j'})
\right)
\left(
\frac{1}{2^{a''+b''}}\sum_{j''=1}^{A''}\prod_{i''=1}^{a''+b''-1}\ell(k''_{i'',j''})
\right)\\
&=\sum_{\substack{a'+a''=a\\b'+b''=b}}\frac{\ell(ak_1-bk_2)}{2^{a'+a''+b'+b''}}\sum_{j=1}^{A'A''}\left(\prod_{i=1}^{a'+a''+b'+b''-2}\ell(k_{i,j})\right)\\
&=\frac{1}{2^{a+b}}\sum_{j=1}^A\prod_{i=1}^{a+b-1}\ell(k_{i,j}),
\end{align*}
where all $k_{i,j}=k_{i',j'}'$, $k_{i,j}=k_{i'',j''}$ or $k_{i,j}=\vert ak_1-bk_2\vert $ for some an appropriate choice of indexing. in particular this means that \cref{eq:kijcondition} holds for all $k_{i,j}$. Now, since $\phi(T;k_1,k_2)=\widehat{u^2}_{(k_2-1,0),(0,k_1)}$ we can use the formula in \cref{eq:u2recursive} and the same computation as above to obtain
\[
\phi(T;k_1,k_2)=\frac{1}{2^{k_1+k_2-1}}\sum_{j=1}^{N}\prod_{i=1}^{M}\ell(k_{i,j}).
\]
Moreover, all $k_{i,j}=\vert mk_1-nk_2\vert$ for some $m\leq k_2-1$ and $n\leq k_1$ and $2\leq m+n\leq k_1+k_2-2$. Combined with the fact that $k_1$ and $k_2$ are coprime we obtain
\[
    1\leq k_{i,j}\leq k_1k_2,\qquad k_{i,j}\neq k_1,\qquad k_{i,j}\neq k_2,
\]
which finishes the proof.
\end{proof}
\begin{remark}\label{rem: ExactExpressionForNAndM}
From the proof of the lemma it is clear that we in fact have
\[
M=k_1+k_2-3.
\]
It is also possible to compute the value of $N$, which is given by
\[
    N=\frac{(2k_2+2k_1-4)!}{(k_1+k_2-2)!k_1!(k_2-1)!}.
\]
(As this expression is not needed for the analysis, we omit its cumbersome proof.) This, combined with Sterling's formula, implies $N\simeq 64^k/k^2$ when $k_1\approx k_2\eqqcolon k\to \infty$. Thus, it quickly becomes unfeasible to compute $\phi$ manually as $k_1,k_2$ grows.
\end{remark}
    \begin{figure}[H]
                \includegraphics[scale=1.0]{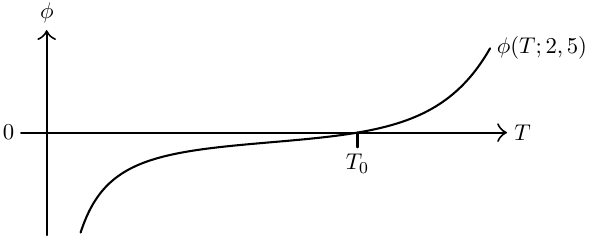}
                \caption{Plot of the function $T\mapsto\phi(T;2,5)$ on the interval $(0,1/3)$. Here $\phi(T_0;2,5)=0$ for a value $T_0\approx 0.1215$.}
                \label{fig:25Tplot}
        \end{figure}
\noindent While we lack the means to determine whether $T\mapsto\phi(T;k_1,k_2)$ crosses zero for a general wave number pair $(k_1, k_2)$, there are two groups of such pairs where symmetry breaking can be readily ruled out by showing there exists no $T_0\in(0,1/3)$ such that $\phi(T_0;k_1,k_2)=0$. This is important because the existence of such a $T_0$ is necessary for the existence of asymmetric waves.
        \begin{proposition}\label{prop:phizeronecessary}
        If $\phi(T;k_1,k_2)\neq 0$ for all $T\in (0,1/3)$ then there exists no asymmetric solutions to \cref{eq: scaledSteadyCapillaryWhithamEquation} with arbitrarily small amplitude.
        \end{proposition}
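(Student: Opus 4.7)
The plan is to prove the contrapositive: if an asymmetric solution of \cref{eq: scaledSteadyCapillaryWhithamEquation} with leading wave-number pair $(k_1,k_2)$ exists at arbitrarily small amplitude, then $\phi(T_*;k_1,k_2)=0$ for some $T_*\in(0,\tfrac{1}{3})$. Concretely, I would take a sequence $(u_n,c_n,\kappa_n,T_n)$ of asymmetric solutions with $\|u_n\|_{L^\infty(\T)}\to 0$ and, after extracting a subsequence, obtain a limit $(c_*,\kappa_*,T_*)$ of the parameters. Using the Fredholm property in \Cref{prop: theLinearizationIsFredholm} together with the analytic implicit function theorem, the limit must be a bifurcation point: at any non-bifurcation parameter the linearization $M_{T_*,\kappa_*}-c_*$ is invertible, so the only small solution is the trivial one, contradicting $u_n\to 0$ with $u_n\not\equiv 0$.

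Next, I would argue that this accumulation point is necessarily a \emph{double} bifurcation point with $T_*\in(0,\tfrac{1}{3})$. By \Cref{cor: simpleBifurcationPointsGiveRiseToOnlySymmetricSolutions}, simple bifurcation points produce only symmetric small solutions, and by \Cref{cor: allBifurcationPointsAreSimpleInTheStrongSurfaceTensionRegime} every bifurcation point with $T\in\{0\}\cup[\tfrac{1}{3},\infty)$ is simple; therefore asymmetric waves cannot accumulate at such points. Matching the Fourier content of $u_n\to 0$ with the kernel of $M_{T_*,\kappa_*}-c_*$ identifies the associated wave-number pair as precisely $(k_1,k_2)$, and \Cref{prop: c0AndKappa0AreDeterminedByK1K2AndT} then pins down $(c_*,\kappa_*)=(c_0(T_*),\kappa_0(T_*))$.

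Finally, I would push each $u_n$ through the Lyapunov--Schmidt machinery of \Cref{sec: LocalBifurcationTheoryToSymmetryBreakingPoints}: write $u_n=v(r_n,\theta_n)+w(v(r_n,\theta_n),c_n,\kappa_n,T_n)$ via \Cref{prop: infdimAKAuniqueWSolution}. Asymmetry of $u_n$ implies, by \Cref{proposition: criterionForAsymmetryOfV,prop: UIsSymmetricIfAndOnlyIfVIs}, that $r_{1,n}r_{2,n}\neq 0$ and $\theta_{1,n}-\theta_{2,n}\notin\frac{\pi}{k_1k_2}\Z$, so the factorization in \Cref{proposition:lastfineq} allows one to cancel the nonzero prefactor in $\langle I,v_{\sin}^1\rangle=0$ and conclude $\Psi_3(r_n,\theta_n,c_n,\kappa_n,T_n)=0$. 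Letting $n\to\infty$ and invoking \cref{eq:Psi3rexp} yields $0=\phi(T_*;k_1,k_2)$, contradicting the hypothesis. The main delicate point is the accumulation step, namely excluding a degenerate boundary limit $T_*\in\{0,\tfrac{1}{3}\}$ (where $\kappa_0,c_0$ diverge by \Cref{prop: c0AndKappa0AreDeterminedByK1K2AndT}); this case is absorbed into the simple-bifurcation argument above, since any such boundary accumulation would place the parameters arbitrarily close to simple bifurcation points, contradicting the local symmetry forced by \Cref{cor: simpleBifurcationPointsGiveRiseToOnlySymmetricSolutions}.
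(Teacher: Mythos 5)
Your strategy---argue by contradiction along a sequence of asymmetric solutions with amplitude tending to zero, locate its parameter limit at a double bifurcation point with $T_*\in(0,\tfrac13)$, run the Lyapunov--Schmidt reduction on each $u_n$, cancel the nonzero asymmetry prefactor via \Cref{proposition:lastfineq}, and pass to the limit in $\Psi_3=0$ using \cref{eq:Psi3rexp} to force $\phi(T_*;k_1,k_2)=0$---is structurally different from the paper's proof, which is direct rather than sequential: since $\vert\phi(\cdot;k_1,k_2)\vert\to\infty$ at both ends of $(0,\tfrac13)$, the hypothesis $\phi\neq0$ upgrades to a lower bound $\vert\phi\vert>\epsilon$ \emph{uniform in $T$}, hence $\vert\Phi(r,\theta,T)\vert>\epsilon/2$ for all $\vert r\vert<\delta$ and all $T\in(0,\tfrac13)$, and then the factorization together with \Cref{proposition: criterionForAsymmetryOfV,prop: UIsSymmetricIfAndOnlyIfVIs} shows the sine equation cannot hold for asymmetric $v$. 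Your interior analysis (the use of \Cref{prop: infdimAKAuniqueWSolution}, the factorization, and the limit via \cref{eq:Psi3rexp}) is the same mechanism and is fine.

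The genuine gap is exactly the point you flag and then dismiss: the compactness step. The parameter region is not compact, and along your sequence you may have $T_n\downarrow0$ or $T_n\uparrow\tfrac13$; by \Cref{prop: c0AndKappa0AreDeterminedByK1K2AndT} the associated double bifurcation points then satisfy $\kappa_0\to\infty$ (respectively $\kappa_0\to0$, $c_0\to1$), so the quadruples $(0,c_n,\kappa_n,T_n)$ do \emph{not} accumulate at any bifurcation point of \cref{eq: scaledSteadyCapillaryWhithamEquation}, let alone approach simple ones. \Cref{cor: simpleBifurcationPointsGiveRiseToOnlySymmetricSolutions} is a local statement: it guarantees symmetry only in a neighbourhood whose size depends on the particular simple bifurcation point, and no uniform neighbourhood is available along a sequence escaping to the boundary of parameter space (note also that the a priori bound of \Cref{prop: solutionsAreSmooth} degenerates as $\kappa^{-1}$ or $\kappa$ blows up). So the boundary case cannot be ``absorbed into the simple-bifurcation argument'' as stated; some quantitative, uniform-in-$T$ input is required. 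That input is precisely what the paper's proof supplies: the divergence of $\phi$ at $T\in\{0,\tfrac13\}$ yields a uniform lower bound for $\Phi$ on a ball $\vert r\vert<\delta$ independent of $T$, which rules out asymmetric solutions of small amplitude for \emph{every} $T$ simultaneously. To repair your argument you would either have to prove such a uniform bound anyway (making the sequential contradiction redundant) or give a separate argument excluding asymmetric solutions whose parameters drift to the ends of the double-bifurcation curve.
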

        \begin{proof}
        Assume $\phi(T;k_1,k_2)> 0$ for $T\in (0,1/3)$ (otherwise replace $\phi$ with $-\phi$ in the proof), then there exists some constant $\epsilon>0$ such that $\phi(T;k_1,k_2)> \epsilon$. By continuity there exist some $\delta$ such that $\Phi(r,\theta,T)>\epsilon/2$ on $T\in(0,1/3)$ for all $\vert r\vert <\delta$. This is uniform in $T$ even though $(0,1/3)$ is open because $\phi(T;k_1,k_2)\to \infty$ as $T\to 0$ or $T\to 1/3$. Since $\langle I, v_{\sin}^1\rangle=r_1^{k_1-1}r_2^{k_1}\sin(k_1k_2(\theta_1-\theta_2))\Phi$ (for $c$, $\kappa$ such that $\langle I, v_{\cos}^1\rangle=\langle I, v_{\cos}^2\rangle=0)$, we get $\langle I, v_{\sin}^1\rangle\neq 0$ for $r_1^{k_1-1}r_2^{k_1}\sin(k_1k_2(\theta_1-\theta_2))\neq 0$ which is necessary for an asymmetric $v$.
        \end{proof}
        Now we can show two special cases when $\phi(T;k_1,k_2)$ is nonzero.
		\begin{proposition}\label{prop:NoSymmetryBreaking}
			If $ k_1=1 $ or $k_2- k_1=1 $, then the pair $ (k_1,k_2) $ cannot admit symmetry breaking in accordance with Definition \ref{def: symmetryBreakingBifurcationPoints}. In fact, $\phi(T;1,k_2)\neq 0$ and $\phi(T;k_2-1,k_2)\neq 0$ for all $T\in(0,1/3)$.
		\end{proposition}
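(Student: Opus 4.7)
The plan is to leverage the explicit formula from \Cref{lemma:phi_explicit}, which writes $\phi(T;k_1,k_2)$ as $2^{-(k_1+k_2-1)}\sum_j\prod_i\ell(k_{i,j})$, and show in both cases that every term in the sum shares the same sign, so the whole expression is bounded away from zero on $(0,1/3)$.

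First I would pin down the sign of $\ell(k)$ at the double bifurcation point. By \Cref{prop: c0AndKappa0AreDeterminedByK1K2AndT}, $\kappa_0k_1<\xi_T<\kappa_0k_2$, and by \Cref{lemma:mTproperties}, $m_T$ is strictly decreasing on $(0,\xi_T)$ and strictly increasing on $(\xi_T,\infty)$ with $m_T(\kappa_0k_1)=m_T(\kappa_0k_2)=c_0$. Hence for any positive integer $k\notin\{k_1,k_2\}$,
\begin{equation*}
\ell(k)>0\iff k_1<k<k_2,\qquad \ell(k)<0\iff k<k_1\text{ or } k>k_2.
\end{equation*}

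For Case 1 ($k_1=1$), \Cref{lemma:phi_explicit} tells us each $k_{i,j}=|m-nk_2|$ with $m\leq k_2-1$, $n\leq 1$, $m+n\geq 2$, together with $k_{i,j}\neq 1$ and $k_{i,j}\leq k_1k_2=k_2$ and $k_{i,j}\neq k_2$. A short case split on $n\in\{0,1\}$ forces $k_{i,j}\in\{2,\dots,k_2-1\}$, so $\ell(k_{i,j})>0$ throughout. Every product is therefore strictly positive, and (provided $N\geq 1$, which holds because the coefficient $\widehat{u^2}_{(k_2-1,0),(0,1)}$ is obtained by the recursion from nonzero leading terms) we conclude $\phi(T;1,k_2)>0$. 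For Case 2 ($k_2-k_1=1$), the open interval $(k_1,k_2)$ contains no integer, so $k_{i,j}\leq k_1-1$ or $k_{i,j}\geq k_2+1$, giving $\ell(k_{i,j})<0$ for every factor. Each product has $M=k_1+k_2-3=2k_1-2$ factors, an even number, so each product is again strictly positive, whence $\phi(T;k_1,k_1+1)>0$.

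The proof is essentially a parity/sign bookkeeping argument; the main work is the case analysis showing $k_{i,j}\in\{2,\dots,k_2-1\}$ in Case 1, which is the only place the coprimality of $(k_1,k_2)$ and the upper bound $k_{i,j}\leq k_1k_2$ from \Cref{lemma:phi_explicit} both enter nontrivially. The sole subtlety is verifying $N\geq 1$, i.e.\ that the sum is nonempty, so that "all terms positive" upgrades to "the sum is positive"; this follows from the recursion in \Cref{lemma: Taylor-FourierExpansion} since $\hat u_{(1,0),(0,0)}=\hat u_{(0,0),(0,1)}=1/2\neq 0$ and each step of the recursion producing $\hat u_{(k_2-1,0),(0,k_1)}$ multiplies by the nonzero quantity $\ell$. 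The "in fact" sharpening ($\phi\neq0$ on all of $(0,1/3)$) is automatic since the sign argument is uniform in $T\in(0,1/3)$.
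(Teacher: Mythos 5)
Your proof is correct and takes essentially the same route as the paper: invoke \Cref{lemma:phi_explicit}, observe that for $k_1=1$ every $k_{i,j}$ lies strictly between $k_1$ and $k_2$ so $\ell(k_{i,j})>0$, while for $k_2-k_1=1$ every $k_{i,j}$ lies outside $[k_1,k_2]$ so $\ell(k_{i,j})<0$, and conclude that all products in the sum share one sign, whence $\phi\neq 0$ on $(0,1/3)$. The extra details you supply (the sign characterization of $\ell$ via \Cref{prop: c0AndKappa0AreDeterminedByK1K2AndT} and \Cref{lemma:mTproperties}, the parity of $M=k_1+k_2-3$, and the nonemptiness $N\geq 1$) are correct refinements of steps the paper leaves implicit, which instead concludes with $(-1)^M\phi>0$ without computing the parity and handles the definition of $\phi$ for $k_1=1$ via \Cref{rem:phik1def}.
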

		\begin{proof}
			From \Cref{def:phidef} and \Cref{lemma:phi_explicit} (for $k_1=1$ see \Cref{rem:phik1def} as well) we get that $\phi(T;1,k_2)$ is of the form
            \[
               \phi(T;1,k_2)=\frac{1}{2^{k_2}}\sum_{j=1}^N \prod_{i=1}^{M}\ell(k_{i,j})
            \]
            where $1<k_{i,j}<k_2$. In particular, for all such $k_{i,j}$ and $T\in (0,1/3)$ we have $\ell(k_{i,j})>0$, thus $\phi(T;1,k_2)> 0$ for all $T\in(0,1/3)$.

            On the other hand, $\phi(T;k_2-1,k_2)$ is of the form
            \[
            \phi(T;k_2-1,k_2)=\frac{1}{2^{2k_2-2}}\sum_{j=1}^N \prod_{i=1}^{M}\ell(k_{i,j})
            \]
            where $k_{i,j}<k_2-1=k_1$ or $k_2<k_{i,j}$. For all such $k_{i,j}$ and $T\in (0,1/3)$ we have $\ell(k_{i,j})<0$. Thus, it follows that $(-1)^M\phi(T;k_2-1,k_2)> 0$ for all $T\in(0,1/3)$.
		\end{proof}
		By the two previous propositions, there are no small asymmetrical solutions near double bifurcation points where either $k_1=1$ or $k_2-k_1=1$. The scaling argument from Remark \ref{rem: oneMayAssumeThatK1AndK2AreRelativePrimes} ensures that this result carries over to general (not necessary coprime) integer pairs $1\leq k_1<k_2$; the corresponding conditions become ``$k_1$ divides $k_2$'' and ``$k_2-k_1$ divides $k_1$'' and so we have:
  \begin{corollary}\label{corollary:nores}
			There are no arbitrarily small solutions to \cref{eq: scaledSteadyCapillaryWhithamEquation} such that $ u $ is asymmetric for the resonant case $ k_1 \vert k_2 $, nor are there such solutions for the case $(k_2-k_1)\vert k_1 $.
		\end{corollary}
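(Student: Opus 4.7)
My plan is to reduce the corollary to the coprime case handled by \Cref{prop:NoSymmetryBreaking}, combined with the non-existence criterion of \Cref{prop:phizeronecessary}. The scaling observation in \Cref{rem: oneMayAssumeThatK1AndK2AreRelativePrimes} is the bridge: for integers $1\leq k_1<k_2$ with $d=\gcd(k_1,k_2)$, any solution of \cref{eq: scaledSteadyCapillaryWhithamEquation} near the double bifurcation point with wave numbers $(k_1,k_2)$ corresponds, via the substitution that sends $(\kappa,k_1,k_2)$ to $(d\kappa,k_1/d,k_2/d)$, to a solution near the double bifurcation point with coprime wave numbers $(\tilde{k}_1,\tilde{k}_2)\coloneqq(k_1/d,k_2/d)$. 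Crucially, this substitution preserves $u$ as a function on the real line (the two triplets yield the same unscaled solution of \cref{eq: steadyWhitham}) and hence preserves asymmetry in the sense of \Cref{def:asymmetric}. So it suffices to rule out small asymmetric waves at coprime bifurcation points.

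In the first case, $k_1\mid k_2$ immediately gives $d=k_1$, hence $\tilde{k}_1=1$. By \Cref{prop:NoSymmetryBreaking}, $\phi(T;1,\tilde{k}_2)\neq 0$ for every $T\in(0,\tfrac{1}{3})$, and \Cref{prop:phizeronecessary} then precludes arbitrarily small asymmetric solutions.

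For the second case, $(k_2-k_1)\mid k_1$, I would use the identity $k_2-k_1=d(\tilde{k}_2-\tilde{k}_1)$ and $k_1=d\tilde{k}_1$ to see the divisibility reduces to $(\tilde{k}_2-\tilde{k}_1)\mid \tilde{k}_1$. A short number-theoretic observation closes the gap: any common divisor of $\tilde{k}_1$ and $\tilde{k}_2-\tilde{k}_1$ also divides $\tilde{k}_2$, so coprimality of $(\tilde{k}_1,\tilde{k}_2)$ forces $\gcd(\tilde{k}_1,\tilde{k}_2-\tilde{k}_1)=1$. Combined with $(\tilde{k}_2-\tilde{k}_1)\mid \tilde{k}_1$, this yields $\tilde{k}_2-\tilde{k}_1=1$, and \Cref{prop:NoSymmetryBreaking} again gives $\phi(T;\tilde{k}_1,\tilde{k}_2)\neq 0$ throughout $(0,\tfrac{1}{3})$; \Cref{prop:phizeronecessary} finishes the argument.

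I do not anticipate a real obstacle here, since the essential analytic content is already contained in \Cref{prop:NoSymmetryBreaking,prop:phizeronecessary}; the only mildly delicate point is articulating clearly that the scaling of \Cref{rem: oneMayAssumeThatK1AndK2AreRelativePrimes} genuinely identifies asymmetric solutions across the two bifurcation points (it suffices that the substitution is a bijection on the underlying unscaled function $u$). Once that is in place the corollary is a short deduction in two cases.
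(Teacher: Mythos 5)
Your proposal is correct and follows essentially the same route as the paper: the paper states the corollary as an immediate consequence of \Cref{prop:phizeronecessary} and \Cref{prop:NoSymmetryBreaking} together with the scaling argument of \Cref{rem: oneMayAssumeThatK1AndK2AreRelativePrimes}, and you have simply written out the elementary divisibility bookkeeping (that $k_1\mid k_2$ forces $\tilde k_1=1$ and that $(k_2-k_1)\mid k_1$ forces $\tilde k_2-\tilde k_1=1$ after dividing by $d=\gcd(k_1,k_2)$) which the paper leaves implicit.
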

        \begin{remark} \Cref{prop:findimpart1} can be extended to include $k_1=1$ to give solutions to $(\Psi_1,\Psi_2)=0$ for all $r$ in a slit disc, as was done in \cite{Ehrnstroem2019}. However, \Cref{corollary:nores} shows that in this case there exists no small asymmetric solutions, which is why we omit that case in this paper.
        \end{remark}
		\subsection{Numerical results}
        It is possible to numerically compute more pairs than $(k_1,k_2)=(2,5)$ that admits symmetry breaking using the same scheme as in \Cref{prop:intersection25}. The results are presented in \Cref{fig:k1k2pairs}. Of course, this scheme yields no conclusion if the limits in \cref{eq:phiLims} have the same sign; in this case it is not clear whether $\phi$ crosses zero or not. In particular, there may be several such symmetry breaking pairs $(k_1,k_2)$ missing from Figure \ref{fig:k1k2pairs}. However, from \cref{prop:NoSymmetryBreaking} we have the inequalities 
        \[
        \frac{k_2}{k_2-1}<\frac{k_2}{k_1}<k_2,
        \]
        but the results in the figure indicate these inequalities are far from sharp for large $k_2$. For example, $2<\frac{k_2}{k_1}$ for all the plotted pairs $(k_1,k_2)$, while clearly $\frac{k_2}{k_2-1}\approx 1$ for large $k_2$.
        
        We can also solve for $(c_0,\kappa_0)$ numerically and plot the $\phi(T;k_1,k_2)$ for a given pair $(k_1,k_2)$. This is done for $\phi(T;2,5)$ in \Cref{fig:25Tplot}. Doing this allows us not only to see that indeed $(2,5)$ admits symmetry breaking, but that it should be nondegenerate, something we expect to be true in general for wave pairs that admit symmetry breaking. We can also find an approximate value for $T_0$ such that $\phi(T_0;2,5)=0$.
        
    \begin{figure}[h!]
                \includegraphics[scale=0.9]{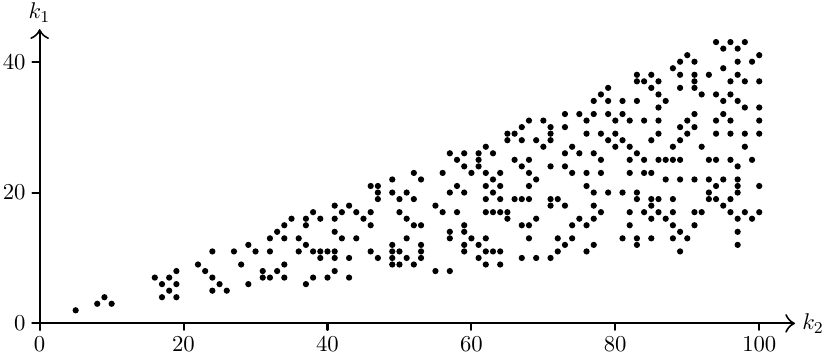}
                \caption{The dots mark coprime pairs $1\leq k_1<k_2$ for which we obtained different signs in the limits \cref{eq:phiLims} by a numerical computation. Thus all plotted wave number pairs admit symmetry breaking.}
                \label{fig:k1k2pairs}
            \end{figure}
\subsection{Other nonlinearities} Without going into the technical details we conclude with mentioning that the method of finding asymmetric waves presented in this paper generalizes quite readily to a larger class of analytic nonlinearities. If we replace \cref{eq: scaledSteadyCapillaryWhithamEquation} with
\[
(M_{T,\kappa}-c)u+\mathcal{N}(u)=0,
\]
where
\begin{equation}\label{eq:newnonlin}
\mathcal{N}(u)=\sum_{n=2}^{\infty} a_n u^n,\qquad a_2\neq 0,
\end{equation}
then solving the corresponding $\Psi_1=\Psi_2=0$ can be done in precisely the same manner because this only relies on the linear part of the equation. Solving $\Psi_3=0$ is more involved than with a square nonlinearity, but it is fairly straightforward to find that
\[
\phi^{\mathcal{N}}(T;k_1,k_2)=a_2^{k_1+k_2-2}\phi(T;k_1,k_2)+R,
\]
if we let $\phi^{\mathcal{N}}(T;k_1,k_2)$ be the $\phi$-function for the nonlinearity $\mathcal{N}$ and keep $\phi(T;k_1,k_2)$ as the $\phi$-function for the square nonlinearity in \cref{eq: scaledSteadyCapillaryWhithamEquation}. Moreover, $R$ is of the form
\[
R=\sum_{i=1}^{N_R} C_i\prod_{j=1}^{M_i}\ell(k_{i,j}^R),
\]
where $M_i<M=k_1+k_2-3$. It follows that
\begin{align*}
\lim_{T\downarrow 0}\frac{\phi^{\mathcal{N}}(T;k_1,k_2)}{(\ell(k_2+1))^{k_2+k_1-3}}=a_2^{k_1+k_2-2}\lim_{T\downarrow 0}\frac{\phi(T;k_1,k_2)}{(\ell(k_2+1))^{k_2+k_1-3}},\\
\lim_{T\uparrow \frac{1}{3}}\frac{\phi^{\mathcal{N}}(T;k_1,k_2)}{(\ell(k_2+1))^{k_2+k_1-3}}=a_2^{k_1+k_2-2}\lim_{T\uparrow \frac{1}{3}}\frac{\phi(T;k_1,k_2)}{(\ell(k_2+1))^{k_2+k_1-3}}.
\end{align*}
Whence our proof that $(k_1,k_2)=(2,5)$ admits symmetry breaking for a quadratic nonlinearity carries over to a nonlinearity of the form given in \cref{eq:newnonlin}.

\section*{Acknowledgements}
O. Mæhlen was supported by the Research
Council of Norway project INICE, project no. 301538. D. S. Seth was supported by an ERCIM `Alain Bensoussan’ Fellowship Programme, the Research Council of Norway under Grant Agreement No. 325114, and by the Swedish Research Council under grant no. 2021-06594 while in residence at Institut Mittag-Leffler in Djursholm, Sweden during the fall semester of 2023. The Authors would also like to thank Erik Wahlén for a suggestion that significantly simplified the proof of \Cref{corollary: JOfUIsOrthogonalToVPrime}.
\bibliographystyle{siam}
\bibliography{RefsAsymmetric.bib}
\,\\
\end{document}